\newtheorem{Theorem}{Theorem}[section]
\newtheorem{Lemma}[Theorem]{Lemma}
\newtheorem{Proposition}[Theorem]{Proposition}
\theoremstyle{definition}
\newtheorem{Remark}[Theorem]{Remark}
\renewcommand{\leq}{\leqslant}
\renewcommand{\geq}{\geqslant}
\def\C{\mathbb C}
\def\N{\mathbb N}
\def\R{\mathbb R}
\def\Z{\mathbb Z}
\newcommand{\CF}{\mathcal{F}}
\newcommand{\CH}{\mathcal{H}}
\newcommand{\CJ}{\mathcal{J}}
\newcommand{\CS}{\mathcal{S}}
\newcommand{\CW}{\mathcal{W}}
\def\be{\begin{equation}}
\def\ee{\end{equation}}
\def\bt{\begin{Theorem}}
\def\et{\end{Theorem}}
\def\bi{\begin{itemize}}
\def\ei{\end{itemize}}
\def\bea{\begin{eqnarray}}
\def\eea{\end{eqnarray}}
\def\beast{\begin{eqnarray*}}
\def\eeast{\end{eqnarray*}}
\def\ben{\begin{enumerate}}
\def\een{\end{enumerate}}
\def\bi{\bibitem}
\newcommand{\norm}[1]{\left\Vert#1\right\Vert}
\newcommand{\half}{{\frac{1}{2}}}
\renewcommand{\MR}[1]{} 
\DeclareMathOperator{\dom}{\mathfrak{D}}
\DeclareMathOperator{\fhalf}{\frac{1}{2} }
\newcommand{\abs}[1]{\left\vert#1\right\vert}
\begin{document}
\title[\lowercase{\textit{q}}-deformed  Araki-Woods von Neumann algebras]{On the factoriality of $q$-deformed  Araki-Woods von Neumann algebras}

\author[Bikram]{Panchugopal Bikram}
\author[Mukherjee]{Kunal Mukherjee}
\author[Ricard]{\'{E}ric Ricard}
\author[Wang]{Simeng Wang}

\address{School of Mathematical Sciences,
National Institute of Science Education and Research,  Bhubaneswar, An OCC of Homi Bhabha National Institute,  Jatni- 752050, India}
\address{Department of Mathematics, IIT Madras, Chennai - 600036, India.}
\address{Normandie Univ, UNICAEN, CNRS, Laboratoire de Math{\'e}matiques Nicolas Oresme, 14000 Caen, France}
\address{Institute for Advanced Study in Mathematics, Harbin Institute of Technology, Harbin, 150001, China}
\email{bikram@niser.ac.in, kunal@iitm.ac.in,} \email{eric.ricard@unicaen.fr, simeng.wang@hit.edu.cn}

\keywords{ $q$-commutation relations, von Neumann algebras}
\subjclass[2010]{Primary  46L10; Secondary 46L65, 46L55.}

\begin{abstract} 
The $q$-deformed Araki-Woods von Neumann algebras $\Gamma_q(\CH_\R,U_t)^{\prime\prime}$ are factors for all $q\in (-1,1)$ whenever $dim(\CH_\R)\geq 3$. When $dim(\CH_\R)=2$ they are factors as well for all $q$ so long as the parameter defining $(U_t)$ is `small' or $1$ $($trivial$)$ as the case may be. 
\end{abstract} 
\maketitle

\section{Introduction}

To any strongly continuous orthogonal representation $(U_t)$ of $\R$ on a real Hilbert space $\CH_\R$ with $dim(\CH_\R)\geq 2$, Hiai in \cite{Hiai} constructed the $q$-deformed Araki-Woods von Neumann algebras $($hereafter abbreviated as $q$-Araki-Woods algebras$)$ for $-1< q< 1$. These are $W^{\ast}$-algebras usually arising from non-tracial representations of the $q$-commutation relations (a Yang-Baxter deformation of the canonical commutation relations), thereby yielding an interpolation between the Bosonic and Fermionic statistics. Hiai's functor is a fusion of Shlyakhtenko's free CAR functor \cite{Shlyakhtenko} $($associated $W^*$-algebras are called free Araki-Woods factors$)$ and the $q$-Gaussian functor of Bo$\overset{.}{\text{z}}$ejko-Speicher $($associated $W^{*}$-algebras are called Bo$\overset{.}{\text{z}}$ejko-Speicher factors$)$ $($see \cite{BS}$)$. All of these constructions are generalizations of  Voiculescu's $C^*$-free Gaussian functor, which is the central object of study in free probability  \cite{DVN}. Note that when $q=0$, i.e. the Yang-Baxter deformation is trivial $($free case$)$, Hiai's functor reduces to Shlyakhtenko's functor. However, when $(U_t)$ is trivial, Hiai's functor reduces to the $q$-Gaussian functor. Further, when $q=0$ and $(U_t)$ is trivial, one obtains Voiculescu's functor. 

The $q$-Araki-Woods algebras are quite complicated objects. Structural properties of the Bo$\overset{.}{\text{z}}$ejko-Speicher factors have been studied in \cite{avsec,BS,BKS,Dab14,ER,nou,sniady,Shlyakhtenko2,Shlyakhtenko3}, and those of the free Araki-Woods factors have been studied in \cite{Ho09, HR11,BHV15}; though, this list is by no means complete. On the contrary, very little is known about the $q$-Araki-Woods algebras $($see \cite{nou,Nou-Araki,Was}$)$. In fact, the simplest question regarding its factoriality is not known in full generality. This is because, unlike the case when $q=0$, there is very little room to perform meaningful calculations with the standard generators of these algebras owing to the complicated nature of the scalar product $($of the GNS space$)$ and the interference of the modular group. The case when $dim(\CH_\R)=2$ is the hardest and most notorious.  

This paper attempts the factoriality problem of the $q$-Araki-Woods algebras. Before describing our results, though, we note the past efforts in this direction. 

The factoriality of the Bo$\overset{.}{\text{z}}$ejko-Speicher algebras was not a single-handed attempt. In \cite{BKS}, the factoriality was established by Bo$\overset{.}{\text{z}}$ejko, K\"{u}mmerer and Speicher when $\dim(\CH_\R)$ is infinite. By making careful estimates of norms of certain operators on the $q$-deformed full Fock space, \'{S}niady established the factoriality when $dim(\CH_\R)$ is finite but greater than a constant depending on $q$. It was finally settled in \cite{ER} by showing that any standard generator of the Bo$\overset{.}{\text{z}}$ejko-Speicher algebra generates a strongly mixing $($see \cite{CFM2} for defn.$)$ MASA in the ambient algebra. Therefore, the center of the algebra gets arrested in two orthogonal $($with respect to the vacuum state$)$ MASAs and is thus reduced to scalars.  By using freeness and modular data, Shlyakhtenko established the factoriality of the free Araki-Woods algebras in \cite{Shlyakhtenko}. 

Hiai established the factoriality of the $q$-Araki-Woods algebras in \cite{Hiai} in the case when the dimension of the almost periodic part of $(U_t)$ is infinite or  $(U_t)$ is weakly mixing, by showing that the centralizer of the vacuum state has trivial relative commutant $($irreducible inclusion$)$. Unfortunately, there is a gap in the proof of \cite[Thm. 3.2]{Hiai}. To be precise, Hiai's proof holds only in the case when the set of eigenvalues of the analytic generator of $(U_t)$ has a limit point in $\R$ other than $0$. Without this assumption, the conclusion `$\varphi(y^{*}x)=0$' in the last equation in \cite[Thm. 3.2]{Hiai} would fail, and hence, the final statement cannot be concluded. Using the theory of free monotone transport $($see \cite[Thm. 4.5]{Ne15}$)$, Nelson proved that when $\CH_{\R}$ is finite-dimensional, the $q$-Araki-Woods algebras are isomorphic to the free Araki-Woods factors for sufficiently small values of $\abs{q}$; in particular, they are factors, in this case. 

\cite{bik-kunal} closely followed \cite{ER}. In \cite{bik-kunal}, the authors observe that a non-zero vector $\xi\in\CH_\R$ fixed by $(U_t)$ enables the construction of an orthonormal basis of analytic vectors $($of the GNS space$)$ which behaves well as long as one only considers its interaction with elements of the algebra $vN(s_q(\xi))$, where $s_q(\xi)$ is the standard self-adjoint generator of the $q$-Araki-Woods algebra corresponding to $\xi$. This allows exploiting the ideas in \cite{ER} to show that $vN(s_q(\xi))$ is a strongly mixing MASA living inside the centralizer of the vacuum state. Using this, the factoriality of the $q$-Araki-Woods algebras was shown to be true if $(U_t)$ is not ergodic or has a non-zero weakly mixing component. Irreducibility of the centralizer was also obtained when $(U_t)$ is not ergodic, and the almost periodic component is at least two-dimensional. {At the same time, a proof of the same statement on factoriality was obtained independently by Skalski and Wang \cite{SW}; note that the phenomenon of mixing is implicit in this proof too}.

When $(U_t)$ is ergodic and almost periodic, the above ideas involving MASAs freeze. This is because the standard self-adjoint generators no longer generate abelian von Neumann subalgebras with appropriate conditional expectations or operator-valued weights. In fact, it is far worse that these algebras are quasi split \cite{BM2} and therefore admit very large relative commutants. 

Given the previous attempts, we are left to deal with the case when $\CH_\R= \mathbb{R}^2\oplus \mathcal{K}_\R$, where $\mathcal{K}_\R$ is a real Hilbert space $($could be $0)$, and $\mathbb{R}^2$ is reducing subspace for $(U_t)$ with associated sub representation being ergodic. As discussed before, we cannot rely on MASAs anymore, but our strategy is to leverage with a bit of `mixing'. The analysis is split into two cases.

When $\mathcal{K}_\R=0$, denote $\mathcal{A}$ to be the algebra generated by the centralizer and the commutant of the ambient algebra. The novelty of this approach is to identify a suitable subspace inside the GNS space of the centralizer which possesses `sufficient mixing' $($the associated subspace in \cite{bik-kunal, ER} was the GNS space of a MASA$)$, and use an appropriate orthonormal basis of that subspace to track a vector $\xi$, so that the cyclic subspace $\mathcal{A}\xi$ fully captures the `size' of the relative commutant of the centralizer. However, there is a payoff. Since we are working with a  `mixing subspace', we lose algebraic techniques (most importantly cannot locate unitaries) and depend on norm estimates of operators involving creation and annihilation operators. This forces us a bargain with the characteristic parameter that defines the two-dimensional representation, but our results remain valid for all $q\neq 0$. 

When $\mathcal{K}_\R\neq 0$, a similar idea with slight modification works. This time, the increment in dimension allows us to choose unitaries from an orthogonal subalgebra and frees us from any bargain with the characteristic parameter as was in the previous case, and the conclusions are no longer subject to any constraints. 

The key to the factoriality is Lemma \ref{sxi} in which `some mixing' is analyzed to control the relative commutant of the centralizer. The main results of this paper are summarized as follows:  \\ 

\noindent \textbf{Theorem}: Let $\CH_\R= \mathbb{R}^2\oplus \mathcal{K}_\R$, where $\mathcal{K}_\R$ is a real Hilbert space $($could be $0)$, and let $\mathbb{R}^2$ be reducing subspace for $(U_t)$ with associated sub representation being ergodic. Let $\lambda\in (0,1)$ be the parameter that defines $(U_t)$ on $\R^2$. Then, the following holds: 
\begin{enumerate}
\item if $dim(\CH_\R)=2$ and $\lambda$ is small (depending on $q$), then the centralizer of the $q$-quasi free $($vacuum$)$ state is irreducible if $q\neq 0$;
\item if $dim(\CH_\R)\geq 3$, the $q$-Araki-Woods algebras are factors for all $-1<q<1$;
\item if the almost periodic part of $(U_t)$ is sufficiently large, then the centralizer of the $q$-quasi free state is irreducible for all $-1<q<1$. \\
\end{enumerate} 
 
This paper is organized as follows. In \S\ref{Araki-Woods}, we lay out all the technical prerequisites that are needed to address the problem. The technical lemmas that will be used to deal with factoriality is divided into two groups under \S\ref{mainsec}; the case when $dim(\CH_\R)=2$ and $(U_t)$ is ergodic appears in \S\ref{casebig2} and that for all cases $(dim(\CH_\R)\geq 2)$ appears in \S\ref{casebig3}. The main theorems on factoriality appear in \S\ref{Secfactor}. Irreducibility of the centralizer is discussed in \S\ref{cent_irreducibel}.

\section{Preliminaries}\label{Araki-Woods}

In this section, we accumulate some well known facts about $q$-deformed Araki-Woods von Neumann algebras constructed by Hiai in \cite{Hiai} that will be indispensable for our purpose. As a convention, all Hilbert spaces in this paper are separable, all von Neumann algebras have separable preduals, inclusions of von Neumann algebras are unital and inner products are linear in the \emph{ second variable}. This section has overlap with \cite[\S2]{bik-kunal}.

\subsection{Hiai's Construction}\label{ConsHiai} Let $\CH_{\R}$ be a real Hilbert space with $\dim(\CH_{\R})\geq 2$ and let $(U_t)_{t \in \R}$ be a strongly continuous orthogonal representation of $\R$ on
$\CH_{\R}$.  Let $\CH_\C=\CH_{\R}\otimes_\R \C$ denote the
complexification of $\CH_{\R}$.  Denote the inner product and norm on
$\CH_\C $ by $\langle \cdot , \cdot \rangle_{\CH_\C} $ and
$\norm{\cdot}_{\CH_\C}$ respectively. Identify $\CH_{\R}$ in
$\CH_{\C}$ by $\CH_{\R} \otimes 1 $.  Thus, $\CH_\C = \CH_{\R} + i
\CH_{\R}$, and as a real Hilbert space the inner product of $\CH_{\R}
$ in $\CH_\C $ is given by $\Re \langle \cdot, \cdot \rangle_{\CH_\C}
$.  Consider the bounded anti-linear $($complex $($left$)$ conjugation$)$ operator $\mathcal{J}: \CH_\C
\rightarrow \CH_\C $ given by $\mathcal{J}(\xi + i \eta )= \xi - i
\eta$, $\xi, \eta \in \CH_{\R} $, and note that $\mathcal{J}\xi = \xi$
for $\xi\in \CH_{\R}$. Moreover,
\begin{align*}
\langle \xi, \eta \rangle_{\CH_\C} = \overline {\langle \eta , \xi \rangle}_{\CH_\C}  = \langle \eta , \mathcal{J} \xi \rangle_{\CH_\C}, \text{ for all } \xi \in \CH_{\C} , \eta \in \CH_{\R}.
\end{align*}
By abuse of notation we denote the linear extension of $(U_t)$ on $\CH_{\C}$ by the same notation, which is again a strongly continuous one-parameter group of unitaries in $\CH_{\C}$. Let $A$ denote the analytic generator of $(U_t)$. Then $A$ is positive,  nonsingular and self-adjoint. Note that $\CJ A = A^{-1}\CJ$. 

Introduce a new inner product on $\CH_{\C}$ by $\langle \xi, \eta \rangle_U = \langle \frac{2}{1+ A^{-1}} \xi, \eta \rangle_{\CH_\C}$, $\xi, \eta \in \CH_{\C}$, and let $\norm{\cdot}_{U}$ denote the associated norm on $\CH_{\C}$. Let $\CH$ denote the complex Hilbert space obtained by completing $(\CH_{\C}, \norm{\cdot}_{U})$.  The inner product and norm of $\CH$ will respectively be denoted by $\langle\cdot,\cdot\rangle_U$ and $\norm{\cdot}_{U}$ as well. Then, $(\CH_{\R}, \norm{\cdot}_{\CH_\C})\ni \xi \overset{\imath}\mapsto\xi \in (\CH_{\C}, \norm{\cdot}_{U})\subseteq (\CH,\norm{\cdot}_{U})$, is an isometric embedding of the real Hilbert space $\CH_\R$ in $\CH$ $($in the sense of \cite{Shlyakhtenko}$)$. With abuse of notation, we will identify $\CH_\R$ with its image $\iota(\CH_\R)$. Then, $\CH_{\R}\cap i\CH_{\R}=\{0\}$ and $\CH_{\R}+ i\CH_{\R}$ is dense in $\CH$ $($see pp. 332 \cite{Shlyakhtenko}$)$. 

Note that $(U_t)$ extends to a strongly continuous unitary representation $(\widetilde{U}_t)$ of $\R$ on $\CH$. Let $\widetilde{A}$ be the analytic generator associated to $(\widetilde{U}_t)$, which is obviously an extension of $A$. Any eigenvector of $\widetilde{A}$ is an eigenvector of $A$ corresponding to the same eigenvalue \cite[Prop. 2.1]{bik-kunal}. 
Since the spectral data of $A$ and $\widetilde{A}$ $($and hence of $(U_t)$ and $(\widetilde{U}_t))$ are essentially the same $($see \cite[\S2]{bik-kunal} for details$)$, and $\widetilde{U}_t,\widetilde{A}$ are respectively extensions of $U_t,A$ for all $t\in \R$, so we would now write $\widetilde{A}=A$ and $\widetilde{U}_t=U_t$ for all $t\in\R$. 

For $ q \in (-1, 1)$ and for the Hilbert space $\CH$, consider the associated 
$q$-Fock space $\CF_q(\CH)$ introduced in \cite{BS}. $\CF_q(\CH)$ is constructed as follows. Let $\Omega$ be a distinguished unit vector in $\C$ usually referred to as the vacuum vector. Denote $\CH^{\otimes 0}= \C\Omega$, and, for $n\geq 1$, let $\CH^{\otimes n} = \text{ span}_{\C}\{\xi_1 \otimes \cdots \otimes \xi_n : \xi_i \in \CH \text{ for }1\leq i\leq n\}$ denote the algebraic tensor products. Let $\CF_{fin}(\CH)=\text{ span}_{\C}\{\CH^{\otimes n}: n\geq 0 \}$. For $n,m\geq 0$ and $f=\xi_1 \otimes \cdots \otimes \xi_n \in \CH^{\otimes n}$, $g= \zeta_1\otimes \cdots \otimes \zeta_m\in \CH^{\otimes m}$, the association 
\begin{align}\label{qFock}
\langle f, g \rangle_q = \delta_{m, n} 
\sum_{ \pi \in S_n } q^{ i(\pi) } \langle \xi_1, \zeta_{\pi(1) }\rangle_U \cdots\langle \xi_n, \zeta_{\pi(n) } \rangle_U, 
\end{align} 
where $i(\pi)$ denotes the number of  inversions of the permutation $\pi \in S_n $, defines a positive definite sesquilinear form on  $ \CF_{fin}(\CH)$ and the $q$-Fock space $\CF_q(\CH) $ is the completion of $ \CF_{fin}(\CH)$ with respect to the norm $\norm{\cdot}_{q}$ induced by $ \langle \cdot, \cdot \rangle_q $. Denote $\CH^{\otimes_q n}:=
\overline{\CH^{\otimes n}}^{\norm{\cdot}_q}$, $n\geq 0$. Note that $\norm{\cdot}_{q}=\norm{\cdot}_{U}$ on $\CH^{\otimes_q 1}=\CH$. 

For $\xi\in \CH $, the left $q$-creation and $q$-annihilation operators on $\CF_q(\CH) $ are respectively defined by:
\begin{align}\label{Leftmult}
&c_q(\xi)\Omega  = \xi, \\ 
\nonumber&c_q(\xi) (\xi_1 \otimes \cdots\otimes \xi_n) =\xi \otimes  \xi_1 \otimes \cdots \otimes \xi_n,\\
\nonumber&\text{and},\\
\nonumber&c_q(\xi)^*\Omega  = 0,\\ 
\nonumber&c_q(\xi)^* (\xi_1 \otimes \cdots\otimes \xi_n) = \sum_{i = 1}^n{q^{i-1}}
\langle   \xi , \xi_i\rangle_U \xi_1 \otimes \cdots \otimes \xi_{i-1} \otimes \xi_{i+1} \otimes \cdots \otimes \xi_n, 
\end{align}
where $\xi_1 \otimes \cdots\otimes \xi_n\in \CH^{\otimes_{q} n}$ for $n\geq 1$.
The operators $c_q(\xi)$ and $c_q(\xi)^* $ are bounded on $ \CF_q(\CH) $ and they are 
adjoints of each other. Moreover,  
\begin{equation}\label{NormCreation}
\norm{c_q(\xi)} =\begin{cases} \frac{1}{\sqrt{1-q}} \norm{\xi}_U, \quad &\text{ if } 0\leq q < 1;\\
\norm{\xi}_U, \quad &\text{ if } -1 <  q \leq  0.\end{cases}	
\end{equation}
Moreover, they satisfy the following $q$-commutation relation:
\begin{align}\label{Commutationrelation}
c_q(\xi)^* c_q(\zeta) -q c_q(\zeta) c_q(\xi)^* = \langle \xi, \zeta  \rangle_U 1, \text{ for all } \xi,\zeta\in \CH.
\end{align}

The following Lemma from \cite{bik-kunal} will be crucial for our purpose. 

\begin{Lemma}\cite[Lemma 2.3]{bik-kunal}\label{SplitAdjoint}
Let $\xi, \xi_i, \eta_j \in \CH$, for $1\leq i\leq n$, $1\leq j\leq m$. Then, 
\begin{align*} 
c_q&(\xi)^* \bigg( (\xi_1  \otimes \cdots \otimes \xi_n) \otimes (\eta_1 \otimes \cdots \otimes \eta_m)\bigg)\\
&= \bigg(c_q(\xi)^*(\xi_1\otimes \cdots \otimes \xi_n)\bigg) \otimes (\eta_1 \otimes \cdots \otimes \eta_m)\\
&\indent\indent\indent\indent\indent+q^n 
(\xi_1  \otimes \cdots \otimes \xi_n) \otimes \bigg(c_q(\xi)^*(\eta_1 \otimes \cdots \otimes \eta_m) \bigg).
\end{align*} 
\end{Lemma}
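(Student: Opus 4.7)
The plan is to prove the identity by a direct computation from the defining formula for $c_q(\xi)^*$ in \eqref{Leftmult}, viewing the element $(\xi_1 \otimes \cdots \otimes \xi_n) \otimes (\eta_1 \otimes \cdots \otimes \eta_m)$ as a single pure tensor of length $n+m$ in $\CH^{\otimes_q(n+m)}$ and then splitting the resulting sum according to whether the deleted slot lies in the first or second block. No operator-theoretic input beyond the explicit formula is needed; everything reduces to rearranging a finite sum. By bilinearity, it suffices to prove the identity on pure tensors, which is exactly what is stated.

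To carry this out, I would first relabel, setting $\zeta_i = \xi_i$ for $1\leq i\leq n$ and $\zeta_{n+j} = \eta_j$ for $1\leq j\leq m$, so that the left-hand side becomes $c_q(\xi)^*(\zeta_1\otimes\cdots\otimes\zeta_{n+m})$. Applying the formula for $c_q(\xi)^*$ from \eqref{Leftmult} directly gives
\begin{align*}
c_q(\xi)^*(\zeta_1\otimes\cdots\otimes\zeta_{n+m}) = \sum_{k=1}^{n+m} q^{k-1}\langle \xi,\zeta_k\rangle_U\,\zeta_1\otimes\cdots\otimes\zeta_{k-1}\otimes\zeta_{k+1}\otimes\cdots\otimes\zeta_{n+m}.
\end{align*}
I would then split this sum at $k=n$. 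For $1\leq k\leq n$, the omitted slot sits in the first block, so the corresponding term factors as $q^{k-1}\langle\xi,\xi_k\rangle_U\,(\xi_1\otimes\cdots\otimes\widehat{\xi_k}\otimes\cdots\otimes\xi_n)\otimes(\eta_1\otimes\cdots\otimes\eta_m)$; summing over $k$ and comparing with \eqref{Leftmult} applied to $\xi_1\otimes\cdots\otimes\xi_n$ recognizes this partial sum as $\bigl(c_q(\xi)^*(\xi_1\otimes\cdots\otimes\xi_n)\bigr)\otimes(\eta_1\otimes\cdots\otimes\eta_m)$.

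For $k>n$, I would reindex $j = k-n$. The omitted slot now lies in the second block, and the sign factor becomes $q^{k-1}=q^{n+j-1}=q^{n}\cdot q^{j-1}$. Factoring out $q^n$ and summing over $j=1,\ldots,m$, a second application of \eqref{Leftmult}, this time to $\eta_1\otimes\cdots\otimes\eta_m$, identifies the remaining part as $q^n(\xi_1\otimes\cdots\otimes\xi_n)\otimes\bigl(c_q(\xi)^*(\eta_1\otimes\cdots\otimes\eta_m)\bigr)$, which together with the first partial sum yields the claim. The only real obstacle is keeping the index bookkeeping straight so that the shift by $n$ in the exponent of $q$ appears cleanly; there is no analytic difficulty because pure tensors already lie in the algebraic Fock space $\CF_{fin}(\CH)$ on which $c_q(\xi)^*$ is defined pointwise by \eqref{Leftmult}.
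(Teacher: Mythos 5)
Your proof is correct and complete. The paper itself does not reprove this lemma but cites it from \cite[Lemma~2.3]{bik-kunal}; your argument — relabelling the concatenated pure tensor, applying the defining formula for $c_q(\xi)^*$ from \eqref{Leftmult} once, and splitting the resulting sum at index $n$ so that the factor $q^{k-1}=q^{n}q^{(k-n)-1}$ produces the $q^n$ in front of the second block — is precisely the direct computation one expects, and every step is valid on the algebraic Fock space.
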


Following \cite{Hiai, Shlyakhtenko}, consider the $C^*$-algebra 
$\Gamma_q( \CH_\R, U_t) := C^{*}\{ s_q(\xi) : \xi \in \CH_\R \}$ and the von Neumann algebra
$\Gamma_q( \CH_\R, U_t)^{\prime\prime}$, where 
\begin{align*}
s_q(\xi)  = c_q(\xi) + c_q(\xi)^* , \text{ }\xi \in \CH_\R.
\end{align*}  
$\Gamma_q( \CH_\R, U_t)^{\prime\prime}$ is known as the $q$-\textit{deformed Araki-Woods von Neumann algebra}  (see \cite[\S3]{Hiai}).  The vacuum state $\varphi_{q, U}:= 
\langle \Omega, \cdot\text{ } \Omega\rangle_q$ $($also called the $q$-\textit{quasi free state}$)$, is a faithful normal state of $\Gamma_q( \CH_\R, U_t)^{\prime\prime}$ and $\CF_{q}(\CH)$ is the GNS Hilbert space of $\Gamma_q( \CH_\R, U_t)^{\prime\prime}$ associated to $\varphi_{q,U}$. Thus, $\Gamma_q( \CH_\R, U_t)^{\prime\prime}$ acting on $\CF_{q}(\CH)$ is in standard form \cite{Hag75}.  

\textit{Making slight violation of the traditional notations, we will use the symbols $\langle \cdot,\cdot\rangle_q$ and $\norm{\cdot}_{q}$ respectively to denote the inner product and two-norm of elements of the GNS Hilbert space}.

\subsection{Modular Theory}\label{modular}
Most of what follows in \S\ref{modular} and \S\ref{Commute} is taken from \cite{Shlyakhtenko,Hiai}. We need to have a convenient description of the commutant and centralizer of $\Gamma_q( \CH_\R, U_t)^{\prime\prime}$ $($which has been recorded in the case $q=0$ in \cite{Shlyakhtenko} and a similar collection of operators in the commutant has been identified in \cite{Hiai}$)$. Thus, we need to record some facts related to the modular theory of the $q$-quasi free state $\varphi_{q,U}$.  Let $J_{\varphi_{q,U}}$ and $\Delta_{\varphi_{q,U}}$ respectively denote the modular conjugation and modular operator associated to $\varphi_{q,U}$ and let $S_{\varphi_{q,U}}=J_{\varphi_{q,U}}\Delta_{\varphi_{q,U}}^{\frac{1}{2}}$. Then, for $n\in\N$, 
\begin{align}\label{modulartheory}
&J_{\varphi_{q,U}}(\xi_1 \otimes \cdots \otimes \xi_n) = A^{-1/2}\xi_n \otimes \cdots \otimes A^{-1/2}\xi_1, \text{ }\forall\text{ } \xi_{i}\in \mathcal{H}_{\mathbb{R}}\cap \mathfrak{D}(A^{-\half});\\
\nonumber&\Delta_{\varphi_{q,U}}(\xi_1 \otimes\cdots \otimes \xi_n ) =
A^{-1}\xi_1 \otimes \cdots\otimes A^{-1}\xi_n, \text{ }\forall\text{ } \xi_{i}\in \mathcal{H}_{\mathbb{R}}\cap \mathfrak{D}(A^{-1});\\
	\nonumber&S_{\varphi_{q,U}}(\xi_1 \otimes \cdots \otimes \xi_n) =\xi_n \otimes \cdots \otimes \xi_1, \text{ }\forall\text{ } \xi_{i}\in \mathcal{H}_{\mathbb{R}}.
\end{align}
The modular automorphism group $(\sigma_{t}^{\varphi_{q,U}})$ of $\varphi_{q,U}$ is given by $\sigma_{-t}^{\varphi_{q,U}}=\text{Ad}(\CF(U_{t}))$, where $\CF(U_{t})=id\oplus \oplus_{n\geq 1} U_{t}^{\otimes_q n}$, for all $t\in \mathbb{R}$. In particular,
\begin{align}\label{modularaut}
\sigma^{\varphi_{q,U}}_{-t}(s_q(\xi)) = s_q(U_{t}\xi), \text{ for all } \xi \in \mathcal{H}_{\mathbb{R}}. 
\end{align}
To reduce notation, the complex $($left$)$ conjugation $\mathcal{J}$ associated to $\CH_\R\subset \CH$ will be denoted 
by $\overline{\xi+ i \eta}= \xi-i \eta$ for $\xi,\eta\in\CH_\R$. It corresponds with $S_{\varphi_q,U}$.

\subsection{Commutant}\label{Commute} Now we proceed to describe the commutant of $\Gamma_q( \CH_\R, U_t)^{\prime\prime}$. Consider the  set
\begin{align*}
\CH_\R' = \{ \xi \in \CH : \langle \xi, \eta \rangle_U \in \R \text{ for all }  \eta \in \CH_\R \}. 
\end{align*}
Then $\CH_\R^\prime$ is a real  subspace. Note that $\overline{ \CH_\R^\prime + i\CH_\R^\prime } = \CH$ and $ \CH_\R^\prime \cap i \CH^{\prime}_\R = \{ 0\}$. It is easy to check that $A^{-1/2} \zeta \in \CH_\R^\prime$  for all $\zeta \in \mathfrak{D}(A^{-\half})\cap \CH_\R$.



Now for $\xi\in \CH$, define the right creation operator $c_{q, r}(\xi)$ on $\CF_q(\CH) $ by  
\begin{align}\label{Rightmult}
& c_{q, r}(\xi) \Omega = \xi, \\
\nonumber&c_{q, r}(\xi) (\xi_1 \otimes \cdots \otimes \xi_n) =  \xi_1 \otimes \cdots \otimes  \xi_n \otimes \xi, \text{ }\xi_{i}\in \CH, n\geq 1. 
\end{align}
Clearly, $c_{q, r}(\xi) = \jmath c_q(\xi) {\jmath}^{*}$, where $\jmath: \mathcal{F}_{q}(\CH) \rightarrow \mathcal{F}_{q}(\CH) $ is the unitary defined by 
\begin{align}\label{flip}
&\jmath ( \xi_1 \otimes \cdots \otimes \xi_n ) =  \xi_n \otimes \cdots \otimes \xi_1, \text{ where }\xi_{i}\in \CH \text{ for all }1\leq i\leq n, n\geq 1,\\
\nonumber&\jmath(\Omega)=\Omega.
\end{align}
Therefore, $c_{q, r}(\xi)\in \mathbf{B}(\CF_q(\CH)) $ and its adjoint $c_{q, r}(\xi)^*$ is given by 
\begin{align}\label{Rightmultadj}
&c_{q, r}(\xi)^* \Omega = 0, \\
\nonumber&c_{q, r}(\xi)^* ( \xi_1 \otimes \cdots \otimes \xi_n) = \sum_{i = 1}^n q^{n-i}\langle  \xi , \xi_i \rangle_U \xi_1 \otimes \cdots \otimes \xi_{i-1} \otimes \xi_{i+1} \otimes \cdots \otimes \xi_n, \text{ }\xi_{i}\in \CH, n\geq 1. 
\end{align} 

Write $ s_{q, r}(\xi) = c_{q, r}(\xi) + c_{q, r}(\xi)^*$, $\xi\in\CH$. The following result describes the commutant of $\Gamma_q(\CH_\R, U_t)^{\prime\prime}$.

\begin{Theorem}\cite[Thm. 2.4]{bik-kunal}\label{commutant}
Suppose $ \xi \in \dom(A^{-1})\cap \CH_\R $. Then $J_{\varphi_{q,U}}s_q(\xi) J_{\varphi_{q,U}} = s_{q, r}(A^{-\fhalf} \xi)$. Moreover, $\Gamma(\CH_{\R}, U_{t})^{\prime}=\{s_{q, r}(\xi): \xi\in \CH_\R^\prime\}^{\prime\prime}$.
\end{Theorem}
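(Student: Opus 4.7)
The plan is to establish the two assertions separately. For the first identity, the approach is direct computation on the total set of simple tensors $\eta_1 \otimes \cdots \otimes \eta_n$ with $\eta_i \in \CH_\R \cap \dom(A^{-1/2})$. Using \eqref{modulartheory}, $J_{\varphi_{q,U}}$ reverses the order of the factors and applies $A^{-1/2}$ to each one. After splitting $s_q(\xi) = c_q(\xi) + c_q(\xi)^*$, the creation part, once sandwiched, produces $A^{-1/2}\xi$ at the \emph{rightmost} position of the resulting tensor, matching $c_{q,r}(A^{-1/2}\xi)$. The annihilation part will be handled via Lemma \ref{SplitAdjoint}, which breaks $c_q(\xi)^*$ acting on a tensored object into two summands and delivers the scalar $\langle \xi,\cdot\rangle_U$; after reversing indices the weights $q^{i-1}$ of \eqref{Leftmult} transform into the weights $q^{n-i}$ of \eqref{Rightmultadj}, yielding $c_{q,r}(A^{-1/2}\xi)^*$. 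The hypothesis $\xi\in\dom(A^{-1})$ guarantees that $A^{-1/2}\xi\in\dom(A^{-1/2})$, which is needed when $J_{\varphi_{q,U}}$ is applied a second time so that intermediate vectors stay controlled and lie in $\CH_\R^\prime$.

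For the commutant description, the first step will be to show $[s_q(\zeta),s_{q,r}(\eta)]=0$ whenever $\zeta\in\CH_\R$ and $\eta\in\CH_\R^\prime$. A direct computation on simple tensors, using \eqref{Leftmult} and \eqref{Rightmultadj} together with Lemma \ref{SplitAdjoint}, gives $[c_q(\zeta),c_{q,r}(\eta)]=0$ and $[c_q(\zeta)^*,c_{q,r}(\eta)^*]=0$ identically, while the mixed commutators reduce to scalar multiples of the identity on each level:
\begin{align*}
[c_q(\zeta)^*,c_{q,r}(\eta)]\big|_{\CH^{\otimes_q n}} = q^n \langle \zeta,\eta\rangle_U, \qquad [c_q(\zeta),c_{q,r}(\eta)^*]\big|_{\CH^{\otimes_q n}} = -q^n \langle \eta,\zeta\rangle_U.
\end{align*}
These two contributions cancel in $[s_q(\zeta),s_{q,r}(\eta)]$ exactly when $\langle \zeta,\eta\rangle_U=\overline{\langle \zeta,\eta\rangle_U}$, i.e.\ when $\langle \zeta,\eta\rangle_U\in\R$, which is the defining condition of $\CH_\R^\prime$. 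This yields $\{s_{q,r}(\eta):\eta\in\CH_\R^\prime\}^{\prime\prime}\subseteq\Gamma_q(\CH_\R,U_t)^\prime$.

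The reverse inclusion will come from Tomita--Takesaki theory: since $\Omega$ is cyclic and separating for $M=\Gamma_q(\CH_\R,U_t)^{\prime\prime}$, one has $M^\prime = J_{\varphi_{q,U}} M J_{\varphi_{q,U}}$. By the first identity, every operator $J_{\varphi_{q,U}} s_q(\xi) J_{\varphi_{q,U}} = s_{q,r}(A^{-1/2}\xi)$ with $\xi\in\dom(A^{-1})\cap\CH_\R$ already lies in $\{s_{q,r}(\eta):\eta\in\CH_\R^\prime\}^{\prime\prime}$. To conclude, I would verify that $A^{-1/2}(\dom(A^{-1})\cap\CH_\R)$ is $\|\cdot\|_U$-dense in $\CH_\R^\prime$, a standard spectral-theoretic fact about the positive selfadjoint $A$ combined with the characterization of $\CH_\R^\prime$ recalled in \S\ref{Commute}; together with the right analogue of the norm bound \eqref{NormCreation}, which makes $\eta\mapsto s_{q,r}(\eta)$ norm-continuous, this forces the von Neumann algebra generated by $\{s_{q,r}(A^{-1/2}\xi):\xi\in\dom(A^{-1})\cap\CH_\R\}$ to coincide with $\{s_{q,r}(\eta):\eta\in\CH_\R^\prime\}^{\prime\prime}$, completing the proof. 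I expect the main delicate points to be (i) keeping the antilinearity of $J_{\varphi_{q,U}}$ consistent with the $A^{-1/2}$ factors appearing inside the tensors when verifying the first identity, and (ii) the density statement used in the reverse inclusion; both require careful bookkeeping but no essential new idea beyond what \S\ref{ConsHiai}--\S\ref{Commute} already provides.
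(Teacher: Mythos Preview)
The paper does not supply its own proof of this theorem; it is quoted verbatim from \cite[Thm.~2.4]{bik-kunal} and used as a black box. So there is no ``paper's proof'' to compare against. That said, your outline is the standard route and is essentially correct. Two comments are worth making.

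First, in verifying $J c_q(\xi) J = c_{q,r}(A^{-1/2}\xi)$ you will need to apply the formula \eqref{modulartheory} for $J$ to tensors whose entries are of the form $A^{-1/2}\eta_i$ with $\eta_i\in\CH_\R$, and these are generally not in $\CH_\R$. The clean way around this is to note that $J$ restricted to $\CH^{\otimes_q n}$ is $I^{\otimes n}$ composed with the flip, where $I:\CH\to\CH$ is the antilinear involution determined by $I\xi=A^{-1/2}\xi$ on $\CH_\R\cap\dom(A^{-1/2})$; since $J^2=1$ forces $I^2=1$, one gets $I(A^{-1/2}\eta_i)=\eta_i$ and the computation goes through. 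You flag this issue under point (i), but it deserves one explicit line rather than ``careful bookkeeping''.

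Second, the density statement $A^{-1/2}(\dom(A^{-1})\cap\CH_\R)$ dense in $\CH_\R'$ is \emph{not needed}. Once you have established the commutation $[s_q(\zeta),s_{q,r}(\eta)]=0$, you already have $\{s_{q,r}(\eta):\eta\in\CH_\R'\}''\subseteq M_q'$. For the reverse inclusion it suffices to know that $\dom(A^{-1})\cap\CH_\R$ is dense in $\CH_\R$ (so that $M_q=\{s_q(\xi):\xi\in\dom(A^{-1})\cap\CH_\R\}''$) and that $A^{-1/2}\xi\in\CH_\R'$ for such $\xi$ (recorded just before the theorem statement). Then $M_q'=JM_qJ=\{s_{q,r}(A^{-1/2}\xi):\xi\in\dom(A^{-1})\cap\CH_\R\}''\subseteq\{s_{q,r}(\eta):\eta\in\CH_\R'\}''$, and you are done. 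Your density claim about $\CH_\R'$ is in fact true, but proving it is extra work you can skip.
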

The complex $($right$)$ conjugation associated to $\CH_\R'\subset \CH$ will be denoted by $\overline{\xi+ i \eta}^{\,r}=\xi-i \eta$ for $\xi,\eta\in\CH_\R'$. It corresponds with $\jmath S_{\varphi_q,U}\jmath^*$.  

\subsection{Notations and some technical facts}\label{not}

In this paper, we are interested in the factoriality of $\Gamma_q(\CH_\R, U_t)^{\prime\prime}$ and the orthogonal representation remains arbitrary but fixed. Thus, to reduce notation, we will write $M_q=\Gamma_q(\CH_\R, U_t)^{\prime\prime}$ and $\varphi=\varphi_{q,U}$. We will also denote $J_{\varphi_{q,U}}$ by $J$ and $\Delta_{\varphi_{q,U}}$ by $\Delta$. As $\Omega$ is separating for both $M_q$ and $M_q^{\prime}$, for $\zeta\in M_q\Omega$ and $\eta\in M_q^{\prime}\Omega$ there exist unique $x_{\zeta}\in M_q$ and $x^{\prime}_{\eta}\in M_q^{\prime}$ such that $\zeta =x_{\zeta}\Omega$ and $\eta=x_{\eta}^{\prime}\Omega$. In this case, we will write 
\begin{align*}
W(\zeta)=x_{\zeta} \text{ and } W_r(\eta)=x_{\eta}^{\prime}. 
\end{align*}
Note that $W_r(\eta)= JW(J\eta)J$, as $J\eta\in M_q\Omega$ from Tomita's fundamental theorem. Thus, for example, as $\xi\in M_q\Omega$ for every $\xi\in \CH_\R$, so $W(\xi+i\eta)=s_q(\xi)+is_q(\eta)$ for all $\xi,\eta\in\CH_\R$.

Write $\mathcal{Z}(M_q)= M_q\cap M_q^{\prime}$. Let $M_q^{\varphi}=\{x\in M_q: \sigma_t^{\varphi}(x)=x \text{ for all }t\in\R\}$ denote the centralizer of $M_q$ associated to the state $\varphi$. Recall that $x\in M_q$ is analytic with respect to $(\sigma_t^{\varphi})$ if and only if the function 
$\R \ni t\mapsto \sigma_t^{\varphi}(x)\in M_q$ extends to a weakly entire function. We say that a vector $\xi\in M_q\Omega$ is \textit{analytic}, if $W(\xi)$ is analytic for $(\sigma_t^{\varphi})$. 

In order to control calculations within the page limit, we adopt the following notations for convenience.
\begin{enumerate}
\item $\xi_1\cdots\xi_n := \xi_1\otimes\cdots\otimes \xi_n \in \CH^{\otimes_q n}$ for 	 $ \xi_i\in \CH $, $1\leq i\leq n$;
\item $c_q(\xi)=c_q(\xi_1)\cdots c_q(\xi_n)$ for $\xi=\xi_1\cdots\xi_n\in \CH^{\otimes_q n}$;
\item $c_{q,r}(\xi)=c_{q,r}(\xi_n)\cdots c_{q,r}q(\xi_1)$ for
  $\xi=\xi_1\cdots\xi_n\in \CH^{\otimes_q n}$;
\item $C_q= \prod_{i=1}^\infty \frac 1{1-|q|^i}$;\\
\item $d_0=1$, $d_j=\prod_{i=1}^j (1-q^i)$, $j\in\N$, and $d_\infty =\prod_{i=1}^\infty (1-q^i)$; \\	 
\item $[n]_q := 1+ q+ \cdots +  q^{(n-1)}$, $[n]_q! :=\prod_{j=1}^{n} [j]_q, \text{ for }n\geq 1$,    
and $[0]_q := 0$, $[0]_q! := 1$ by convention.
\end{enumerate}
{Note that $d_k,\frac{1}{d_k}\leq C_q$ for all $k\geq 0$ and $\abs{q}<1$}.

The following norm estimates will be crucial (see  \cite{BKS,BS,B99,ER}).\\
\noindent$\bullet$ For all  $\xi \in \CH^{\otimes_q n }$, 
\begin{align}\label{Normelt}
\| c_q(\xi)\|\leq \sqrt{C_q} \|\xi\|_q.
\end{align}
\noindent$\bullet$ If $\xi\in \CH$ and ${\norm{\xi}}_U = 1(=\norm{\xi}_q)$, then 
\begin{align}\label{Normelt2}
{\norm{\xi^{ n}}}_q^2 = [n]_q! = d_n(1-q)^{-n}. 
\end{align}
\noindent$\bullet$ If $\xi_1,\cdots, \xi_n , \xi \in \CH$ with $ {\norm{\xi_j}}_U={\norm{\xi}}_U = 1$ for all $1\leq j\leq n$, then 
\begin{align}\label{normestimate}
{\norm{\xi_1 \cdots \xi_n \xi^{ m}}}_q  = {\norm{ \xi^{ m} \xi_n \cdots \xi_1}}_q
\leq C_q^{\frac{n}{2}} \sqrt{ [m]_q!}, \text{ }m\geq 0.
\end{align}

\noindent$\bullet$
We recall the following  $q$-analogue of the Pascal's identity for $q$-binomial coefficients (cf. \cite[Prop. 1.8]{BKS}):
\begin{align}\label{Pascal}
q^k \left( \begin{matrix} n\\ k \end{matrix}\right)_q + \left( \begin{matrix} n\\ k-1 \end{matrix}\right)_q = \left( \begin{matrix} n+1 \\ k \end{matrix}\right)_q,\quad  k \leq n.
\end{align}
We also recall the Wick formula from \cite{B99, nou}, and its right version which can be obtained using the $($right$)$ complex  conjugation.

\begin{Proposition}\label{wick}
Let $\xi_1,\cdots,\xi_n$ be in $\CH_\C$. Then,
$$W(\xi_1\cdots \xi_n)=\sum_{i=0}^n \sum_{\sigma\in S_{n,i}} q^{\abs{\sigma}}c_q(\xi_{\sigma(1)})\cdots
c_q(\xi_{\sigma(i)})c_q(\overline{\xi_{\sigma(i+1)}})^*\cdots
c_q(\overline{\xi_{\sigma(n)}})^*,$$
where $S_{n,i}$ is the set of permutations of $\{1,\ldots,n\}$ that are increasing on $\{1,\ldots,i\}$ and $\{i+1,\ldots,n\}$ and $|\sigma|$ is the number of inversions of $\sigma$. Further, if $\xi_1,\cdots,\xi_n\in\CH_\R^\prime + i\CH_\R^\prime$, then 
$$W_r(\xi_1\cdots \xi_n)=\sum_{i=0}^n \sum_{\sigma\in S_{n,i}} q^{\abs{{\rm flip\,}\circ\,\sigma}}c_{q,r}(\xi_{\sigma(1)})\cdots
c_{q,r}(\xi_{\sigma(i)})c_{q,r}(\overline{\xi_{\sigma(i+1)}}^{\,r})^*\cdots c_{q,r}(\overline{\xi_{\sigma(n)}}^{\,r})^*,$$
where ${\rm flip}$ is the permutation ${\rm flip}(k)=n-k$, $1\leq k\leq n$.
\end{Proposition}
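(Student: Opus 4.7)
The plan is to prove the first (left) Wick formula by induction on $n$ and then to derive the right version from it by conjugation with the flip $\jmath$ defined in \eqref{flip}.

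Write $T(\xi_1\cdots\xi_n)$ for the bounded operator on $\CF_q(\CH)$ given by the right-hand side of the left formula. A direct inspection shows $T(\xi_1\cdots\xi_n)\Omega=\xi_1\cdots\xi_n$: every term with $i<n$ is killed on $\Omega$ by its rightmost annihilation factor $c_q(\overline{\xi_{\sigma(n)}})^{*}$, so only the term $i=n$ survives, where $S_{n,n}=\{\mathrm{id}\}$ has zero inversions and $c_q(\xi_1)\cdots c_q(\xi_n)\Omega=\xi_1\cdots\xi_n$. Because $\Omega$ is separating for $M_q$, it suffices to prove $T(\xi_1\cdots\xi_n)\in M_q$, and then $W=T$ follows by uniqueness. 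I would do so inductively starting from the recursion
\[
W(\xi_1\cdots\xi_n)=W(\xi_1)\,W(\xi_2\cdots\xi_n)-\sum_{j=2}^{n}q^{j-2}\langle\overline{\xi_1},\xi_j\rangle_U\,W(\xi_2\cdots\widehat{\xi_j}\cdots\xi_n),
\]
obtained by applying $W(\xi_1)=c_q(\xi_1)+c_q(\overline{\xi_1})^{*}$ to $\xi_2\cdots\xi_n$ via \eqref{Leftmult} and using $W(x\Omega)=x$ on both sides. Expanding $W(\xi_1)\,T(\xi_2\cdots\xi_n)$ with the inductive Wick expression and pushing each $c_q(\overline{\xi_1})^{*}$ past the leading creations of a monomial through the $q$-commutation relation \eqref{Commutationrelation} produces both the uncontracted terms of $T(\xi_1\cdots\xi_n)$ and a collection of contraction terms; the former carry a factor $q^{i}$ from the $i$ commutations, which matches exactly the $i$ new inversions introduced when the index $1$ is inserted at the head of the annihilation block of an $S_{n,i}$-shuffle, while the $q$-Pascal identity \eqref{Pascal} reconciles the latter with the correction sum in the recursion.

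For the right version, I would apply the (established) left formula to $\jmath(\xi_1\cdots\xi_n)=\xi_n\cdots\xi_1$ and conjugate by $\jmath$, using $c_{q,r}(\xi)=\jmath c_q(\xi)\jmath^{*}$. This converts each left creation/annihilation into its right counterpart, while the reversal of the indices reindexes the shuffle by the flip and so produces the weight $q^{|\mathrm{flip}\circ\sigma|}$; the replacement of $\overline{\,\cdot\,}$ by $\overline{\,\cdot\,}^{\,r}$ follows because the right complex conjugation corresponds to $\jmath S_{\varphi_{q,U}}\jmath^{*}$ on $\CH_\R'+i\CH_\R'$, as noted in \S\ref{Commute}. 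The main obstacle in both halves is the bookkeeping of inversion counts through these rearrangements: the $q$-Pascal identity and the involutive character of the flip are the two combinatorial levers that make the two sums line up cleanly.
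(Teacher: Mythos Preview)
The paper does not prove this proposition; it is quoted from \cite{B99,nou}, with only the remark that the right version ``can be obtained using the right complex conjugation.'' Your inductive argument for the left formula is essentially the one found in those references and is sound. One small correction: the invocation of the $q$-Pascal identity \eqref{Pascal} is misplaced. The matching of the contraction terms with the correction sum in your recursion is a direct inversion count, not a $q$-binomial identity: if $j=\sigma'(k)$ lies in the creation block of a shuffle $\sigma'$ of $\{2,\ldots,n\}$ and $\tau$ is the shuffle obtained by deleting $j$, then $|\sigma'|-|\tau|=j-1-k$ (the number of annihilation-block indices below $j$), whence $q^{|\sigma'|}q^{k-1}=q^{|\tau|}q^{j-2}$, exactly the coefficient required.

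Your derivation of the right formula has a genuine gap. Conjugating $W(\xi_n\cdots\xi_1)\in M_q$ by $\jmath$ produces an operator in right creation/annihilation symbols, but the annihilation arguments are still the \emph{left} conjugates $\overline{\xi_k}$, and more fundamentally $\jmath M_q\jmath^*\neq M_q'$ when $(U_t)$ is nontrivial: indeed $\jmath s_q(\eta)\jmath^*=s_{q,r}(\eta)$, which lies in $M_q'$ only for $\eta\in\CH_\R'$, and $\CH_\R\neq\CH_\R'$ unless $A=1$. Hence $\jmath W(\xi_n\cdots\xi_1)\jmath^*$ is in general \emph{not} $W_r(\xi_1\cdots\xi_n)$, and you cannot ``replace'' $\overline{\,\cdot\,}$ by $\overline{\,\cdot\,}^{\,r}$ post hoc, since $c_{q,r}(\overline{\xi_k})^*$ and $c_{q,r}(\overline{\xi_k}^{\,r})^*$ are different operators. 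The clean fix is either to rerun your induction verbatim on the right side, starting from $W_r(\xi)=c_{q,r}(\xi)+c_{q,r}(\overline{\xi}^{\,r})^*$ for $\xi\in\CH_\R'+i\CH_\R'$ and the right analogue of your recursion, or to use the modular conjugation via $W_r(\eta)=JW(J\eta)J$ and Eq.~\eqref{modulartheory}, which does carry $M_q$ to $M_q'$ and correctly converts the left conjugation into the right one.
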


There is also another convenient way to write the Wick formula using crossings of partitions.

Any $\sigma\in S_{n,i}$ is completely determined by a subset $\mathfrak{J}=\{j_1<\cdots<j_i\}$ with complement $\mathfrak{J}^c=\{k_{i+1}<\cdots<k_n\}$. Then $\abs{\sigma}$ is the number of crossings of the partition $\mathfrak{J}\cup \mathfrak{J}^c$, i.e., 
$$\abs{\sigma}=c(\mathfrak{J},\mathfrak{J}^c)=\#\{(a,b) \;|\; j_a>k_b\}.$$ 
Thus,
\begin{equation}\label{wick'}
W(\xi_1\cdots \xi_n)=\sum_{i=0}^n 
\sum_{\substack{\mathfrak{J}=\{j_1<\cdots<j_i\}\\ \mathfrak{J}^c=\{k_{i+1}<\cdots<k_n\}\\ \mathfrak{J}\cup \mathfrak{J}^c=\{1,...,n\}}}
 q^{c(\mathfrak{J},\mathfrak{J}^c)}c_q(\xi_{j_1})\cdots c_q(\xi_{j_i})c_q(\overline{\xi_{k_{i+1}}})^*\cdots
c_q(\overline{\xi_{k_n}})^*.
\end{equation}


\subsection{Centralizer}\label{SectionCentralizer}

We need a convenient description of the centralizer $M_q^\varphi$ which depends on the almost periodic part of the orthogonal representation $(U_t)$. We need some preparation. For details check \cite{bik-kunal}. 

Recall that for a  strongly continuous orthogonal representation $t\mapsto U_t$, $t\in \R$,  on the real Hilbert space $\CH_\R$, there is a unique decomposition $($see \cite{Shlyakhtenko}$)$,
\begin{align*}
( \CH_\R, U_t)  =  \left(\bigoplus_{j = 1}^{N_1} (\R, \text{id} )\right) \oplus \left( \bigoplus_{k=1}^{N_2}(\CH_\R(k), U_t(k) ) \right) \oplus (\widetilde{\CH}_\R, \widetilde{U}_t),
\end{align*}
where $0\leq N_1,N_2\leq \aleph_0$, 
\begin{align*}
\CH_\R(k) = \R^2, \quad U_t(k)  = \left( \begin{matrix} \cos( t\log \lambda_k)& - \sin( t\log\lambda_k) \\
\sin( t\log\lambda_k)& \cos( t\log\lambda_k)  \end{matrix} \right), \text{ }0<\lambda_k < 1,
\end{align*}
and $(\widetilde{\CH}_\R, \widetilde{U}_t)$ corresponds to the weakly mixing component of the orthogonal representation; thus $\widetilde{\CH}_\R$ is either $0$ or infinite-dimensional.

If $N_1\neq 0$, let $e_j=0\oplus\cdots\oplus 0\oplus 1\oplus 0\oplus\cdots\oplus 0\in \bigoplus_{j=1}^{N_1}\R$, where $1$ appears at the $j$-th place for $1\leq j\leq N_1$. Similarly, if $N_2\neq 0$, let $f_k^1=0\oplus\cdots\oplus 0\oplus\left( \begin{matrix}1\\0 \end{matrix}\right)\oplus 0\oplus\cdots \oplus 0\in\bigoplus_{k=1}^{N_2}\CH_\R(k)$ and $f_k^2 =0\oplus\cdots\oplus 0\oplus\left( \begin{matrix}0\\1\end{matrix}\right)\oplus 0\oplus\cdots \oplus 0\in \bigoplus_{k=1}^{N_2}\CH_\R(k)$ be vectors with nonzero entries in the $k$-th position for $1\leq k\leq N_2$. Denote 
\begin{align*}
e^1_k= \frac{\sqrt{\lambda_k +1}}{2}(f_k^1+if_k^2) \text{ and }e^2_k=\frac{\sqrt{{\lambda}^{-1}_k+1}}{2}(f_k^1-if_k^2);
\end{align*}
thus $e_k^1,e_k^2 \in\CH_\R(k)+i\CH_\R(k)$ are orthonormal basis of $(\CH_\R(k)+i\CH_\R(k),\langle\cdot,\cdot\rangle_{U})$ for $1\leq k\leq N_2$. Fix $1\leq k\leq N_2$. The analytic generator $A(k)$ of $(U_t(k))$ is given by 
\begin{align*}
A(k)=\frac{1}{2}\left( \begin{matrix} \lambda_k + \frac{1}{\lambda_k} & i(\lambda_k - \frac{1}{\lambda_k})\\
		-i(\lambda_k -\frac{1}{ \lambda_k}) &\lambda_k + \frac{1}{\lambda_k}
\end{matrix}\right).
\end{align*}
Moreover, 
\begin{align*}
A(k) e_k^1 = \frac{1}{\lambda_k} e_k^1  \text{ and }  A(k) e_k^2 = \lambda_k e_k^2.
\end{align*}
Write $\CS=\{e_j: 1\leq j\leq N_1\} \cup \{e_k^1,e_k^2: 1\leq k\leq N_2\}$ if $N_1\neq 0$ or $N_2\neq 0$, else set $\CS=\{0\}$. If $\CS\neq \{0\}$, then $\CS$ is an orthogonal set in $(\CH_\C, \langle \cdot, \cdot\rangle_U)$ and the collection of eigenvectors of the analytic generator $A$ of $(U_t)$ is contained in $\text{span }\CS$. In the event $\CS\neq \{0\}$, 
rename the elements of the set $\CS$ as $\zeta_1,\zeta_2,\cdots$, i.e., $\CS=\{\zeta_i:1\leq i\leq N_1+2N_2\}$, whence $A\zeta_l=\beta_l\zeta_l$ with $\beta_l\in\mathcal{E}_A$ for all $l$, where $\mathcal{E}_A=\{1\}\cup \{\lambda_k:1\leq k\leq N_2\}\cup \{\frac{1}{\lambda_k}:1\leq k\leq N_2\}$. It is to be understood that when $N_1=\aleph_0$ $($resp. $N_2=\aleph_0)$, the constraints $j\leq N_1$ and $i\leq N_1+2N_2$ $($resp. $k\leq N_2$ and $i\leq N_1+2N_2)$ $($in defining $\CS$ and $\mathcal{E}_A)$ is replaced by $j<N_1$ and $i<N_1+2N_2$ $($resp.  $k<N_2$ and $i<N_1+2N_2)$. 

Now we are ready to write down the description of $M_q^\varphi$. 

\begin{Theorem}\cite[Thm. 3.4]{bik-kunal}\label{CentraliserDescribe}
Let 
\begin{align*}
\CW_0 =\begin{cases}& \{\zeta_{i_1}\cdots \zeta_{i_n}:\zeta_{i_j} \in \CS, 1\leq i_j\leq  N_1+2N_2, \prod_{j=1}^n\beta_{i_j}= 1, n\in \N \},\\
&\indent\indent\indent\indent\indent\indent\indent\indent\indent \text{ if }\max(N_1,N_2)<\infty;\\
&\{\zeta_{i_1}\cdots\zeta_{i_n}:\zeta_{i_j} \in \CS, 1\leq i_j <N_1+2N_2, \prod_{j=1}^n\beta_{i_j}= 1, n\in \N \},\\
&\indent\indent\indent\indent\indent\indent\indent\indent\indent \text{ if }\max(N_1,N_2)=\infty.\\
\end{cases}
\end{align*} 
Let $\CW=\C\Omega\oplus \overline{\text{span }\CW_0}^{\norm{\cdot}_q}$. Then, $M_q^\varphi\Omega=\CW\cap M_q\Omega$.
\end{Theorem}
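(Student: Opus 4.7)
The plan is to translate the centraliser condition via modular theory into a spectral condition on the $q$-Fock space, then identify the relevant fixed subspace using the decomposition of $(\CH_\R, U_t)$ into almost periodic and weakly mixing parts.

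First, since $\sigma_{-t}^{\varphi} = \Ad(\CF(U_t))$ and $\CF(U_t)\Omega = \Omega$, one has $\sigma_{-t}^{\varphi}(x)\Omega = \CF(U_t)\,x\Omega$ for every $x\in M_q$; as $\Omega$ is separating for $M_q$, it follows that $x \in M_q^{\varphi}$ if and only if $x\Omega$ is $\CF(U_t)$-invariant for all $t\in\R$. Since $\CF(U_t) = \mathrm{id}\oplus \bigoplus_{n\geq 1}U_t^{\otimes n}$, both inclusions of the theorem reduce to the claim that, for each $n\geq 1$, the $U_t^{\otimes n}$-fixed subspace $K_n$ of $\CH^{\otimes_q n}$ equals the $\norm{\cdot}_q$-closure of $\mathrm{span}\{\zeta_{i_1}\cdots\zeta_{i_n}:\zeta_{i_j}\in \CS,\ \prod_j \beta_{i_j}=1\}$ (the index restriction being vacuous in the infinite case).

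To compute $K_n$, decompose $\CH = \CH_{ap}\oplus \widetilde{\CH}$ as $U_t$-invariant orthogonal subspaces, where $\CH_{ap}$ is the closed $\langle\cdot,\cdot\rangle_U$-span of $\CS$ (the almost periodic part) and $\widetilde{\CH}$ is the completion of $\widetilde{\CH}_\R+i\widetilde{\CH}_\R$ (the weakly mixing part). Because the $q$-norm on $\CH^{\otimes n}$ is equivalent to the standard Fock norm for $\abs{q}<1$ (the $q$-symmetriser is bounded and bounded below), it suffices to compute the fixed subspace with respect to the standard tensor inner product. This yields an orthogonal decomposition $\CH^{\otimes n}=\bigoplus_{\epsilon\in\{ap,wm\}^n} V_\epsilon$, with each $V_\epsilon$ being $U_t^{\otimes n}$-invariant. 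On the fully almost-periodic summand $\CH_{ap}^{\otimes n}$, the family $\{\zeta_{i_1}\cdots\zeta_{i_n}\}$ jointly diagonalises $U_t^{\otimes n}$ with eigenvalues $\bigl(\prod_j\beta_{i_j}\bigr)^{it}$, so the fixed subspace is precisely the closed span of those tensors for which $\prod_j\beta_{i_j}=1$.

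The main technical point is to rule out fixed vectors in $V_\epsilon$ whenever at least one slot is weakly mixing. After permuting the tensor factors, the action on such $V_\epsilon$ is conjugate to $(U_t|_{\CH_{ap}})^{\otimes k}\otimes \widetilde{U}_t^{\otimes(n-k)}$ with $n-k\geq 1$. Since $\widetilde{U}_t$ is weakly mixing, all positive tensor powers $\widetilde{U}_t^{\otimes m}$ remain weakly mixing, and tensoring a weakly mixing $\R$-action with any unitary $\R$-action preserves weak mixing (the resulting spectral measure is a convolution of a countable measure with an atomless one, hence atomless, ruling out invariant vectors). This confines all fixed vectors to the fully almost-periodic summand, identifies $K_n$ and thereby $\CW$, and completes the proof using the separating property of $\Omega$ for $M_q$.
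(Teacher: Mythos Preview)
The paper does not give its own proof of this statement; it is quoted verbatim from \cite{bik-kunal} (Theorem~3.4 there) and used as a black box. So there is nothing in the present paper to compare your argument against.

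That said, your argument is correct and is essentially the natural proof. The reduction $x\in M_q^\varphi \iff x\Omega$ is $\CF(U_t)$-fixed is exactly how one proceeds (using that $\Omega$ is separating), and the identification of the fixed subspace of $U_t^{\otimes n}$ via the almost periodic/weakly mixing splitting is the standard spectral-theoretic computation. Two small points worth making explicit for a self-contained write-up: (i) the passage from the $q$-norm to the free Fock norm is legitimate because the $q$-symmetriser on $\CH^{\otimes n}$ is bounded above and below, so the underlying vector space and the operator $U_t^{\otimes n}$ are literally the same and only the inner product changes---hence the fixed subspace is unchanged as a set; (ii) your convolution remark for ruling out fixed vectors in mixed summands is correct but applies a priori to elementary tensors---to upgrade to arbitrary vectors one notes that if the spectral projection of $U_t^{\otimes n}$ at a point were nonzero it would not annihilate some elementary tensor $\xi\otimes\eta$, whose spectral measure $\mu_\xi*\nu_\eta$ is atomless since $\mu_\xi$ is, a contradiction. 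With these clarifications your proof is complete.
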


\begin{Remark}\label{CentralizerRemark}
If $\mathcal{E}_A=\{1\}$, then $M_q^{\varphi}$ is isomorphic to the $q$-Gaussian von Neumann algebra of Bo$\overset{.}{\text{z}}$ejko and Speicher \cite{BS,BKS}. Thus, in this case, if $1$ is eigenvalue of multiplicity more than or equal to $2$, then $M_q^{\varphi}$ is a factor by \cite{ER}.
\end{Remark}

\section{Technical Analysis}\label{mainsec}

The factoriality of $M_q$ is decided in several steps. When $\dim(\CH_\R)=2$, then $(U_t)$ is not ergodic if and only if it is trivial and in that case $M_q^\varphi=M_q$ is a $\rm{II}_1$ factor $($see \cite{bik-kunal,ER}$)$. The most important and difficult case is the one when $\dim(\CH_\R)=2$ and $(U_t)$ is ergodic. The difficulty arises in lack of room to perform meaningful calculations. 

In this section, we lay out the technical analysis that will lead to the factoriality of $M_q$. This section is divided into two subsections. In \S\ref{casebig2} we deal with the case when $\dim(\CH_\R)=2$ and $(U_t)$ is ergodic and in \S\ref{casebig3} we prepare the machinery that will help deal with all the cases.

\subsection{$\mathbf{\Gamma(\R^2, U_t )^{\prime\prime}}$ with small $\lambda$ and $(U_t)$ ergodic}\label{casebig2}

Following the discussion in \S\ref{SectionCentralizer}, it follows that $N_1=0$ and $N_2=1$. Further, there exists a $\lambda\in (0,1)$ such that 
\begin{align*}
U_t  = \left( \begin{matrix} \cos( t\log \lambda)& - \sin( t\log\lambda) \\
\sin( t\log\lambda)& \cos( t\log\lambda)  \end{matrix} \right).
\end{align*}
Reducing notation, write $e_1:=e_1^1=\frac{\sqrt{\lambda+1}}{2}\begin{pmatrix}1\\i\end{pmatrix}$ and $e_2:= e_1^2=\frac{\sqrt{\lambda^{-1}+1}}{2}\begin{pmatrix}1\\-i\end{pmatrix}$ so that $Ae_1 = \frac{1}{\lambda} e_1  \text{ and }  A e_2 = \lambda e_2$ and $\{ e_1, e_2\}$ forms a orthonormal basis of $(\C^2, \langle \cdot, \cdot \rangle_U)$.

Now we set $ e = \lambda^{-\frac{1}{4}}e_1$. Then, $\bar{e}= \lambda^{\frac{1}{4}}e_2$. Note that   $\norm{e}_U = \lambda^{-\frac{1}{4}}$ and  $\norm{\bar{e}}_U = \lambda^{\frac{1}{4}}$. Note that in this context $ \CH = \C^2$ considered with respect to the inner product $ \langle \cdot, \cdot \rangle_U$. We also have the formulas  $\bar{e}^{\,r}=\lambda^{-1} \bar e$ and $\overline{\,\bar{e}\,}^{\,r}=\lambda e$.


We begin with some useful lemmas. 
\begin{Lemma}\label{Adjpowers}
For $m\geq 0$ and $\xi\in \CH$, we have  
\begin{equation*}
c_q(\xi)^{*n}(\xi^m)=\begin{cases}\frac{[m]_q!}{[m-n]_q!} \norm{\xi}_q^{2n} \xi^{m-n}, \quad &n\leq m;\\
0, \quad  &n>m.\end{cases} 
\end{equation*} 
\end{Lemma}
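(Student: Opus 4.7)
The claim is a clean recursion, so the natural strategy is a two-layer induction: first handle the case $n=1$ by direct application of the definition of $c_q(\xi)^*$, then pass to general $n$ by iterating. I would organize the proof as follows.

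First, I would treat the base case $n=1$ by unwinding the definition \eqref{Leftmult}. Applying $c_q(\xi)^*$ to $\xi^m = \xi\otimes\cdots\otimes\xi$ and using that all slots contain the same vector $\xi$, the defining formula collapses to
\[
c_q(\xi)^*(\xi^m)=\sum_{i=1}^m q^{i-1}\langle\xi,\xi\rangle_U\,\xi^{m-1}=[m]_q\,\|\xi\|_U^2\,\xi^{m-1}.
\]
Since $\|\cdot\|_U=\|\cdot\|_q$ on $\CH^{\otimes_q 1}=\CH$, this is exactly $\frac{[m]_q!}{[m-1]_q!}\|\xi\|_q^2\,\xi^{m-1}$, which is the asserted formula at $n=1$.

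Next, for $1\le n\le m$, I would induct on $n$. Assuming the formula at step $n$, I apply $c_q(\xi)^*$ one more time and invoke the base case with $m$ replaced by $m-n$:
\[
c_q(\xi)^{*(n+1)}(\xi^m)
=\frac{[m]_q!}{[m-n]_q!}\|\xi\|_q^{2n}\,c_q(\xi)^*(\xi^{m-n})
=\frac{[m]_q!}{[m-n]_q!}\|\xi\|_q^{2n}\cdot[m-n]_q\|\xi\|_q^2\,\xi^{m-n-1},
\]
which simplifies, via $[m-n]_q\cdot[m-n-1]_q!=[m-n]_q!$, to $\frac{[m]_q!}{[m-n-1]_q!}\|\xi\|_q^{2(n+1)}\,\xi^{m-n-1}$, as required.

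Finally, for the case $n>m$, I would first push the induction up to $n=m$, where the recursion yields $c_q(\xi)^{*m}(\xi^m)=[m]_q!\,\|\xi\|_q^{2m}\,\Omega$, and then observe that $c_q(\xi)^*\Omega=0$ by definition, so any further application of $c_q(\xi)^*$ kills the vector. With the convention $[0]_q!=1$, the two cases join seamlessly. I do not anticipate any real obstacle here; the only mild subtlety is keeping the conventions for $[0]_q!$ and $\|\xi\|_U=\|\xi\|_q$ on single-particle space consistent with the statement, so that the $n=m$ endpoint matches both branches of the piecewise formula.
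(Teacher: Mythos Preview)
Your proof is correct and is essentially the same approach as the paper's: the paper simply states that the result follows directly from the defining formula \eqref{Leftmult} for $c_q(\xi)^*$, and your two-layer induction is exactly the natural unwinding of that remark.
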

\begin{proof}
The proof follows directly from Eq. \eqref{Leftmult}.
\end{proof}

\begin{Lemma}\label{wen}
$	W(e^n) = W(e)^n =\displaystyle \sum_{k=0}^{n}  \left( \begin{matrix} n\\ k \end{matrix}\right)_q c_q(e)^{n-k} c_q(\bar{e})^{* k}$ for $n\geq 1$.
\end{Lemma}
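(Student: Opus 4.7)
The plan has two steps: first establish the operator identity $W(e^n)=W(e)^n$, then expand $W(e)^n$ via a $q$-binomial induction.

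\textbf{Step 1: $W(e^n)=W(e)^n$.} Since $e\in\CH_\R+i\CH_\R$, applying the Wick formula \eqref{wick'} with $n=1$ gives $W(e)=c_q(e)+c_q(\bar e)^\ast\in M_q$. The crucial observation is that
\[
\langle \bar e, e\rangle_U \;=\; \langle \lambda^{1/4}e_2,\lambda^{-1/4}e_1\rangle_U \;=\; \langle e_2,e_1\rangle_U \;=\; 0,
\]
because $\{e_1,e_2\}$ is orthonormal in $(\C^2,\langle\cdot,\cdot\rangle_U)$. Using the formula \eqref{Leftmult} for the annihilation operator, this immediately gives $c_q(\bar e)^\ast e^n=0$ for every $n\geq 0$. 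Hence
\[
W(e)e^n \;=\; c_q(e)e^n + c_q(\bar e)^\ast e^n \;=\; e^{n+1},
\]
and by induction $W(e)^n\Omega = e^n = W(e^n)\Omega$. Since $W(e)^n$ and $W(e^n)$ both lie in $M_q$ and $\Omega$ is separating, we conclude $W(e)^n = W(e^n)$.

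\textbf{Step 2: $q$-binomial expansion.} I will prove by induction on $n$ that
\[
W(e)^n \;=\; \sum_{k=0}^n \binom{n}{k}_q c_q(e)^{n-k} c_q(\bar e)^{\ast k}.
\]
The base case $n=1$ is the identity $W(e)=c_q(e)+c_q(\bar e)^\ast$. For the inductive step, the $q$-commutation relation \eqref{Commutationrelation} together with $\langle \bar e,e\rangle_U=0$ yields $c_q(\bar e)^\ast c_q(e)=q\,c_q(e)c_q(\bar e)^\ast$, and iterating gives
\[
c_q(\bar e)^\ast c_q(e)^m \;=\; q^m c_q(e)^m c_q(\bar e)^\ast.
\]
Multiplying the inductive hypothesis on the left by $W(e)=c_q(e)+c_q(\bar e)^\ast$ and applying this commutation rule, then reindexing, the coefficient of $c_q(e)^{n+1-k}c_q(\bar e)^{\ast k}$ becomes
\[
\binom{n}{k}_q + q^{n+1-k}\binom{n}{k-1}_q,
\]
which equals $\binom{n+1}{k}_q$ by the Pascal identity \eqref{Pascal} (in its second form, obtained from the stated one via the symmetry $\binom{n}{k}_q = \binom{n}{n-k}_q$). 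This closes the induction.

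\textbf{Main obstacle.} There is no real obstacle; the entire argument is driven by the single algebraic fact $\langle \bar e,e\rangle_U=0$, which simultaneously ensures $c_q(\bar e)^\ast$ kills powers of $e$ (needed for Step 1) and makes $c_q(e),c_q(\bar e)^\ast$ into a clean $q$-Weyl pair (needed for Step 2). The only mildly delicate point is matching the form of Pascal's identity used in the paper with the one naturally produced by the induction, which is handled by the standard symmetry of the Gaussian binomial coefficients.
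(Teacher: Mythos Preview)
Your proof is correct and follows essentially the same approach as the paper's (omitted) argument: induction driven by the $q$-commutation relation $c_q(\bar e)^*c_q(e)=q\,c_q(e)c_q(\bar e)^*$ (coming from $\langle\bar e,e\rangle_U=0$) together with the $q$-Pascal identity. The only cosmetic differences are that you multiply the inductive hypothesis on the left rather than on the right (so you land on the alternative Pascal form $\binom{n}{k}_q+q^{n+1-k}\binom{n}{k-1}_q=\binom{n+1}{k}_q$, which you correctly derive from the stated one via symmetry), and that you spell out the identity $W(e^n)=W(e)^n$ explicitly via the separating vector $\Omega$.
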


\begin{proof}
The proof is straightforward and follows by using Eq. \eqref{Pascal} and induction. So we omit the proof.  
\end{proof}

The next lemma initiates the interplay between the parameters $\lambda$ and $q$ in the case $\dim(\CH_\R)=2$, which will force constraints to the factoriality of $M_q$. 

\begin{Lemma}\label{bnd1}
Let $D(q)=\sup_n d_n$. With the above notations, the following are true:
\begin{enumerate}
\item[$(i)$] $\{\lambda^{\frac{n}{4}} (1-q)^{\frac{n}{2}} c_q(e)^n \}_{n\geq 1}$, $\{\lambda^{-\frac{n}{4}} (1-q)^{\frac{n}{2}} c_q(\bar e)^n\}_{n\geq 1} $ are bounded $($actually by $1$ if $q\geq 0$ and by $\sqrt{C_qD(q)}$ if $q<0)$.
\item[$(ii)$] $ \{\lambda^{\frac{n}{4}} (1-q)^{\frac{n}{2}} W(e^{ n} )\}_{n\geq 1} $ is bounded.
\end{enumerate}
\end{Lemma}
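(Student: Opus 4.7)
The plan for part (i) is to treat the two signs of $q$ separately. When $q\geq 0$, submultiplicativity of the operator norm together with \eqref{NormCreation} immediately gives $\|c_q(e)^n\|\leq \|c_q(e)\|^n = \lambda^{-n/4}(1-q)^{-n/2}$ and, symmetrically, $\|c_q(\bar e)^n\|\leq \lambda^{n/4}(1-q)^{-n/2}$, so the rescaled operators have norm at most $1$. For $q<0$ this crude estimate is too weak because the factor $(1-q)^{n/2}$ no longer cancels the blow-up coming from $\|c_q(\xi)\|=\|\xi\|_U$. To circumvent this, I would exploit the notational convention $c_q(e)^n=c_q(e^n)$ with $e^n\in\CH^{\otimes_q n}$ and apply the sharper Fock-space estimate $\|c_q(\eta)\|\leq \sqrt{C_q}\|\eta\|_q$ from \eqref{Normelt}. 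Combining this with $\|e^n\|_q^2=\|e\|_U^{2n}[n]_q!=\lambda^{-n/2}d_n(1-q)^{-n}$, which follows from \eqref{Normelt2} applied to the unit vector $\lambda^{1/4}e$, produces $\|c_q(e)^n\|\leq \sqrt{C_q d_n}\,\lambda^{-n/4}(1-q)^{-n/2}\leq \sqrt{C_q D(q)}\,\lambda^{-n/4}(1-q)^{-n/2}$, and the $\bar e$ version is handled identically.

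For part (ii), I would expand $W(e^n)$ via Lemma~\ref{wen} as
\begin{equation*}
W(e^n)=\sum_{k=0}^n \binom{n}{k}_q c_q(e)^{n-k}c_q(\bar e)^{*k},
\end{equation*}
and then insert the bound from part (i) into each summand. Using the identity $\binom{n}{k}_q=d_n/(d_k d_{n-k})$ together with the uniform bounds $d_j,1/d_j\leq C_q$ recorded just after the notation list, the product $\binom{n}{k}_q\|c_q(e)^{n-k}\|\|c_q(\bar e)^{*k}\|$ is dominated by a single $n$-independent constant $K_q$ times $\lambda^{(2k-n)/4}(1-q)^{-n/2}$. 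Summing in $k$ factors out $\lambda^{-n/4}$ and leaves the geometric sum $\sum_{k=0}^n \sqrt{\lambda}^{\,k}\leq (1-\sqrt\lambda)^{-1}$, which converges precisely because $\lambda<1$. Multiplying the resulting estimate by $\lambda^{n/4}(1-q)^{n/2}$ eliminates the diverging prefactors and yields a bound independent of $n$.

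The main subtlety lies in the $q<0$ half of part (i): naive submultiplicativity of the operator norm on iterated creation operators is insufficient, and one must pass through the tensor-viewpoint identification $c_q(e)^n=c_q(e^n)$ to invoke the sharper estimate \eqref{Normelt}. Beyond that, the argument is bookkeeping with $q$-binomial coefficients and a geometric series whose convergence is guaranteed by $\lambda<1$.
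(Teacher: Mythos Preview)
Your proof is correct and follows essentially the same approach as the paper's. The only cosmetic difference is that in part~(ii) you invoke part~(i) to bound $\|c_q(e)^{n-k}\|$ and $\|c_q(\bar e)^{*k}\|$, whereas the paper re-derives those estimates directly from \eqref{Normelt} and \eqref{Normelt2}; the resulting computation with the $q$-binomial coefficients and the geometric series in $\sqrt{\lambda}$ is identical.
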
	

\begin{proof}
\noindent $(i)$. First assume $0\leq q<1$. Then, from Eq. \eqref{NormCreation} it follows that 
\begin{align*}
\norm{c_q(e)^n} \leq \norm{c_q(e)}^n = \norm{e}_q^n (1-q)^{-\frac{n}{2}} = \lambda^{-\frac{n}{4}}(1-q)^{-\frac{n}{2}}.
\end{align*}
If $-1<q<0$, from Eq. \eqref{Normelt} and Eq. \eqref{Normelt2} it follows that 
\begin{align*}
\norm{c_q(e)^n}&=\norm{c_q(e^n)}\leq \sqrt{C_q} \norm{e^n}_q\\
& = \sqrt{C_q}\norm{e}_q^n \sqrt{[n]_q!}\\
&=\sqrt{C_q}\lambda^{-\frac{n}{4}} \sqrt{[n]_q!}\\
&=\sqrt{C_qd_n}\lambda^{-\frac{n}{4}} (1-q)^{-\frac{n}{2}}\leq \sqrt{C_qD(q)}\lambda^{-\frac{n}{4}} (1-q)^{-\frac{n}{2}}. 
\end{align*}
The argument for $\bar e$ is similar. Thus $(i)$ follows.

\noindent $(ii)$. We just use  Lemma \ref{wen} and the triangle inequality. Note that 
\begin{eqnarray*}
\| W(e^n)\| &\leq &\sum_{k=0}^{n}  \frac{d_n}{d_{n-k}d_k}  \|c_q(e)^{n-k}\|. \|c_q(\bar{e})^{* k}\| \quad (\text{use Eq. \eqref{Normelt2}})\\
& \leq & C_q \sum_{k=0}^{n} \frac{d_n}{d_{n-k}d_k} \|e^{n-k}\|_q.\|\bar{e}^{k}\|_q \quad (\text{use Eq. \eqref{Normelt}})\\
& = &C_q \sum_{k=0}^n  \frac{d_n}{d_{n-k}d_k} \|e\|_q^{n-k} \sqrt{[n-k]_q!}
\|\bar e\|_q^{k} \sqrt{[k]_q!}\quad (\text{use Eq. \eqref{Normelt2}})\\
&= & C_q   \sum_{k=0}^n  \frac{d_n}{\sqrt{d_{n-k}d_k}} \lambda^{(2k-n)/4}
(1-q)^{-n/2} \quad (\text{use Eq. \eqref{Normelt2}}).
\end{eqnarray*}
The result follows as $(d_i)$ is bounded from above and below and $\lambda<1$.
\end{proof}

\begin{Lemma}\label{Tn}
Let $T_n=(1-q)^n\lambda^{n/2}c_q(e)^{* n } c_q(e)^n$ for $n\geq 1$, then $T_n$ is norm convergent to some $T\in C^*\langle c_q(e)\rangle$.

Moreover, if  $q\geq0$ then $ d_\infty \leq  T\leq 1$ and if $q\leq0$ then $d_\infty/(1-q) \leq T \leq 1-q$.
\end{Lemma}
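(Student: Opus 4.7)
My plan is to produce a closed-form normal-ordered expansion of $c_q(e)^{*n}c_q(e)^n$ and then pass to the norm limit. First, I would iterate the $q$-commutation relation $c_q(e)^*c_q(e)=qc_q(e)c_q(e)^*+\lambda^{-1/2}$. An easy induction yields the one-step rule $c_q(e)^*c_q(e)^n=q^nc_q(e)^nc_q(e)^*+\lambda^{-1/2}[n]_q c_q(e)^{n-1}$, and iterating this across the $n$ annihilation operators on the left (with the recursion closing via the $q$-Pascal identity \eqref{Pascal}) produces
\[c_q(e)^{*n}c_q(e)^n=\sum_{l=0}^n q^{l^2}\binom{n}{l}_q\frac{[n]_q!}{[l]_q!}\lambda^{-(n-l)/2}c_q(e)^lc_q(e)^{*l}.\]
Multiplying by $(1-q)^n\lambda^{n/2}$ and converting $q$-factorials via $d_k=(1-q)^k[k]_q!$ recasts this as
\[T_n=\sum_{l=0}^n\alpha_{n,l}\,c_q(e)^lc_q(e)^{*l},\qquad \alpha_{n,l}=(1-q)^lq^{l^2}\lambda^{l/2}\frac{d_n^2}{d_l^2 d_{n-l}}.\]

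Second, I would establish norm convergence. Lemma \ref{bnd1}(i) gives uniform bounds on $\lambda^{l/4}(1-q)^{l/2}c_q(e)^l$ in $l$, and combined with $d_k,1/d_k\leq C_q$ this yields a uniform estimate $\|\alpha_{n,l}\,c_q(e)^lc_q(e)^{*l}\|\leq K|q|^{l^2}$ for a constant $K=K(q,\lambda)$ independent of $n$. The theta-type series $\sum_l|q|^{l^2}$ converges, making the partial sums Cauchy in operator norm uniformly in $n$; for each fixed $l$, $d_n,d_{n-l}\to d_\infty$ forces $\alpha_{n,l}\to\alpha_{\infty,l}:=(1-q)^lq^{l^2}\lambda^{l/2}d_\infty/d_l^2$. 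A dominated-convergence argument in the Banach space $\mathbf{B}(\CF_q(\CH))$ then delivers $T_n\to T:=\sum_{l\geq 0}\alpha_{\infty,l}\,c_q(e)^lc_q(e)^{*l}$ in operator norm, and this limit lies in $C^*\langle c_q(e)\rangle$ as a norm limit of polynomials in $c_q(e),c_q(e)^*$.

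Third, the bounds on $T$ are read off from this expansion. When $q\geq 0$, every $\alpha_{\infty,l}$ is nonnegative, the $l=0$ summand equals $d_\infty I$, and hence $T\geq d_\infty I$; for the upper bound, \eqref{NormCreation} gives $\|T_n\|\leq(1-q)^n\lambda^{n/2}\|c_q(e)\|^{2n}=1$, so $\|T\|\leq 1$ and $T\leq I$. When $q\leq 0$, I would start from the identity $T_1=(1-q)(I+q\lambda^{1/2}c_q(e)c_q(e)^*)\leq(1-q)I$ and propagate through the recursion $T_{n+1}=(1-q)\lambda^{1/2}c_q(e)^*T_nc_q(e)$ to the limit, while the lower bound $T\geq \frac{d_\infty}{1-q}I$ comes from pairing adjacent-in-$l$ summands in the series for $T$ so as to dominate the negative odd-$l$ contributions by the positive ones.

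The main obstacle will be the sharp bounds in the case $q\leq 0$: since $q^{l^2}$ is negative for odd $l$ and positive for even $l$, the one-line positivity argument of the $q\geq 0$ case collapses and one must track cancellations between consecutive $l$'s. The precise factors $1-q$ and $d_\infty/(1-q)$ are dictated by the spectrum of $T$ on the invariant subspace $\overline{\mathrm{span}}\{e^m:m\geq 0\}$, where $T$ reduces to $\prod_{i\geq 1}(1-q^i(1-(1-q)\lambda^{1/2}c_q(e)c_q(e)^*))$; one has to verify that no other cyclic subspace for $c_q(e),c_q(e)^*$ produces a wider spectrum.
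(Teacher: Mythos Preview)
Your normal-ordered expansion and the ensuing dominated-convergence argument are correct and give a clean, self-contained route to the norm convergence $T_n\to T\in C^*\langle c_q(e)\rangle$; this is genuinely different from the paper, which instead invokes a $C^*$-isomorphism (\cite[Thm.~3.6]{W17}) identifying $C^*\langle c_q(e)\rangle$ acting on $\CF_q(\CH)$ with its copy on $\CF_q(\C e)$, where $T_n$ is diagonal with eigenvalues $\prod_{j=1}^n(1-q^{k+j})$ on $e^k$ and the limit is read off directly. Your argument for the $q\geq 0$ bounds is also correct and in fact more elementary than the paper's, since it avoids the isomorphism altogether.

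The genuine gap is in the $q<0$ bounds. Your recursion argument for the upper bound does not close: from $T_n\leq M\,I$ and $T_{n+1}=(1-q)\lambda^{1/2}c_q(e)^*T_nc_q(e)$ one only gets $T_{n+1}\leq M(1-q)\lambda^{1/2}c_q(e)^*c_q(e)\leq M(1-q)I$ (using $\|c_q(e)\|^2=\lambda^{-1/2}$ for $q\leq 0$), so the bound inflates by the factor $1-q>1$ at every step, and the fixed-point equation $T=(1-q)\lambda^{1/2}c_q(e)^*Tc_q(e)$ has the same defect. The triangle inequality applied termwise to your expansion likewise only yields $\|T\|\lesssim C_q^4 D(q)\sum_{l}|q|^{l^2}$, not $1-q$. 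Your pairing sketch for the lower bound is too vague to produce the sharp constant $d_\infty/(1-q)$. You correctly diagnose at the end that the precise constants are dictated by the spectrum of $T$ on $\overline{\mathrm{span}}\{e^m:m\geq 0\}$, where indeed $T e^k=(d_\infty/d_k)e^k$; but the assertion that ``no other cyclic subspace produces a wider spectrum'' is the nontrivial input, and this is exactly what the paper imports via the $C^*$-isomorphism above. Without that result (or an equivalent reduction), the sharp $q<0$ inequalities $d_\infty/(1-q)\leq T\leq 1-q$ remain unestablished in your proposal.
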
	

\begin{proof}
Let $\mathcal{K}=\C e$. Then by \cite[Thm. 3.6]{W17} (see also \cite{W18}, \cite{JSW} Example 2), the $C^*$-algebras generated by the
creation operators associated to $e$ in $\mathcal F_q(\mathcal
H)$ and in $\mathcal F_q(\mathcal K)$ are isomorphic. Thus, we only need to prove the result in $\mathbf
B(\mathcal F_q(\mathcal K))$; but $\mathcal F_q(\mathcal K)= \oplus_{k=0}^\infty
\C e^{k}$. With respect to this decomposition, $T_n$ is a
diagonal operator with 
\begin{align*}
T_n( e^{ k})&= (1-q)^n\lambda^{n/2} c_q(e)^{* n } (e^{(n+k)})
= (1-q)^n \frac{[n+k]_q!}{[k]_q!} e^{k}\\
&= \Big(\prod_{j=1}^n (1-q^{k+j})\Big)e^{k},\quad \forall k\geq 0.
\end{align*}
Let $T\in\mathbf{B}(\mathcal F_q(\mathcal K))$ be the diagonal operator with eigenvalues $\frac{d_\infty}{d_k}=\prod_{j=1}^\infty (1-q^{k+j})$ and associated eigenvectors $e^{k}$ for all $k\geq 0$. Then, 
\begin{align*}
\| T_n-T\|\leq K\sup_{k} |1- \prod_{j=n+1}^\infty  (1-q^{k+j})| \to 0\text{ as }n\to \infty, 
\end{align*}
where $K$ is a constant (independent of $n,k$). Consequently, $0\leq T\in C^*\langle c_q(e)\rangle$. 

The estimations for $T$ are clear from its spectrum if $q\geq 0$. When $q<0$, we have to estimate $\alpha_q=\inf_{k}\prod_{j=1}^\infty (1-q^{k+j})$ and $\beta_q=\sup_{k}\prod_{j=1}^\infty (1-q^{k+j})$. Note that 
for all $m\geq0$, we have that $(1-q^{2m})(1-q^{2m+1})\leq 1$. Thus, it follows that $\alpha_q=\prod_{j=2}^\infty (1-q^{j})=d_\infty/(1-q)$.  In the same way since $\prod_{j=2m}^\infty (1-q^{k+j})\leq 1$, we must have $\beta_q\leq 1-q$.
\end{proof}

\begin{Remark}\label{estofT}
When $q<0$, if moreover $|q| (1+|q|)\leq 1$, we have that for all $m\geq 0$, 
$(1-q^{2m+1})(1-q^{2m+2})\geq 1$. Thus, it follows that $\beta_q=\sup_{k} d_\infty/d_k=d_\infty$ in that case. 
\end{Remark}

Now, consider the following operator $$ S_n = (1-q)^n \lambda^{ \frac{n}{2} }  {c_q(e)}^{ *n} W(e^{n}).$$
Also note that, for each fixed $k$, as $n$ goes to infinity, $\left( \begin{matrix} n\\ k \end{matrix}\right)_q=\frac{d_{n}}{d_{n-k}d_k}$ converges to  $c_k=d_k^{-1}\leq C_q$.

\begin{Lemma}\label{s}
The sequence $S_n$ converges in norm to $$\displaystyle{S_\infty =\sum_{k=0}^\infty c_k (1-q)^k\lambda^{k/2} c_q(e)^{*k}Tc_q(\overline e)^{k*}}.$$ Moreover, if $q>0$ and $\lambda<(1+\frac{C_q}{d_\infty})^{-2}$ or if $q<0$ and $\lambda<(1+\frac{C_q^2D(q)(1-q)^2}{d_\infty})^{-2}$, the operator $S_\infty$ is invertible.
\end{Lemma}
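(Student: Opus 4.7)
The plan is to extract $T_{n-k}$ from within $S_n$ using the identity $c_q(e)^{*n}=c_q(e)^{*k}c_q(e)^{*(n-k)}$ together with the defining relation $c_q(e)^{*(n-k)}c_q(e)^{n-k}=(1-q)^{-(n-k)}\lambda^{-(n-k)/2}T_{n-k}$. Substituting the expansion of $W(e^n)$ from Lemma \ref{wen} into $S_n=(1-q)^n\lambda^{n/2}c_q(e)^{*n}W(e^n)$ produces the rewriting
\[
S_n=\sum_{k=0}^n \binom{n}{k}_q (1-q)^k \lambda^{k/2}\, c_q(e)^{*k}\, T_{n-k}\, c_q(\bar e)^{*k}.
\]
For each fixed $k$, the coefficient $\binom{n}{k}_q=d_n/(d_{n-k}d_k)$ tends to $c_k=1/d_k$ as $n\to\infty$ and, by Lemma \ref{Tn}, $T_{n-k}\to T$ in norm, so the $k$-th summand already converges in operator norm to the $k$-th term of the proposed limit $S_\infty$.

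To upgrade this term-wise convergence to norm convergence of the full series, I will apply Lemma \ref{bnd1}(i) in the form
\[
(1-q)^k \lambda^{k/2}\, \|c_q(e)^{*k}\|\, \|c_q(\bar e)^{*k}\| \leq K^2 \lambda^{k/2},
\]
with $K=1$ when $q\geq 0$ and $K=\sqrt{C_q D(q)}$ when $q<0$. Combined with the uniform bounds $\binom{n}{k}_q\leq C_q^3$, $c_k\leq C_q$, and $\sup_m\|T_m\|<\infty$, the $k$-th summand in each of $S_n$ and $S_\infty$ is dominated in operator norm by a constant multiple of $\lambda^{k/2}$, uniformly in $n$. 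A standard dominated-convergence argument for series, splitting at a sufficiently large index $N$, then yields $\|S_n-S_\infty\|\to 0$, and in particular $S_\infty$ is a well-defined bounded operator.

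For the invertibility claim, $T$ is itself boundedly invertible by Lemma \ref{Tn}, with $\|T^{-1}\|\leq 1/d_\infty$ when $q\geq 0$ and $\|T^{-1}\|\leq (1-q)/d_\infty$ when $q\leq 0$. Setting $E:=S_\infty-T$ and factoring $S_\infty=T(I+T^{-1}E)$, a Neumann series argument reduces invertibility of $S_\infty$ to the estimate $\|T^{-1}\|\cdot\|E\|<1$. The uniform bound above, together with $\|T\|\leq 1$ (resp.\ $1-q$) for $q\geq 0$ (resp.\ $q\leq 0$), gives
\[
\|E\|\leq K^2\, \|T\|\, C_q\, \frac{\sqrt{\lambda}}{1-\sqrt{\lambda}},
\]
and elementary algebra shows that the requirement $\|T^{-1}\|\cdot\|E\|<1$ is equivalent precisely to the thresholds $\lambda<(1+C_q/d_\infty)^{-2}$ when $q>0$ and $\lambda<(1+C_q^2 D(q)(1-q)^2/d_\infty)^{-2}$ when $q<0$. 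The main obstacle is the bookkeeping across the two regimes: the quantities $K$, $\|T\|$, and $\|T^{-1}\|$ each have a different form depending on the sign of $q$, and obtaining the sharp form of the threshold relies on invoking the positive lower bound on $T$ from Lemma \ref{Tn} in each case separately, rather than merely its positivity.
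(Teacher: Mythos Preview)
Your proof is correct and follows essentially the same route as the paper: the same rewriting of $S_n$ via Lemma~\ref{wen} and the definition of $T_{n-k}$, the same dominated-convergence argument using the bounds of Lemma~\ref{bnd1}(i), and the same Neumann-series factorization $S_\infty=T(1+T^{-1}V)$ with the case-split estimates from Lemma~\ref{Tn} yielding the stated thresholds. The only cosmetic difference is that the paper first verifies absolute convergence of the series for $S_\infty$ before treating $S_n$, whereas you derive both simultaneously from the uniform majorant.
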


\begin{proof}
Let $B(q)= \sqrt{C_qD(q)}$. First note that the series defining $S_\infty$ converges absolutely. Indeed by Lemma \ref{bnd1} $(i)$, we have 
\begin{align*}
&\| (1-q)^{k/2}c_q(e)^k \|\leq \lambda^{-k/4}, \quad \| (1-q)^{k/2}c_q(\overline e)^k \|\leq\lambda^{k/4}, \quad\text{ when } 0\leq q<1;\\
&\| (1-q)^{k/2}c_q(e)^k \|\leq B(q)\lambda^{-k/4}, \,\, \| (1-q)^{k/2}c_q(\overline e)^k \|\leq B(q) \lambda^{k/4}, \text{ when } -1< q<0;
\end{align*}
Thus, using Lemma \ref{Tn} we have
\begin{equation*}
\sum_{k=0}^\infty \| c_k (1-q)^k\lambda^{k/2} c_q(e)^{*k}Tc_q(\overline e)^{k*} \| \leq \begin{cases} C_q \sum_{k=0}^\infty {\lambda}^{k/2}<\infty, &\text{ if } q\geq 0;\\ C_q^2D(q)(1-q)\sum_{k=0}^\infty {\lambda}^{k/2}<\infty, &\text{ if } q< 0. \end{cases}
\end{equation*}
We note the following:
\begin{align*}
S_n &= (1-q)^n \lambda^{ \frac{n}{2} }  {c_q(e)}^{ *n} W(e^{ n})\\
&=(1-q)^n \lambda^{ \frac{n}{2} }  {c_q(e)}^{ *n} 	\displaystyle \sum_{k=0}^{n}  \left( \begin{matrix} n\\ k \end{matrix}\right)_q c_q(e)^{n-k} c_q(\bar{e})^{* k} \quad (\text{Lemma \ref{wen}})\\
&=	\displaystyle \sum_{k=0}^{n}  \left( \begin{matrix} n\\ k \end{matrix}\right)_q  (1-q)^k \lambda^{ \frac{k}{2} }  {c_q(e)}^{ *k} (1-q)^{n-k} \lambda^{\frac{n-k}{2}}  c_q(e)^{* (n-k)}  c_q(e)^{n-k} c_q(\bar{e})^{* k}\\
&=	\displaystyle \sum_{k=0}^{n}  \left( \begin{matrix} n\\ k \end{matrix}\right)_q  (1-q)^k \lambda^{ \frac{k}{2} }  {c_q(e)}^{ *k} T_{n-k} c_q(\bar{e})^{* k} \quad (\text{Lemma \ref{Tn}}).
\end{align*}
We have just seen in Lemma \ref{Tn} that $T_{n-k}$ is norm convergent to $T$ (and bounded by $1$ if $q\geq 0$ or  $1-q$ if $q<0$). Thus, the general term in $S_n$ converges to the general term of $S_\infty$ in norm and one concludes the convergence in the statement with the dominated convergence theorem as $\sum_{k=0}^\infty \lambda^{k/2}<\infty$.

We also have $S_\infty=T(1+T^{-1}V)$ $($use Lemma \ref{Tn}$)$, where 
\begin{equation*}
\|V\| \leq \begin{cases} C_q \sum_{k=1}^\infty {\lambda}^{k/2},\quad &\text{ if } q>0;\\ C_q^2D(q)(1-q) \sum_{k=1}^\infty {\lambda}^{k/2},\quad &\text{ if } q<0.\end{cases} 
\end{equation*}
The rest follows by ensuring that $\norm{T^{-1}V}<1$ using Lemma \ref{Tn}. We know that $\|T^{-1}\|\leq \frac{1}{d_\infty}$ if $q>0$ and $\|T^{-1}\|\leq \frac{1-q}{d_\infty}$ if $q<0$ by Lemma \ref{Tn}.  Thus, $S_\infty$ is invertible if $\frac{C_q}{d_\infty} \frac {\sqrt \lambda}{1-\sqrt{\lambda}} <1$ when $q>0$ and $C_q^2D(q)\frac{(1-q)^2}{d_\infty} \frac {\sqrt \lambda}{1-\sqrt{\lambda}} <1$ when $q<0$.
\end{proof}	

\begin{Remark}\label{Rem1} $ $
\begin{enumerate}
\item When $q=0$, $S_\infty=\sum_{k=0}^\infty \lambda^{k/2} c_0(e)^{*k}c_0(\bar e)^{*k}$. The operators $l_k=c_0(\bar e)^kc_0( e)^{k}$ are partial isometries with orthogonal ranges {when $k\geq 1$}. On the smallest subspace containg $\Omega$ and invariant by $l_k$'s,  $S_\infty$ acts like $ 1+ \sqrt{\frac \lambda {1-\lambda}}l^*$ where $l$ is a unilateral shift. Hence, $S_\infty$ has a kernel if  $\frac \lambda {1-\lambda}>1$, that is $\lambda>\frac 12$.
\item As a result of Lemma \ref{s}, we have 
\begin{align*}
S_\infty^*\Omega&= \sum_{k=0}^\infty c_k(1-q)^k\lambda^{k/2} c_q(\bar e)^kTc_q(e)^k\Omega
=\sum_{k=0}^\infty c_k(1-q)^k\lambda^{k/2} c_q(\bar e)^kT(e^k)\\
&=\sum_{k=0}^\infty c_k(1-q)^k\lambda^{k/2} c_q(\bar e)^k\frac{d_\infty}{d_k}e^k \quad(\text{Lemma \ref{Tn}})\\
&=d_\infty\sum_{k=0}^\infty c_k^2(1-q)^k\lambda^{k/2}{\bar e}^ke^k.
\end{align*}
The vector $S_\infty^*\Omega$ will be useful later in the proof. 
\end{enumerate}
\end{Remark}

As a consequence of the Wick formula, we explicitly get:
\begin{Lemma}\label{ween}
For every $n\geq0$, there are reals $q_{k,l}$, $0\leq k,l\leq n$ with $|q_{k,l}|\leq C_q^2 |q|^{(n-k)l}$ such that
$$W(\bar e^n e^n)=\sum_{k,l=0}^n q_{k,l}  c_q(\bar e)^kc_q( e)^l c_q(e)^{* {(n-k)}}c_q(\bar e)^{*(n-l)}.$$
\end{Lemma}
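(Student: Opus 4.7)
The plan is to evaluate the Wick formula \eqref{wick'} directly on the word $\xi_1\cdots \xi_{2n}$ with $\xi_j=\bar e$ for $1\leq j\leq n$ and $\xi_j=e$ for $n+1\leq j\leq 2n$, and then to read off the coefficients combinatorially. For a subset $\mathfrak{J}\subseteq\{1,\ldots,2n\}$ appearing in \eqref{wick'}, I would write $\mathfrak{J}_1=\mathfrak{J}\cap\{1,\ldots,n\}$ and $\mathfrak{J}_2=\mathfrak{J}\cap\{n+1,\ldots,2n\}$, with $|\mathfrak{J}_1|=k$ and $|\mathfrak{J}_2|=l$. Since $\overline{\bar e}=e$ and $\overline e=\bar e$, and creations appear in increasing order of $\mathfrak{J}$ while annihilations appear in increasing order of $\mathfrak{J}^c$, the Wick term attached to such a $\mathfrak{J}$ is forced to be exactly $c_q(\bar e)^k c_q(e)^l c_q(e)^{*(n-k)}c_q(\bar e)^{*(n-l)}$. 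This already produces the shape claimed in the statement and reduces the problem to computing $q_{k,l}$.

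The main step is then to decompose the crossing count. A crossing is a pair $(j,k')$ with $j\in\mathfrak{J}$, $k'\in\mathfrak{J}^c$ and $j>k'$. Splitting according to which of the two blocks $\{1,\ldots,n\}$ and $\{n+1,\ldots,2n\}$ contains $j$ and $k'$, three of the four cases contribute: within the first block one obtains $c(\mathfrak{J}_1,\mathfrak{J}_1^c)$; within the second block one obtains $c(\mathfrak{J}_2,\mathfrak{J}_2^c)$; and the rigid mixed contribution equal to $(n-k)l$ comes from all pairs with $j\in\mathfrak{J}_2$, $k'\in\mathfrak{J}_1^c$. The fourth case is empty since $j\leq n<k'$. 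Using the classical identity $\sum_{|\mathfrak{J}_1|=k}q^{c(\mathfrak{J}_1,\mathfrak{J}_1^c)}=\binom{n}{k}_q$ block by block, one arrives at the explicit formula
\[
q_{k,l}=q^{(n-k)l}\binom{n}{k}_q\binom{n}{l}_q.
\]

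For the final estimate I would exploit that the Gaussian binomial is a polynomial in $q$ with non-negative integer coefficients, so $|\binom{n}{k}_q|\leq \binom{n}{k}_{|q|}$, and then use
\[
\binom{n}{k}_{|q|}=\prod_{i=1}^k \frac{1-|q|^{n-i+1}}{1-|q|^i}\leq \prod_{i=1}^{\infty}\frac{1}{1-|q|^i}=C_q,
\]
which yields $|q_{k,l}|\leq C_q^2\,|q|^{(n-k)l}$. There is no conceptual obstacle; the argument is essentially a bookkeeping exercise on top of \eqref{wick'}. The only point that needs some care is the last estimate, because the naive bound $\binom{n}{k}_q=d_n/(d_kd_{n-k})$ combined with $d_j,1/d_j\leq C_q$ would give a worse power of $C_q$, so it is the polynomial positivity of the Gaussian binomials that delivers the sharp $C_q^2$ appearing in the statement.
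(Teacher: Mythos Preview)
Your proposal is correct and follows essentially the same approach as the paper: both apply the Wick formula \eqref{wick'} to the word $\bar e^n e^n$, split $\mathfrak{J}$ according to its intersection with $\{1,\ldots,n\}$ and $\{n+1,\ldots,2n\}$, and decompose the crossing count as $c(\mathfrak{J}_1,\mathfrak{J}_1^c)+c(\mathfrak{J}_2,\mathfrak{J}_2^c)+(n-k)l$ to obtain $q_{k,l}=q^{(n-k)l}\sum_{J_1}q^{c(J_1,J_1^c)}\sum_{J_2}q^{c(J_2,J_2^c)}$. The only difference is that the paper bounds the two inner sums by $C_q$ via a citation to \cite{B99}, whereas you go one step further and identify them explicitly as the Gaussian binomials $\binom{n}{k}_q$ and $\binom{n}{l}_q$, then use polynomial positivity to pass to $|q|$ and bound by $C_q$; this makes your argument self-contained but is otherwise the same proof.
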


\begin{proof}
We use the Wick formula in Eq. \eqref{wick'}. Terms of the form $c_q(\bar e)^kc_q( e)^l c_q(e)^{* {(n-k)}}c_q(\bar e)^{*(n-l)}$ occur when we choose $\mathfrak{J}=J_1\cup J_2\subset\{1,\ldots,2n\}$ with $J_1\subset\{1,\ldots,n\}$ of cardinal $k$ and $J_2\subset \{n+1,\ldots,2n\}$ of cardinal $l$. Thus, the formula holds with $q_{k,l}=\sum_{J_1,J_2}q^{c(\mathfrak{J},\mathfrak{J}^c)}$, where the sum runs over all possibilities as mentioned.

Writing $\mathfrak{J}^c=K_1\cup K_2$ with 
$K_1=\mathfrak{J}^c\cap  \{1,\ldots,n\}$ and $K_2=\mathfrak{J}^c\cap  \{n+1,\ldots,2n\}$, we have 
$c(\mathfrak{J},\mathfrak{J}^c)=c(J_1,K_1)+c(J_2,K_2)+(n-k)l$, has all elements in $K_1$ cross those of $J_2$. Hence, 
$$q_{k,l}= q^{(n-k)l}. \sum_{J_1} q^{c(J_1,K_1)}. \sum_{J_2} q^{c(J_2,K_2)}.$$
It follows from \cite{B99} that $|q_{k,l}|\leq   |q|^{(n-k)l} C_q^2$. 
\end{proof}

As a result of Lemma \ref{ween} we have:
  
\begin{Lemma}\label{bnd2}
The sequence $((1-q)^{n}W({\bar e}^{ n} e^{  n}))$ is  bounded.	
\end{Lemma}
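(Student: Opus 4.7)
The plan is to combine the Wick-type expansion in Lemma \ref{ween} with the operator norm bounds of Lemma \ref{bnd1}(i).

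Applying the triangle inequality to the decomposition of $W(\bar e^n e^n)$ provided by Lemma \ref{ween}, I would write
\begin{align*}
(1-q)^{n}\|W(\bar e^n e^n)\|\leq (1-q)^{n}\sum_{k,l=0}^n |q_{k,l}|\cdot\|c_q(\bar e)^k\|\cdot\|c_q(e)^l\|\cdot\|c_q(e)^{*(n-k)}\|\cdot\|c_q(\bar e)^{*(n-l)}\|.
\end{align*}
Lemma \ref{bnd1}(i) supplies the elementary bounds $\|c_q(e)^m\|\leq B(q)(1-q)^{-m/2}\lambda^{-m/4}$ and $\|c_q(\bar e)^m\|\leq B(q)(1-q)^{-m/2}\lambda^{m/4}$ (and the same for the adjoints), where $B(q)=1$ for $q\geq 0$ and $B(q)=\sqrt{C_qD(q)}$ for $q<0$. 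Multiplying the four norms produces a factor of $B(q)^4(1-q)^{-n}\lambda^{(k-l)/2}$, so the two factors $(1-q)^{\pm n}$ cancel. Combining with $|q_{k,l}|\leq C_q^2|q|^{(n-k)l}$ and the substitution $i:=n-k$, the statement reduces to showing that
\begin{align*}
\Sigma_n:=\sum_{i,l=0}^n |q|^{il}\lambda^{(n-i-l)/2}
\end{align*}
is bounded independently of $n$.

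I would split $\Sigma_n$ along the antidiagonal $i+l=n$. For $i+l\leq n$, one has $\lambda^{(n-i-l)/2}\leq 1$, and summing $|q|^{il}$ on the antidiagonal $\{i+l=s\}$ is uniformly bounded in $s$ (the two boundary terms $i=0$ and $i=s$ each give $1$, while the interior terms decay like $|q|^{i(s-i)}\leq|q|^{s/2}$ for $1\leq i\leq s-1$); summing this bound against $\lambda^{(n-s)/2}$ over $s=0,\ldots,n$ yields a uniform bound of order $(1-\sqrt\lambda)^{-1}$. For $i+l>n$, I set $t:=i+l-n\geq 1$; on this antidiagonal the map $i\mapsto i(n+t-i)$, restricted to $t\leq i\leq n$, is minimized at the endpoints with common minimum value $nt$, so $|q|^{il}\leq|q|^{nt}$ throughout, and the $t$-th antidiagonal contributes at most $(n-t+1)|q|^{nt}\lambda^{-t/2}\leq n(|q|^n/\sqrt\lambda)^t$. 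Summing this geometric series in $t$, and observing that $|q|^n\to 0$ ensures $|q|^n/\sqrt\lambda<1/2$ for all $n$ past some $N_0$, produces a bound which goes to $0$ and is in any case uniform in $n$.

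The only genuinely delicate point is the region $i+l>n$ combined with small $\lambda$: there the crude estimate $|q|^{il}\leq|q|^{i+l-1}$ would force the constraint $\lambda>q^2$, and one must exploit the super-exponential decay of $|q|^{il}$ in order to defeat the exponential growth of $\lambda^{-(i+l)/2}$. The endpoint estimate on $i(n+t-i)$ is exactly the tool that makes this work; everything else is routine bookkeeping.
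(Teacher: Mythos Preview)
Your argument is correct and is essentially the same as the paper's proof: both start from the Wick expansion of Lemma~\ref{ween}, bound the four creation/annihilation factors to reduce to the double sum $\sum_{k,l}|q|^{(n-k)l}\lambda^{(k-l)/2}$, and then slice along antidiagonals, using the endpoint-minimum of the quadratic $i\mapsto i(n+t-i)$ to control the region where $\lambda^{-t/2}$ grows. Your substitution $i=n-k$ and split along $i+l=s$ is exactly the paper's split along $j=k-l$ in disguise (since $j=n-s$), and the delicate estimate $il\geq nt$ on the $t$-th super-antidiagonal is literally the paper's bound $(n-s)(s-j)\geq -nj$ for $j<0$.
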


\begin{proof}
We rely on Lemma \ref{ween} and the triangle inequality with the help of 
Eq. \eqref{Normelt} and Eq. \eqref{Normelt2}. Note that 
\begin{eqnarray*}
\| (1-q)^{n}W({\bar e}^{ n} e^{  n})\|&\leq& \sum_{k,l=0}^n C_q^4 |q|^{(n-k)l}\sqrt{d_l d_{n-l}d_k d_{n-k}}\lambda ^{(k-l)/2} \\ 
&\leq& \sum_{k,l=0}^n C_q^6 |q|^{(n-k)l}\lambda ^{(k-l)/2}\\
& =& C_q^6\sum_{j=-n}^n  \lambda^{j/2} \sum_{\substack{0\leq k,l\leq n \\ k-l=j}} |q|^{(n-k)l}.
\end{eqnarray*}
If we denote by $c_{j,n}$ the last quantity, we have for $j<0$
$$c_{j,n}= \sum_{s=0}^{n+j} |q|^{(n-s)(s-j)}\leq (n+j+1) |q|^{-nj}.$$
If $j\geq 0$, then $$c_{j,n}= \sum_{k=j}^{n} |q|^{(n-k)(k-j)}\leq \frac 1 {1-|q|}.$$
Choose $n_0$ large so that $\frac{\abs{q}^n}{\sqrt{\lambda}}<1$ for all $n\geq n_0$. 
Combining the estimates
\begin{eqnarray*}
\| (1-q)^{n}W({\bar e}^{ n} e^{ n})\|&\leq& C_q^6\Big( \frac 1{1-|q|}\sum_{j=0}^n \lambda^{j/2} + \sum_{j=1}^n \lambda^{-j/2}(n+1) \abs{q}^{nj}\Big)\\ 
&\leq &C_q^6\Big( \frac 1{(1-|q|)(1-\sqrt{\lambda})} +\frac {(n+1) \frac{|q|^n}{\sqrt{\lambda}}}
{1-\frac{|q|^n}{\sqrt{\lambda}}}\Big),\quad \forall n\geq n_0.\end{eqnarray*}
This allows to conclude as $(n+1)\abs{q}^n\rightarrow 0$ as $n\rightarrow\infty$.
\end{proof}

We end this section with the following lemma. 

\begin{Lemma}\label{lim}
Let $ n_1, n_2  \in \mathbb N\cup\{0\}$ and 	$ \Psi \in \CH^{\otimes_q l }$ for some $l\geq0$. Then, 
\begin{align*}
&(1-q)^n c_q(e)^* (\bar{e}^{(n +n_1)} e^{ (n + n_2)}  \Psi) \rightarrow 0,\\
&(1-q)^{n/2} \lambda^{-n/4}c_q(e)^* (\bar{e}^{(n +n_1)} \Psi) \rightarrow 0, \text{ as } n\rightarrow\infty.
\end{align*}
\end{Lemma}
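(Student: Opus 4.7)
The proof rests on the observation that $\langle e, \bar e\rangle_U = 0$, which follows because $\{e_1, e_2\}$ is orthonormal in $(\mathbb{C}^2, \langle\cdot,\cdot\rangle_U)$ and $e = \lambda^{-1/4} e_1$, $\bar e = \lambda^{1/4} e_2$. Consequently, from \eqref{Leftmult} (or Lemma \ref{Adjpowers} with $n=1$),
\[
c_q(e)^*(\bar e^{(n+n_1)}) \;=\; [n+n_1]_q\, \langle e, \bar e\rangle_U\, \bar e^{(n+n_1-1)} \;=\; 0.
\]
So when I apply the Split Adjoint Lemma \ref{SplitAdjoint} to push $c_q(e)^*$ across a tensor of the form $\bar e^{(n+n_1)} \otimes (\text{rest})$, the boundary term dies and only the carry-through piece survives, weighted by its characteristic factor $q^{n+n_1}$. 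This geometric factor is what drives the convergence.

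For the first limit, applying Lemma \ref{SplitAdjoint} to $\bar e^{(n+n_1)} \otimes (e^{(n+n_2)} \otimes \Psi)$ yields
\[
c_q(e)^*(\bar e^{(n+n_1)} e^{(n+n_2)} \Psi) \;=\; q^{n+n_1}\, \bar e^{(n+n_1)} \otimes c_q(e)^*(e^{(n+n_2)}\Psi).
\]
I would then estimate in norm using the submultiplicative bound $\|\xi \otimes \eta\|_q \le \sqrt{C_q}\|\xi\|_q\|\eta\|_q$ (immediate from $\xi \otimes \eta = c_q(\xi)\eta$ and \eqref{Normelt}), the boundedness of $c_q(e)^*$, and the explicit formulas $\|e^k\|_q = \sqrt{[k]_q!}\,\lambda^{-k/4}$, $\|\bar e^k\|_q = \sqrt{[k]_q!}\,\lambda^{k/4}$. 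Writing $[k]_q! = d_k (1-q)^{-k}$ from \eqref{Normelt2} and using that $d_k$ is bounded, the quantity $(1-q)^n \sqrt{[n+n_1]_q!\,[n+n_2]_q!}$ remains uniformly bounded in $n$; the powers of $\lambda$ also collapse to a constant depending only on $n_1, n_2$. The total bound is therefore $O(|q|^n)\|\Psi\|_q \to 0$.

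The second limit is analogous but shorter: Lemma \ref{SplitAdjoint} gives
\[
c_q(e)^*(\bar e^{(n+n_1)}\Psi) \;=\; q^{n+n_1}\, \bar e^{(n+n_1)} \otimes c_q(e)^*(\Psi),
\]
so after multiplying by $(1-q)^{n/2}\lambda^{-n/4}$, the factor $\lambda^{-n/4}$ cancels the $\lambda^{(n+n_1)/4}$ coming from $\|\bar e^{(n+n_1)}\|_q$, while $(1-q)^{n/2}\sqrt{[n+n_1]_q!} = \sqrt{d_{n+n_1}}(1-q)^{-n_1/2}$ stays bounded. One is again left with $O(|q|^n)\|\Psi\|_q$.

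I do not anticipate any genuine obstacle: the essential content is that the orthogonality of $e$ and $\bar e$ forces the annihilator to extract the exponentially small factor $q^{n+n_1}$, and this decay defeats whatever polynomial-in-$n$ growth the scaling prefactors and the $q$-factorials could produce; the rest is bookkeeping of powers of $\lambda$, $(1-q)$ and the bounded quantities $d_k$.
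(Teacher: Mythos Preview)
Your argument is correct and follows essentially the same route as the paper: both rely on the orthogonality $\langle e,\bar e\rangle_U=0$ to kill the first term in Lemma~\ref{SplitAdjoint}, leaving the factor $q^{n+n_1}$, and then bound the remaining tensor using Eq.~\eqref{Normelt} and Eq.~\eqref{Normelt2}. The only cosmetic difference is that the paper writes the estimate as a single chain $\|c_q(\bar e)^{n+n_1}c_q(e)^*c_q(e^{n+n_2})\Psi\|_q\le C_q^{3/2}\|\bar e^{n+n_1}\|_q\|e\|_q\|e^{n+n_2}\|_q\|\Psi\|_q$, whereas you split it into the submultiplicative bound $\|\xi\otimes\eta\|_q\le\sqrt{C_q}\|\xi\|_q\|\eta\|_q$ together with $\|c_q(e)^*\|<\infty$; the resulting constants and the $O(|q|^n)$ decay are identical.
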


\begin{proof}
We only prove the first one, the second one is similar. Using Lemma \ref{SplitAdjoint} and Eq. \eqref{Leftmult}, it follows that 
\begin{align*}
&\norm{(1-q)^n c_q(e)^* \bar{e}^{(n +n_1)}   e^{   (n + n_2)}   \Psi }_q\\
=&(1-q)^n   \abs{q}^{(n_1 + n)}  \norm{ \bar{e}^{ (n +n_1)}  \big( c_q(e)^*  e^{   (n + n_2)}   \Psi \big)}_q\\
= & (1-q)^n   \abs{q}^{(n_1 + n)}\norm{ c_q(\bar{e})^{(n +n_1)} c_q(e)^* c_q(e^{n+n_2}) \Psi }_q\\
\leq &C_q^{\frac{3}{2}}  (1-q)^n   \abs{q}^{(n_1 + n)} \norm{\bar{e}^{( n+ n_1)}}_q  \norm{e^{( n+ n_2) }}_q\norm{e}_q\norm{\Psi}_q \quad (\text{by \eqref{Normelt}})\\
= &  C_q^{\frac{3}{2}}  (1-q)^n   \abs{q}^{(n_1 + n)} \lambda^{\frac{(n+n_1- n- n_2 -1)}{4} }\sqrt{d_{n+n_1} d_{n+n_2}d_1} (1-q)^{-\frac{2n+n_1+n_2+1}{2}} \norm{\Psi}_q\\
&\quad\quad\quad\quad\quad\quad\quad\quad\quad\quad\quad\quad\quad\quad\quad\quad\quad\quad\quad\quad\quad\quad\quad\quad (\text{by Eq. \eqref{Normelt2}})\\
\leq &  C_q^3 (1 -q)^{-\frac{ n_1+n_2 +1}{2}} \lambda^{\frac{(n_1- n_2 -1)}{4} }\abs{q}^{(n_1 + n)} \norm{\Psi}_q \xrightarrow{ n \rightarrow \infty} 0.
\end{align*}
\end{proof}

\subsection{Technicalities to treat all cases}\label{casebig3}

The results in this section are crucial for tracking the relative commutant of $M_q^\varphi$ and will be used even in the case when $dim(\CH_\R)\geq 3$. Thus, we adjust the set up of this section in such a way that it applies to all cases.

For the next three results, we assume $\CH_\R= \mathbb{R}^2\oplus \mathcal{K}_\R$, where $\mathcal{K}_\R$ is a real Hilbert space $($could be $0)$, and $\mathbb{R}^2$ is reducing subspace for $(U_t)$ with associated sub representation being ergodic i.e., $(\mathbb{R}^2, U_t)$ is as before. Define $\mathcal{O}=\{e,\bar e,\Omega\}\cup \{\xi\in \mathcal{K}_\C: W(\xi) \text{ is analytic for }(\sigma_t^\varphi)\}$.

The next lemma tracks $w^*$-limits of certain sequences of operators, which in turn tracks the relative commutant of the centralizers.

\begin{Lemma}\label{comp}
For any $\chi \in \CH^{\otimes_q i}$ and $\eta\in \CH^{\otimes_q j}$, $a,\, b,\, \alpha,\, \beta\in \mathbb Z$, we have 
\begin{align*}
&\lim_{n\to \infty} (1-q)^{2n}\langle \eta \bar{e}^{n+b }e^{n+\beta},\,\bar{e}^{n+a}e^{n+\alpha}\chi
\rangle_q\\
=&\,\delta_{\substack{a=b+j\\\alpha+i=\beta}}{d_\infty^2}\, \frac {\lambda^{(a-\beta)/2}}{(1-q)^{a+ \beta}}
\langle \eta,\,\frac{\bar{e}^j}{\|\bar{e}^j\|_q^2}\rangle_q \langle \frac{{e}^i}{\|e^i\|_q^2},\,\chi\rangle_q.
\end{align*} 
\end{Lemma}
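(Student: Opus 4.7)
The plan is to prove the limit by induction on $j+i$ after reducing (by sesquilinearity) to pure tensors $\eta=\eta_1\otimes\cdots\otimes\eta_j$, $\chi=\chi_1\otimes\cdots\otimes\chi_i$. For the base case $j=i=0$, the orthogonality $\langle \bar e,e\rangle_U=0$ forces every contributing permutation in $\langle \bar e^{n+b}e^{n+\beta},\bar e^{n+a}e^{n+\alpha}\rangle_q$ to match $\bar e$-positions bijectively to $\bar e$-positions and $e$-positions to $e$-positions, which forces $a=b$ and $\alpha=\beta$; a direct count then gives
\[\|\bar e^{n+a}e^{n+\alpha}\|_q^2 = [n+a]_q!\,[n+\alpha]_q!\,\lambda^{(a-\alpha)/2} = \frac{d_{n+a}d_{n+\alpha}}{(1-q)^{2n+a+\alpha}}\lambda^{(a-\alpha)/2},\]
and multiplying by $(1-q)^{2n}$ and using $d_n\to d_\infty$ yields the desired limit.

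For the inductive step with $j\geq 1$, write $\eta=\eta_1\otimes\eta'$, move $c_q(\eta_1)$ to the right side of the inner product as its adjoint, and apply Lemma~\ref{SplitAdjoint} iteratively to expand
\begin{align*}
c_q(\eta_1)^*\bigl(\bar e^{n+a} e^{n+\alpha}\chi\bigr) = {}&[n+a]_q\langle \eta_1,\bar e\rangle_U\,\bar e^{n+a-1} e^{n+\alpha}\chi\\
&{}+q^{n+a}[n+\alpha]_q\langle \eta_1,e\rangle_U\,\bar e^{n+a} e^{n+\alpha-1}\chi\\
&{}+q^{2n+a+\alpha}\,\bar e^{n+a} e^{n+\alpha}\,c_q(\eta_1)^*\chi.
\end{align*}
After pairing with $\eta'\bar e^{n+b}e^{n+\beta}$ and scaling by $(1-q)^{2n}$, the second and third summands carry $q^{n+a}$ and $q^{2n+a+\alpha}$ respectively, while the accompanying inner products---each involving strictly smaller $j+i$---remain bounded in $n$ by the inductive hypothesis, so they vanish in the limit. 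The first summand dominates: $(1-q)[n+a]_q=1-q^{n+a}\to 1$, and invoking the inductive hypothesis at parameters $(a-1,b,\alpha,\beta,j-1,i)$ (whose delta condition $a-1=b+(j-1)$ is equivalent to $a=b+j$) gives the claimed limit modulo the algebraic identity
\[\langle\eta_1,\bar e\rangle_U\,\lambda^{-1/2}\langle \eta',\bar e^{j-1}/\|\bar e^{j-1}\|_q^2\rangle_q = \langle \eta,\bar e^j/\|\bar e^j\|_q^2\rangle_q,\]
which follows from $\langle\eta,\bar e^j\rangle_q=[j]_q!\prod_k\langle\eta_k,\bar e\rangle_U$ and $\|\bar e^j\|_q^2=[j]_q!\lambda^{j/2}$.

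The remaining case $j=0$, $i\geq 1$ is handled symmetrically: write $\chi=\chi''\otimes\chi_i$ and move $c_{q,r}(\chi_i)$ to the first slot as its adjoint. The analog of the above expansion now has the \emph{$e$-annihilation} term (with combinatorial factor $[n+\beta]_q$) as the dominant survivor, while annihilating a $\bar e$ carries the suppressing $q^{n+\beta}$. The conjugate-linearity of $\langle\cdot,\cdot\rangle_q$ in its first slot turns the coefficient $\langle\chi_i,e\rangle_U$ back into $\langle e,\chi_i\rangle_U$, after which the analogous algebraic check closes the induction.

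The main technical point to watch is the bookkeeping of $(1-q)^{2n}$ factors: each reduction absorbs one $[n+\ast]_q=(1-q^{n+\ast})/(1-q)$ against exactly one power of $1-q$, leaving a $(1-q)^{-1}$ residue, so that after $a+\beta$ such reductions (counting from the base case) the $(1-q)^{-(a+\beta)}$ prefactor on the right-hand side is reproduced exactly. The vanishing of the subdominant $q^n$-terms relies on knowing that the $(1-q)^{2n}$-scaled inner products at the next inductive level are uniformly bounded in $n$, but this is automatic since the inductive hypothesis already gives their convergence.
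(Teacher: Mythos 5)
Your proposal is correct and follows the same inductive strategy on $j+i$ that the paper uses: verify the base case $i=j=0$ by computing $\|\bar e^{n+a}e^{n+\alpha}\|_q^2$ directly, then for the inductive step move one left‑creation operator from $\eta$ (resp.\ one right‑creation operator from $\chi$) across the inner product as an adjoint and expand it with Lemma~\ref{SplitAdjoint}, identifying the dominant term (the one reducing the $\bar e$‑block on the right, resp.\ the $e$‑block on the left) and killing the others via the $q^{n+\ast}$ suppression.

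The two presentations differ only in packaging. The paper first disposes, as a separate preliminary step, of the case where a letter of $\eta$ or $\chi$ lies outside $\{e,\bar e,\Omega\}$, so that in the inductive step it can assume $\eta_1\in\{e,\bar e\}$ and the expansion of $c_q(\eta_1)^*(\bar e^{n+a}e^{n+\alpha}\chi)$ reduces to two summands; it then invokes Lemma~\ref{lim} to discard the $\eta_1=e$ case. You instead expand $c_q(\eta_1)^*$ for an arbitrary letter, getting three summands, and discard the two subdominant ones by combining the $q^{n+a}$ (resp.\ $q^{2n+a+\alpha}$) prefactor with the boundedness of the $(1-q)^{2n}$‑scaled inner products that is already guaranteed by the inductive hypothesis at a strictly lower level. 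This is slightly more self‑contained (it bypasses both the preliminary reduction and Lemma~\ref{lim}), but it is the same underlying mechanism. Two small remarks: (i) your description of the $(1-q)$ bookkeeping as ``$a+\beta$ reductions'' is loose — there are $j+i$ reductions, each contributing one $(1-q)^{-1}$ on top of the base case's $(1-q)^{-(a-j+\beta-i)}$, which of course nets out to $(1-q)^{-(a+\beta)}$ as claimed — and (ii) in the $j=0,\,i\ge1$ case you should make sure to say which of the two resulting terms is dominant: for $c_{q,r}(\chi_i)^*$ acting on $\bar e^{n+b}e^{n+\beta}$, the weights are $q^{2n+b+\beta-\ell}$ for position $\ell$, so annihilating from the trailing $e$‑block costs $[n+\beta]_q$ with no $q^n$ factor while annihilating from the leading $\bar e$‑block picks up $q^{n+\beta}$; you state this correctly but it is worth having the computation in hand. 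Neither issue is a gap.
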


\begin{proof}
As one is taking limit as $n\rightarrow \infty$, one can assume that
$n+a$, $n+\alpha$, $n+b$ and $n+\beta$ are all positive and large. By
linearity, we can assume that $\eta$ and $\chi$ are elementary tensors
in the letters from $\mathcal{O}$. If $n+a\geq n+\alpha$, then
$a-\alpha\geq 0$. Consequently, $(1-q)^{n}
\bar{e}^{n+a}e^{n+\alpha}\chi = c_{q,r}(\chi) c_q(\bar
e^{a-\alpha})(1-q)^nW(\bar e^{n+\alpha} e^{n+\alpha})\Omega$ is a
bounded sequence from Eq. \eqref{Normelt2} and Lemma
\ref{bnd2}. Dealing with the case $n+a< n+\alpha$ similarly (replacing $\chi$ by $e^{\alpha-a}\chi$), it
follows that $(1-q)^{n}\bar{e}^{n+a}e^{n+\alpha}\chi$ is
bounded. Similarly, $(1-q)^{n}\eta\bar{e}^{n+b}e^{n+\beta}$ is
bounded.

First, assume that $\eta$ or $\chi$ contains a letter different from $e, \bar e, \Omega$. For simplicity assume $\eta=\eta_1\cdots\eta_j$  is such that at least one letter in $\eta \not\in \{e, \bar e, \Omega\}$. Let $t=\min\{l: \eta_l\not\in \{e,\bar e,\Omega\}, \, 1\leq l\,\leq j\}$. Let $T=\prod_{l=1}^{t-1} c_q(\eta_l)$. By Lemma \ref{SplitAdjoint}, it follows that there exists a finite set $F$ $($depending on $l)$ and scalars $c_{f,n}$ with $\sup_n\abs{c_{f,n}}<\infty$ for all $f\in F$, $a_f, \alpha_f\in \mathbb{Z}$ and vectors $\chi_f\in \mathcal{F}_q(\CH)$ for all $f\in F$ such that 
\begin{align*}
T^* \bar{e}^{n+a}e^{n+\alpha}\chi =\sum_{f\in F} c_{f,n} \bar{e}^{n+a_f}e^{n+\alpha_f}\chi_f.
\end{align*}
Fix $f\in F$. From Lemma \ref{SplitAdjoint} again, it follows that as $n\rightarrow\infty$, 
\begin{align*}
(1-q)^nc_q(\eta_t)^*\bar{e}^{n+a_f}e^{n+\alpha_f}\chi_f = q^{2n+a_f+\alpha_f} (1-q)^n\bar{e}^{n+a_f}e^{n+\alpha_f} c_q(\eta_t)^*\chi_f\rightarrow 0.
\end{align*}
Summing over $f\in F$, it follows that the limit in the statement is $0$ and matches with the right hand side of the statement.  Arguing similarly with $\chi$ and using right creation operators, it follows that it is sufficient to assume that the letters in $\eta,\chi$ are all in $\{e,\bar e, \Omega\}$, otherwise both sides in the statement are $0$.

We do the proof by induction on $i+j$. Note that $\langle \bar {e}^j,\,\frac{\bar{e}^j}{\|\bar{e}^j\|_q^2}\rangle_q=1$ and similarly for $e$. So, if $\eta$ is an elementary tensor in the letter $e$ and $\bar e$, the quantity $ \langle \eta,\,\frac{\bar{e}^j}{\|\bar{e}^j\|_q^2}\rangle_q$ is 1 if $\eta=\bar{e}^j$ and 0 otherwise. Thus, counting the number of letters $e$ and $\bar e$ gives the Dirac condition.

We start with $i+j=0$. The Dirac condition is clear and from Eq. \eqref{Normelt2} we have 
\begin{align}\label{diffuse}
\|\bar{e}^{n+a}e^{n+\beta}\|_q^2= \lambda^{(a-\beta)/2}[n+a]_q![n+\beta]_q!=
\lambda^{(a-\beta)/2} (1-q)^{-2n-a-\beta}d_{n+a}d_{n+\beta}.
\end{align}
Thus, the limit in the statement is $\lambda^{(a-\beta)/2}(1-q)^{-(a+\beta)} d_\infty^2$.

Assume for instance $j>0$.   Let $\eta=e_1\cdots e_j$ with $e_k\in\{ e, \bar{e}\} $. We have
\begin{align}\label{Limitinindunction}
(1-q)^{2n}\langle \eta \bar{e}^{n+b}e^{n+\beta},\,\bar{e}^{n+a}e^{n+\alpha}\chi\rangle_q=(1-q)^{2n}\langle \eta^\prime \bar{e}^{n+b}e^{n+\beta},c_q(e_1)^*\bar{e}^{n+a}e^{n+\alpha}\chi\rangle_q, 
\end{align}
with $\eta^\prime=e_2\cdots e_j$. By making arguments as in the first paragraph of the proof, it follows that 
$(1-q)^n\eta'\bar{e}^{n+b }e^{n+\beta}$ is bounded. Thus, using Lemma  \ref{lim}, we get that the limit in Eq. \eqref{Limitinindunction} is zero unless $e_1=\bar{e}$. If this is so, then from Lemma \ref{SplitAdjoint} we have
\begin{align*}
c_q(e_1)^*\bar{e}^{n+a}e^{n+\alpha}\chi=\lambda^{1/2}[n+a]_q \bar{e}^{n+a-1}e^{n+\alpha}\chi+ q^{2n+a+\alpha}\bar{e}^{n+a}e^{n+\alpha}\big(c_q(e_1)^*\chi\big). 
\end{align*}
Thus, by induction the limit exists and 
$$\lim_n (1-q)^{2n}\langle \eta \bar{e}^{n+b}e^{n+\beta},\bar{e}^{n+a}e^{n+\alpha}\chi\rangle_q= \frac{\lambda^{1/2}}{1-q}\lim_n(1-q)^{2n}\langle \eta^\prime \bar{e}^{n+b }e^{n+\beta},\bar{e}^{n+a-1}e^{n+\alpha}\chi
\rangle_q.$$
One can argue in the same way if $i>0$ using right creation operators and Eq. \eqref{Rightmultadj} when dealing with $\chi$. 
\end{proof}

The next lemma is the key to factoriality and irreducible centralizers. 

\begin{Lemma}\label{sxi}
The $w^*$-limit of $(1-q)^{2n}	W_r(\bar{e}^n e^n) W(\bar{e}^n e^n)$ exists and is the positive rank-one operator $T_\xi$ where 
\begin{align*} 
\xi = S_\infty^*(\Omega)=\|\cdot\|_q\text{-}\lim_{n\to \infty}(1-q)^n\lambda^{n/2} W(\bar e^n) e^n=d_\infty \sum_{k=0}^\infty c_k^2(1-q)^k \lambda^{k/2} {\bar e}^k e^k.
\end{align*}
Needless to say, $T_\xi$ is a scalar multiple of the rank-one projection $P_{\frac{\xi}{\norm{\xi}_q}}$.
\end{Lemma}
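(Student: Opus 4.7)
The proof splits naturally into two parts: identifying the norm-convergent formula for $\xi$, and then establishing the $w^*$-convergence $T_n := (1-q)^{2n} W_r(\bar e^n e^n) W(\bar e^n e^n) \to T_\xi$.

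First, for the norm identity, I would apply the analogue of Lemma~\ref{wen} with $e$ replaced by $\bar e$ (same proof): $W(\bar e^n) = \sum_{k=0}^n \binom{n}{k}_q c_q(\bar e)^{n-k} c_q(e)^{*k}$. Using Lemma~\ref{Adjpowers} with $\|e\|_q^2 = \lambda^{-1/2}$ gives $c_q(e)^{*k}(e^n) = \frac{[n]_q!}{[n-k]_q!}\lambda^{-k/2} e^{n-k}$, and then $c_q(\bar e)^{n-k}(e^{n-k}) = \bar e^{n-k}e^{n-k}$. Reindexing $l = n-k$,
\begin{equation*}
(1-q)^n \lambda^{n/2}\, W(\bar e^n)\, e^n \;=\; \sum_{l=0}^n \frac{d_n^2}{d_l^2\, d_{n-l}}\,(1-q)^l\lambda^{l/2}\,\bar e^l e^l.
\end{equation*}
Each coefficient tends to $d_\infty/d_l^2 = d_\infty c_l^2$ as $n\to\infty$, matching the series for $\xi = S_\infty^*\Omega$ from Remark~\ref{Rem1}(2). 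By Eq.~\eqref{diffuse}, $\|(1-q)^l\lambda^{l/2}\bar e^l e^l\|_q \leq C_q\,\lambda^{l/2}$ is uniformly summable, so the convergence is in $\|\cdot\|_q$.

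Second, for the $w^*$-convergence, observe that $\mathcal{J}(e) = \bar e$ and $\mathcal{J}(\bar e) = e$, so the modular formula \eqref{modulartheory} yields $S(\bar e^n e^n) = \bar e^n e^n$; hence $W(\bar e^n e^n)$ and $W_r(\bar e^n e^n)$ are self-adjoint and so is $T_n$. Lemma~\ref{bnd2} and its right-creation analogue give a uniform norm bound on $T_n$, reducing the problem to pointwise weak convergence of $\langle \eta, T_n\chi\rangle_q$ on a dense subspace: the elementary tensors $\eta \in \mathcal H^{\otimes_q j}$, $\chi \in \mathcal H^{\otimes_q i}$ with letters from $\mathcal O$. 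On such vectors, expand $W(\bar e^n e^n)\chi$ via Lemma~\ref{ween} and $W_r(\bar e^n e^n)\eta$ via the right-Wick analogue of Proposition~\ref{wick}, and apply Lemma~\ref{SplitAdjoint} iteratively to evaluate the annihilation parts on $\chi$ and $\eta$. Non-zero contributions require $n-k, n-l \leq i$ (with the symmetric constraint on the right), and the bound $|q_{k,l}| \leq C_q^2 |q|^{(n-k)l}$ from Lemma~\ref{ween} forces $k=n$ (and symmetrically on the right) for the surviving Wick indices as $n\to\infty$. The matrix coefficient thereby decomposes into a finite sum of inner products of the exact form $(1-q)^{2n}\langle\eta'\bar e^{n+b}e^{n+\beta},\,\bar e^{n+a}e^{n+\alpha}\chi'\rangle_q$, with $\eta', \chi'$ coming from partial annihilations within $\eta, \chi$. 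Lemma~\ref{comp} then supplies the limit of each summand: its Dirac constraints $a=b+j'$, $\alpha+i'=\beta$ (where $j',i'$ are the degrees of $\eta',\chi'$) isolate a single surviving shift pattern, projecting $\eta'$ onto $\bar e^{j'}/\|\bar e^{j'}\|_q^2$ and $\chi'$ onto $e^{i'}/\|e^{i'}\|_q^2$. Summing all surviving pieces against the Wick prefactors from both expansions reorganizes the total into $\langle\eta,\xi\rangle_q \langle\xi,\chi\rangle_q$, establishing $T_n \to T_\xi := |\xi\rangle\langle\xi|$ in $w^*$.

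The main obstacle will be the combinatorial bookkeeping in the last step: verifying that the $|q|^{(n-k)l}$ decay of the Wick coefficients kills all but a finite family of terms (those with $k\sim n$ on the left, and dually on the right), and that the surviving prefactors telescope into the product $\langle\eta,\xi\rangle_q\langle\xi,\chi\rangle_q$ with exactly the coefficients matching the series expansion of $\xi$ in powers of $\bar e^k e^k$. A useful sanity check on the normalization is the scalar case $\eta=\chi=\Omega$: $\langle\Omega, T_n\Omega\rangle_q = (1-q)^{2n}\|\bar e^n e^n\|_q^2 = d_n^2 \to d_\infty^2 = |\langle\xi,\Omega\rangle_q|^2 = \langle\Omega, T_\xi\Omega\rangle$, confirming $T_\xi = |\xi\rangle\langle\xi| = \|\xi\|_q^2\, P_{\xi/\|\xi\|_q}$.
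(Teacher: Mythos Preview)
Your proposal is correct in outline and arrives at the same destination as the paper, but the organization is genuinely different. The paper's key move is the commutation identity
\[
\langle \Psi,\, z_n \Phi\rangle_q \;=\; (1-q)^{2n}\,\langle W(\Psi)\,\bar e^n e^n,\; W_r(\Phi)\,\bar e^n e^n\rangle_q,
\]
which transfers the Wick expansion from the \emph{growing} operators $W(\bar e^n e^n)$, $W_r(\bar e^n e^n)$ to the \emph{fixed-size} operators $W(\Psi)$, $W_r(\Phi)$. This yields a finite sum (independent of $n$) of terms $\langle u_{\sigma,j}(\xi_n),\, v_{\rho,i}(\xi_n)\rangle_q$, and Lemma~\ref{lim} then kills the annihilation pieces unless they hit the correct letter. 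Your route instead expands $W(\bar e^n e^n)$ via Lemma~\ref{ween}, producing $(n+1)^2$ terms, and relies on the decay $|q_{k,l}|\le C_q^2|q|^{(n-k)l}$ together with the degree constraint $2n-k-l\le i$ to force $k=n$ in the limit; this is legitimate (the remaining operator norms are uniformly controlled once $n-k,\,n-l\le i$), but the bookkeeping is heavier and the right-hand analogue of Lemma~\ref{ween} must be worked out with the right conjugation $\overline{\,\cdot\,}^{\,r}$ and its $\lambda$-powers. The second substantive difference is in identifying $\xi$: the paper first observes abstractly that the limit decouples as $\langle\chi_k,\Phi\rangle_q\langle\Psi,\eta_l\rangle_q$ (hence rank one), uses self-adjointness to get $\chi_l=\eta_l$, and only then computes $\lim_n z_n\Omega$ separately to pin down the vector up to phase; your proposal instead asserts directly that the surviving prefactors ``reorganize'' into $\langle\eta,\xi\rangle_q\langle\xi,\chi\rangle_q$, which is true but is precisely the combinatorial telescoping you flag as the main obstacle. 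The paper's two-step identification sidesteps that reorganization entirely and is the cleaner path.
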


\begin{proof}
First note that $e, \bar e$ are analytic vectors for $(\Delta^{it}_\varphi)$, thus words in them belong to the Tomita algebra associated to $\varphi$. Further, $\bar{e}^{ n}e^{ n}\in M_q^\varphi\Omega$ and $J(\bar{e}^ne^n)=\bar{e}^ne^n$ for all $n$ $($see Thm. \ref{CentraliserDescribe}$)$. Consequently, $z_n=(1-q)^{2n} W_r(\bar{e}^{ n}e^{ n}) W(\bar{e}^{ n} e^{  n})\in \mathbf{B}(\mathcal{F}_q(\mathcal{H}))$ is a bounded sequence; also $\xi_n=(1-q)^{n}  \bar{e}^n e^n$ is bounded in $n$ $($by Lemma \ref{bnd2}$)$.

To show the $w^*$-convergence, we just need to find the limit of $\langle z_n \Phi, \Psi \rangle_q$ where $ \Psi \in \CH^{\otimes_q l}$ and $ \Phi \in \CH^{\otimes_q k}$, $k,l\geq 0$.  By density, we can also assume  that $\Phi =e_1 e_2 \cdots e_k$ and $\Psi=f_1 f_2 \cdots  f_l$ where $ e_i, f_j\in \mathcal{O}$. Then, we  have 
\begin{align*}
\langle \Psi,\,z_n \Phi \rangle_q  &= \langle \Psi,~~(1-q)^{2n}	W_r(\bar{e}^n e^n) W(\bar{e}^n e^n)   \Phi  \rangle_q \\	
&= (1-q)^{2n} \langle W(\Psi) \bar{e}^n e^n,~~W_r(\Phi) \bar{e}^n e^n \rangle_q.
\end{align*}	 

By the Wick formulas $($Prop. \ref{wick}$)$,  
$$W(\Psi)=\sum_{ 0\leq j\leq l,\,\sigma\in S_{l,\,j} } q^{| \sigma|}u_{\sigma,\,j}\,\text{ if }l>0, \quad
W_r(\Phi)=\sum_{ 0\leq i\leq k,\, \rho\in S_{k,\,i}} q^{|{\rm flip}\,\circ\,\rho|}v_{\rho,\,i}\text{ if }k>0, $$
where
\begin{align*} &u_{\sigma,\,j}=c_q(f_{\sigma(1)})\cdots c_q({f_{\sigma(j)}})c_q(\overline{f_{\sigma(j+1)}})^*c_q(\overline{f_{\sigma(l)}})^* \\
&\textrm{and }\\
&v_{\rho,\,i}=c_{q,r}(e_{\rho(1)})\cdots  c_{q,r}({e_{\rho(i)}})c_{q,r}(
\overline{g_{\rho(i+1)}}^r)^*c_{q,r}(\overline{g_{\rho(k)}}^r)^*,
\end{align*}
where $g_{\rho(i+1)}, \cdots, g_{\rho(k)}\in \CH_\C$ are vectors that correct the difference between the left and right conjugations $($by $e_{\rho(i+1)}, \cdots, e_{\rho(k)}$ respectively$)$. Note that $W(\Psi)=1$ if $l=0$ and $W_r(\Phi)=1$ if $k=0$.

Let $\min(k,l)>0$. One observes that the left or right annhilation operators in a symbol other  than $e,\bar e$ does not contribute in the inner product $\langle \Psi,\,z_n \Phi\rangle_q$. Thus, the contributing factor in $\langle \Psi,\,z_n \Phi \rangle_q$ comprises of two scenarios: $(i)$ when a generic term in the Wick expansion formula of $W_r(\Phi)$ and $W(\Psi)$ both consists of creation operators only, $(ii)$ when the annihilation operators in a generic term in the Wick expansion formula of $W_r(\Phi)$ or $W(\Psi)$ consists only of the symbols $e$ or $\bar e$. 

For both cases, if the letters in the creation operators $($either left or right or both, as the case may be$)$ consist of a symbol different from $e$ or $\bar e$, then by Lemma \ref{comp}, the associated limit contributing to $\langle  \Psi ,\,z_n \Phi\rangle_q$ goes to $0$ as $n\rightarrow \infty$. Consequently, we can assume that $e_i, f_j\in \{e,\bar e\}$. 
The same conclusion holds when $\min(k,l)=0$ and $\max(k,l)>0$. It is clear that, this reduction is nothing but compressing $M_q$ by the Jones' projection onto its subalgebra $\Gamma_q(\R^2, U_t)^{\prime\prime}$ $($which possess $\varphi$-preserving conditional expectation$)$ to reduce to the set up when $\dim(\CH_\R)=2$.

Therefore, we can now assume that $W_r(\Phi)=\sum_{ 0\leq i\leq k,\, \rho\in S_{k,\,i}} q^{|{\rm flip}\,\circ\,\rho|}\lambda^{n_{k,i}}v_{\rho,\,i}$ if $k>0$, where $n_{k,i}$ is an integer to correct the difference between the left and right conjugations and the right annihilation operators in $v_{\rho,\,i}$ consists of symbols from $\{e,\bar e\}$.     

If $l>0$, then using Lemma \ref{lim} $(l-j)$ times and Lemma \ref{SplitAdjoint}, we get that $u_{\sigma,\,j}(\xi_n)$ goes to $0$ in $\norm{\cdot}_q$ unless $f_{\sigma(j+1)}=\cdots=f_{\sigma(l)}=e$. If this is so, then using Lemma \ref{SplitAdjoint} and Lemma \ref{Adjpowers} we have: $u_{\sigma,\,j}(\xi_n)=\frac{[n]_q!}{[n-l+j]_q!}\lambda^{(l-j)/2}(1-q)^{n}\Psi_j\bar{e}^{n-l+j}e^n$, where $\Psi_j=f_{\sigma(1)}f_{\sigma(2)}\cdots f_{\sigma(j)}$. Setting $\Psi^c_j=f_{\sigma(j+1)}\cdots f_{\sigma(l)}$, we may rewrite $u_{\sigma,\,j}(\xi_n)$ in full generality as 
$$u_{\sigma,\,j}(\xi_n)=  \frac{[n]_q!}{[n-l+j]_q!}\lambda^{(l-j)/2}(1-q)^{n}
\langle \Psi_j^c,\,\frac {e^{l-j}}{\|{e}^{l-j}\|_q^2} \rangle_q\Psi_j \bar{e}^{n-l+j}e^n.$$

We can do the same for $\Phi$ to get $v_{\rho,\,i}(\xi_n)=\frac{[n]_q!}{[n-k+i]_q!}\lambda^{(i-k)/2}(1-q)^{n}\bar{e}^{n}e^{n-k+i} e_{\rho(i)}\cdots e_{\rho(1)}$  provided that $e_{\rho(i+1)}=\cdots=e_{\rho(k)}=\bar e$ if $k>0$. Set $\Phi_i= e_{\rho(i)}\cdots e_{\rho(1)}$ and $\Phi_i^c= e_{\rho(k)}\cdots e_{\rho(i+1)}$. Then,
$$v_{\rho,\,i}(\xi_n)=  \frac{[n]_q!}{[n-k+i]_q!}\lambda^{(i-k)/2}(1-q)^{n}\langle \frac{\bar{e}^{k-i}}{\norm{\bar{e}^{k-i}}_q^2},\,\Phi_i^c\rangle_q\,
\bar{e}^{n}e^{n-k+i}\Phi_i.$$	 

We are in position of using Lemma \ref{comp} with $a=\beta=0$ to get the existence of 
\begin{align}\label{Wicklim}
&\lim_n \langle u_{\sigma,\,j}(\xi_n),\,v_{\rho,\,i}(\xi_n)\rangle_q\\
%
\nonumber=\,& \delta_{\substack{2j=l\\2i=k}} \frac{d_\infty^2 \lambda^{(i+j)/2}}{(1-q)^{i+j}}
\langle\Psi_j,\,\frac {\bar{e}^{j}}{\|\bar{e}^{j}\|_q^2}\rangle_q \langle\Psi_j^c,\, \frac {e^{j}}{\|{e}^{j}\|_q^2} \rangle_q\langle\frac {{e}^{i}}{\|e^{i}\|_q^2},\,\Phi_i\rangle_q\langle{\frac {{\bar{e}}^{i}}{\|{\bar{e}}^{i}\|_q^2}},\,\Phi_i^c\rangle_q, 
\end{align}
{when $k,l>0$}. This can be interpreted as decoupled scalar products. {When $k$ or $l$ is $0$, there is no Wick product expansion in terms of creation and annihilation operators, but the obvious modifications justify the existence of the desired limit(s)  using Lemma \ref{comp} and matches Eq. \eqref{Wicklim}}. 

Thus, summing all the terms we get that for some $\chi_k\in \CH^{\otimes_q k}$ and 
$\eta_l\in \CH^{\otimes_q l}$,
$$\lim_n \langle \Psi,\, z_n \Phi \rangle_q = \langle \chi_k,\,\Phi\rangle_q.\langle  \Psi,\,\eta_l\rangle_q,\quad \forall\, k,l\geq 0.$$

Now we will use some arguments to avoid the use of $q$-symetrization operators to identify the vectors. Since $z_n$ is bounded, this justifies that the $w^*$-limit exists and has rank 1. As $z_n=z_n^*$, we have $\chi_l=\eta_l$ for all $l\geq 0$. Taking $\Phi=\Omega$, we have that $z_n\Omega$ is weakly converging to {$\zeta=\langle \chi_0,\Omega\rangle_q\big(\oplus_{m=0}^\infty\chi_m\big)$}. 

We want to identify $\xi = \oplus_{m=0}^\infty \chi_m$ as $z_n\overset{w.o.t.}\rightarrow T_{\xi}$ $($the rank-one limit$)$, and to do so we identify $\zeta$. To find $\zeta$, we want to consider $\lambda^{-n/2}(1-q)^n\langle W(\Psi) \bar{e}^n, \,\bar{e}^n\rangle_q$. This is a bounded sequence $($see Lemma \ref{bnd1}$)$.  

Note that if $l=0$ (i.e., $\Psi=\Omega$), then $\lim_n\lambda^{-n/2}(1-q)^n\langle W(\Psi) \bar{e}^n,\,\bar{e}^n\rangle_q=d_\infty$. 
Let $l>0$. As above, we can use the Wick formula for $\Psi$ and the situation as in Lemma \ref{comp}. We get that $u_{\sigma,\,j}(\lambda^{-n/4}\bar{e}^n)$ is $0$ unless $f_{\sigma(j+1)}=\cdots=f_{\sigma(l)}=e$. If this is so, $u_{\sigma,\,j}(\lambda^{-n/4}\bar{e}^n)=\frac{[n]_q!}{[n-l+j]_q!}\lambda^{(l-j)/2-n/4}\Psi_j\bar{e}^{n-l+j}$ $($Lemma \ref{Adjpowers}$)$. Taking scalar product with $\lambda^{-n/4}\bar{e}^n$, recalling that $\|\bar{e}^n\|_q^2= [n]_q!\lambda^{n/2}$ and computing again as in Lemma \ref{comp}, we have 
$$\lim_n \lambda^{-n/2}(1-q)^n\langle u_{\sigma,\,j}(\bar{e}^n), \,\bar{e}^n\rangle_q=
\delta_{2j=l}\, d_\infty\lambda^{j/2}(1-q)^{-j}\langle \Psi_j,\,\frac {\bar{e}^{j}}{\|\bar{e}^{j}\|_q^2}\rangle_q\langle \Psi_j^c,\, \frac {e^{j}}{\|{e}^{j}\|_q^2} \rangle_q.$$

Thus, comparing with Eq. \eqref{Wicklim} $($setting $k=0)$ we have 
$$\lim_n \langle \Psi ,\,z_n \Omega\rangle_q=d_\infty \lim_n
\lambda^{-n/2}(1-q)^n\langle W(\Psi) \bar{e}^n,\,\bar{e}^n\rangle_q.$$ 
But using Eq. \eqref{modulartheory}, we have
$$\langle W(\Psi) \bar{e}^n,\,\bar{e}^n\rangle_q=\langle \Psi,\,W(\bar{e}^n) W_r(\bar{e}^n)^*\Omega\rangle_q=\lambda^n\langle \Psi,\,W(\bar{e}^n) W(e^n)\Omega\rangle_q.$$

Since $S_n^*(\Omega)=\lambda^{n/2}(1-q)^n W(\bar{e}^n) W(e^n)\Omega$ $($use Lemma \ref{wen} or Eq. \eqref{modulartheory}$)$, we get that $\zeta=d_\infty S^*(\Omega)$. To compute its value, one can use Lemma \ref{Adjpowers} and Lemma \ref{wen} to derive 
\begin{align*}
\lambda^{n/2}(1-q)^nW(\bar e ^n)e^n=\sum_{k=0}^n (1-q)^k\frac{d_n^2}{d_{n-k}d_k^2}\lambda^{k/2} \bar e^k e^k.
\end{align*}
By taking the limit in $n$ and using the dominated convergence theorem, we have $$\zeta = d_\infty^2 \sum_{k=0}^\infty c_k^2(1-q)^k \lambda^{k/2} {\bar e}^k e^k,$$
and this expression tallies with the expression in $(2)$ of Rem. \ref{Rem1}.	

Comparing the two expressions for $\zeta$ we have $\langle\chi_0,\Omega\rangle_q\chi_0 = d_\infty^2\Omega$. Since $\chi_0 = \kappa\Omega$ where $\kappa\in \C$, it follows that $\abs{\kappa}^2=d_\infty^2$ $($scalar product is linear on the right$)$, i.e., $\abs{\kappa}=d_\infty$. Consequently,
\begin{align*}
\xi = \frac{1}{\langle \chi_0 , \Omega\rangle_q} \zeta =\frac{1}{\bar \kappa}d_\infty^2 \sum_{k=0}^\infty c_k^2(1-q)^k \lambda^{k/2} {\bar e}^k e^k = e^{-i \arg{(\kappa)}} d_\infty \sum_{k=0}^\infty c_k^2(1-q)^k \lambda^{k/2} {\bar e}^k e^k.
\end{align*} 
Finally, note that a rank-one self-adjoint operator is uniquely determined upto a phase factor of the associated vector. Also note that $\xi$ is not an unit vector, thus $T_\xi$ is a positive scalar multiple of $P_{\frac{\xi}{\norm{\xi}_q}}$. 
\end{proof}


%
%

%

The next lemma will be used particularly when $\dim(\CH_\R)\geq 3$. It describes some amount of mixing available when $\dim(\CH_\R)$ is `large', which frees one from any bargain with the parameters $\lambda$ and $q$ to decide the  factoriality unlike the case when $\dim(\CH_\R)=2$.

\begin{Lemma}\label{weakly-1}
Suppose $\text{dim}(\CH_\R) \geq 3$ with $(U_t)$ as in the set up. Let $0\neq \eta \in \CH_\R $ be such that $ \eta \perp \bar{e}, e$ in $\langle\cdot,\cdot\rangle_q$. Further, let $\{ u_n\}$ be a sequence of unitaries in $vN(W(\eta))$ converging to $0$ in the w.o.t. Then, $u_n W(\xi) u_n^* \rightarrow c 1$ in the $w.o.t$ for some $c>0$, where $\xi=d_\infty \sum_{k=0}^\infty c_k^2(1-q)^k \lambda^{k/2} {\bar e}^k e^k$.
\end{Lemma}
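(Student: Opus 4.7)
The plan is to isolate the scalar part of $W(\xi)$ and show that the remainder is mixed away by conjugation by $u_n$. Split
\begin{equation*}
\xi = d_\infty \Omega + \xi_0, \qquad \xi_0 = d_\infty\sum_{k\geq 1}c_k^2(1-q)^k\lambda^{k/2}\bar e^k e^k,
\end{equation*}
so that $W(\xi) = d_\infty\cdot 1 + W(\xi_0)$. By Lemma \ref{bnd2}, $\|(1-q)^k W(\bar e^k e^k)\|$ is uniformly bounded in $k$, while $c_k^2\lambda^{k/2}\leq C_q^2\lambda^{k/2}$ is summable since $\lambda<1$. Hence the series
\begin{equation*}
W(\xi_0) = \sum_{k\geq 1} d_\infty c_k^2\lambda^{k/2}\bigl[(1-q)^k W(\bar e^k e^k)\bigr]
\end{equation*}
converges absolutely in operator norm; this absolute convergence is preserved by the unitary conjugation $u_n(\cdot)u_n^*$, uniformly in $n$. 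A dominated-convergence argument then reduces the problem to showing that, for each fixed $k\geq 1$, $u_n W(\bar e^k e^k)u_n^* \to 0$ in the w.o.t. Once this is done, $u_n W(\xi)u_n^*\to d_\infty\cdot 1$ w.o.t. and we may take $c=d_\infty>0$ (strictly positive since $|q|<1$).

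For the mixing claim, I would use the orthogonality $\eta\perp e,\bar e$ in $\langle\cdot,\cdot\rangle_U$ together with the commutation relation \eqref{Commutationrelation}, which gives $c_q(\eta)^*c_q(e) = q\, c_q(e)c_q(\eta)^*$ and $c_q(\eta)^*c_q(\bar e) = q\, c_q(\bar e)c_q(\eta)^*$, together with their adjoints. Approximating $u_n$ by polynomials in $c_q(\eta), c_q(\eta)^*$ (strong-$*$ density in $vN(W(\eta))$) and using the Wick expansion of $W(\bar e^k e^k)$ from Lemma \ref{ween}, the $\eta$-letters can be commuted past the $(e,\bar e)$-letters at the cost of only powers of $q$. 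Fixing simple-tensor test vectors $\zeta_1,\zeta_2$ whose letters lie in an orthonormal basis extending $\{\eta,e,\bar e\}$, the matrix element $\langle u_n W(\bar e^k e^k) u_n^*\zeta_1,\zeta_2\rangle$ then reduces to a finite linear combination, with $n$-independent coefficients, of matrix coefficients $\langle u_n\chi',\chi\rangle$ in which $\chi,\chi'$ depend on $k,\zeta_1,\zeta_2$ but not on $n$. Since $u_n\to 0$ in the w.o.t., each such coefficient tends to zero, and a density argument together with the uniform bound $\|u_n W(\bar e^k e^k)u_n^*\|\leq \|W(\bar e^k e^k)\|$ promotes the convergence to w.o.t. convergence on all of $\CF_q(\CH)$.

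The main obstacle is the combinatorial bookkeeping in the second paragraph: one must carefully track the $q$-factors arising from reordering creation and annihilation operators across the Wick expansion, and verify that the residual cross terms -- those not reducing to matrix coefficients of $u_n$ acting on fixed vectors -- vanish identically. This reflects the fact that the subalgebras $vN(W(\eta))$ and $\Gamma_q(\R^2, U_t)''$ occupy a $q$-independent position when $\eta\perp e,\bar e$, and relies on the same style of manipulation as in Lemma \ref{comp} and the proof of Lemma \ref{sxi}.
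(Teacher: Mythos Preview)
Your reduction (first paragraph) is correct and matches the paper exactly: split off the $k=0$ term as $d_\infty\cdot 1$, use Lemma~\ref{bnd2} together with $\sum_k c_k^2\lambda^{k/2}<\infty$ to get norm convergence of the series for $W(\xi_0)$, and reduce to showing $u_nW(\bar e^ke^k)u_n^*\to 0$ w.o.t.\ for each $k\geq 1$. The identification $c=d_\infty$ is also the paper's conclusion.

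The gap is in your mixing argument. The claim that
\[
\langle u_n W(\bar e^k e^k)u_n^*\zeta_1,\zeta_2\rangle
\]
reduces to a finite $n$-independent combination of matrix coefficients $\langle u_n\chi',\chi\rangle$ cannot hold: the expression carries $u_n$ on both sides, so after moving one copy across it becomes $\langle W(\bar e^ke^k)u_n^*\zeta_1,u_n^*\zeta_2\rangle$, which is \emph{bilinear} in the coefficients of $u_n^*\Omega$, not linear. Weak convergence $u_n\to 0$ does not by itself control such quadratic forms. Moreover, the $q$-commutation relation~\eqref{Commutationrelation} only intertwines $c_q(\cdot)^*$ with $c_q(\cdot)$; two creation operators $c_q(\eta)$ and $c_q(e)$ do \emph{not} $q$-commute, so ``commuting $\eta$-letters past $(e,\bar e)$-letters'' in the Wick expansion is not available in the form you describe. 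Approximating $u_n$ by polynomials does not help either, since the approximation is $n$-dependent.

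The paper's route is different and avoids this. Using the commutant one writes
\[
\langle \Phi,\,u_nW(\bar e^ke^k)u_n^*\Psi\rangle_q
=\langle W_r(\Psi)^*W_r(\Phi)u_n^*\Omega,\;W(\bar e^ke^k)u_n^*\Omega\rangle_q,
\]
so both entries live over $u_n^*\Omega\in\overline{\mathrm{span}\{\eta^m:m\geq 0\}}$. Because $e,\bar e\perp\eta$, every annihilation term in the Wick expansion of $W(\bar e^ke^k)$ kills $\eta^m$, leaving $W(\bar e^ke^k)\eta^m=\bar e^ke^k\eta^m$. The remaining inner products $\langle \bar e^ke^k\eta^m,\,(\cdot)\eta^{m'}\rangle_q$ then force $\bar e^ke^k$ to cross all $m$ copies of $\eta$, contributing a factor $q^{2km}$; this makes the induced operator on $\overline{\mathrm{span}\{\eta^m\}}$ compact, and compactness together with $u_n^*\Omega\to 0$ weakly gives the conclusion. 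The paper packages this last step by invoking the orthogonality $\bar e^ke^k\perp\mathrm{span}\{\eta^l\}$ and \cite[Lemma~1]{SW}. If you want to salvage your approach, replace the (incorrect) reduction to first-order coefficients $\langle u_n\chi',\chi\rangle$ by this compactness argument.
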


\begin{proof}
Firstly, note that $vN(W(\eta))$ is a diffuse abelian von Neumann algebra \cite[\S4]{bik-kunal}. Thus, the desired sequence $\{u_n\}$ exists as in the statement. 

By Lemma \ref{sxi}, the series defining $\xi$ is convergent in $\mathcal{F}_q(\CH)$ in $\norm{\cdot}_q$. 
Further, $\{ (1-q)^k W(\bar{e}^k e^k )\}$ is a bounded sequence by Lemma \ref{bnd2}. Since $c_k$ is a bounded sequence, so $d_\infty\sum_{k=0}^N c_k^2(1-q)^k\lambda^{k/2} W(\bar{e}^k e^k)$ is convergent in norm as $N\rightarrow\infty$ in $M_q^\varphi$. Using Thm. \ref{CentraliserDescribe}, it follows that 
\begin{align*}
W(\xi) = d_\infty \sum_{k=0}^\infty c_k^2(1-q)^k\lambda^{k/2} W(\bar{e}^k e^k )\in M_q^\varphi.
\end{align*}
Therefore, it is sufficient to show that $(1-q)^ku_n W(\bar{e}^k e^k ) u_n^*$ converges to $0$ in the $w.o.t$ as $ n\rightarrow \infty$ for each $k \in\N$. Further, the boundedness of the sequence entails that it is sufficient to verify 
the convergence with vectors of the form $\Psi = e_1 e_2\cdots e_l\in \CH^{\otimes_q l}$ and $\Phi = f_1 f_2\cdots f_m\in \CH^{\otimes_q l^\prime}$ where $e_i, f_j \in \mathcal{O}$ and $l,l^\prime\geq 0$.

 Now, note that 
\begin{align*}
\langle  \Phi,\, u_n W(\bar{e}^k e^k ) u_n^* \Psi \rangle_q&=\langle u_n^* \Phi,\,W(\bar{e}^k e^k ) u_n^* \Psi  \rangle_q\\
&=\langle W_r(\Phi)u_n^* \Omega,\,W(\bar{e}^k e^k ) W_r(\Psi) u_n^* \Omega \rangle_q\\&=\langle W_r(\Psi)^*W_r(\Phi)u_n^* \Omega,\,W(\bar{e}^k e^k ) u_n^* \Omega \rangle_q.
\end{align*}
When $k>0$, this last quantity goes to 0 as $n\to \infty$ as
$\bar{e}^k e^k \bot \,span \{\eta^l:\ l\geq0\}$ $($see \cite[\S4]{bik-kunal}$)$. This a consequence of
the Wick formula and Lemma 1 in \cite{SW} as explained around (4.2)
there.

As $\text{span}_\C\mathcal{O}$ is dense in $\mathcal{F}_q(\CH)$  and $(1-q)^{k}W({\bar e}^k e^k)$ is uniformly bounded $($Lemma \ref{bnd2}$)$, it follows that $(1-q)^ku_n W(\bar{e}^k e^k ) u_n^*\rightarrow 0$ as $n\rightarrow \infty$ in the $w.o.t$ for each $k\geq 1$. Clearly $c=d_\infty$ and the proof is complete. 
\end{proof}

\section{Factoriality}\label{Secfactor}


This is the main section of this paper. Here we establish the factoriality of $M_q$. \textit{The set up of \S\ref{casebig3} will be in force in this section}. As informed earlier, we will have to compromise with the parameter $\lambda$ to assert the factoriality of $M_q$ when $\dim(\CH_\R)=2$. In this case though, we will directly deduce that $M_q^\varphi$ is irreducible. When $\dim(\CH_\R)\geq 3$, $M_q$ is a factor \textit{regardless} of the value of the parameter $\lambda$ defining $(U_t)$ on $\R^2$ and $-1<q<1$. However, deducing that $M_q^\varphi$ is irreducible will not be direct and will be constrained. When $q=0$, $M_0$ is the free Araki-Woods factor \cite{Shlyakhtenko} and there is nothing to prove in this case. 

As an outcome of the results in \S\ref{mainsec}, we have the following:

\begin{Lemma}\label{car}
With $\xi$ as in Lemma \ref{sxi}, let $P$ be the orthogonal projection onto $[M_q^\varphi M_q^\prime\xi ]^\perp$. Then, $(M_q^\varphi)^\prime\cap M_q$ is trivial if and only if $P=0$. In particular, $M_q^\varphi$ and $M_q$ are factors if and only if $P=0$. 
\end{Lemma}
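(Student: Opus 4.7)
The plan is to identify $P$ with a natural projection inside $(M_q^\varphi)' \cap M_q$ that measures whether the vector $\xi$ from Lemma \ref{sxi} is cyclic for $M_q^\varphi \vee M_q'$; commuting with the rank-one operator $T_\xi$ will then pin any element of $(M_q^\varphi)' \cap M_q$ down to a scalar.

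First I would set $K := [M_q^\varphi M_q'\xi]$, so that $P = P_{K^\perp}$. The crucial preliminary observation is that $K$ is invariant both under $M_q^\varphi$ and under $M_q'$: invariance under $M_q^\varphi$ is obvious from the definition, while for $M_q'$ I would use that $M_q^\varphi \subseteq M_q$ commutes with $M_q'$, so $M_q'\,M_q^\varphi M_q' = M_q^\varphi M_q' M_q' \subseteq M_q^\varphi M_q'$. Hence $P$ commutes with both algebras, so $P \in (M_q^\varphi)' \cap (M_q')' = (M_q^\varphi)' \cap M_q$. This already gives the forward implication: if $(M_q^\varphi)' \cap M_q = \C 1$ then $P$ is a scalar projection; since $\xi \in K$ (taking $a = b = 1$) and $\xi \neq 0$ because its $\Omega$-component equals $d_\infty \neq 0$ by Lemma \ref{sxi}, we have $K \neq \{0\}$, forcing $P = 0$.

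For the converse, I would assume $P = 0$ and fix an arbitrary $x \in (M_q^\varphi)' \cap M_q$. The key step is to note, via Theorem \ref{CentraliserDescribe}, that $W(\bar e^n e^n) \in M_q^\varphi$, and via Theorem \ref{commutant}, that $W_r(\bar e^n e^n) \in M_q'$. Consequently $x$ commutes with every operator $z_n := (1-q)^{2n} W_r(\bar e^n e^n) W(\bar e^n e^n)$. Separate $w^*$-continuity of multiplication, combined with the $w^*$-convergence $z_n \to T_\xi$ furnished by Lemma \ref{sxi}, then yields $x T_\xi = T_\xi x$. Since $T_\xi$ is a positive scalar multiple of the rank-one projection onto $\C\xi$, this forces $x\xi = \lambda \xi$ for some $\lambda \in \C$. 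A short computation $x(ab\xi) = ab x \xi = \lambda ab\xi$ for $a \in M_q^\varphi$, $b \in M_q'$ then identifies $x$ with $\lambda 1$ on the dense subspace $K$, yielding $x = \lambda 1$.

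The `in particular' statement follows as an immediate corollary: both $Z(M_q)$ and $Z(M_q^\varphi) = M_q^\varphi \cap (M_q^\varphi)'$ are contained in $(M_q^\varphi)' \cap M_q$, so its triviality forces factoriality of both algebras. I do not anticipate any genuine technical obstacle in carrying out this plan; the whole nontrivial content has been absorbed into the rank-one nature of $T_\xi$ established in Lemma \ref{sxi} (via the norm estimates of Lemma \ref{bnd2} and the Wick-formula computations), so what remains is purely formal manipulation of commutants.
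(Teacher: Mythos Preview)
Your proposal is correct and follows essentially the same route as the paper: show $P\in (M_q^\varphi)'\cap M_q$ (the paper simply asserts this is ``clear''), use Lemma~\ref{sxi} to get that any $x\in (M_q^\varphi)'\cap M_q$ commutes with $T_\xi$ (the paper phrases this as $T_\xi\in M_q^\varphi\vee M_q'$, while you argue directly via commutation with each $z_n$ and passage to the $w^*$-limit), deduce $x\xi=\lambda\xi$, and propagate to $M_q^\varphi M_q'\xi$ by commutation. Your added detail on invariance of $K$ and on the ``in particular'' clause is accurate and matches what the paper leaves implicit.
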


\begin{proof}
It is clear that $P\in \mathcal (M_q^\varphi)^\prime\cap M_q$. Suppose that $(M_q^\varphi)^\prime\cap M_q=\C1$. Then, $P=0$ as $\xi\neq 0$ and $M_q^\varphi\text{ and } M_q$ are factors. 

Conversely, by Lemma \ref{sxi} it follows that $T_\xi\in M_q^\varphi\vee M_q^\prime$. Let $x\in
(M_q^\varphi)^\prime\cap M_q$ so that $xT_\xi=T_\xi x$. In particular, 
$x(\xi)=\lambda \xi$ for some $\lambda\in \C$. Then, for any $m\in
M_q^\varphi$ and $n\in M_q^\prime$, $x(mn(\xi))=\lambda mn(\xi)$.
If $P=0$, this means that $x(\eta)=\lambda \eta$ for all $\eta\in\mathcal{F}_q(\CH)$ and thus $x=\lambda 1$.
\end{proof}

Now we are in position to state the first theorem on factoriality. 

\begin{Theorem}\label{Factor2dim}
Let $q\neq 0$. If $\dim(\CH_\R)=2$ and $(U_t)$ is ergodic then $(M_q^\varphi)^\prime\cap M_q=\C1$ $($and hence $M_q^\varphi$ and $M_q$ are factors$)$ when 
\begin{equation*}
\lambda< \begin{cases}(1+\frac{C_q}{d_\infty})^{-2},\quad&\text{if }q>0;\\
(1+\frac{(C_q(1-q))^2D(q)}{d_\infty})^{-2},\quad &\text{if }q<0.\end{cases}
\end{equation*}
\end{Theorem}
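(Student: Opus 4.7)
The plan is to invoke Lemma~\ref{car} and show that the projection $P$ onto $[M_q^\varphi M_q^{\prime}\xi]^\perp$ is zero. Since $\xi$ lies in the cyclic subspace, $P\xi=0$, and $P\in(M_q^\varphi)^{\prime}\cap M_q$. Assuming $P\neq 0$ for contradiction, I plan to produce an element $W(\xi)\in M_q^\varphi$ that is invertible in $M_q$ and satisfies $W(\xi)\Omega=\xi$; since $P$ then commutes with $W(\xi)$, one obtains $0=P\xi=PW(\xi)\Omega=W(\xi)P\Omega$, and because $\Omega$ is separating for $M_q$, this forces $W(\xi)P=0$, hence $P=0$ by invertibility.

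To produce $W(\xi)$, consider the operators $U_n:=(1-q)^n\lambda^{n/2}W(\bar e^n)W(e^n)$. Using the modular action one computes $\sigma_t^\varphi(W(e))=\lambda^{it}W(e)$ and $\sigma_t^\varphi(W(\bar e))=\lambda^{-it}W(\bar e)$ from the eigenvalues $\lambda^{-1},\lambda$ of the analytic generator on $e,\bar e$, so $U_n\in M_q^\varphi$; moreover $W(\bar e)^*=W(e)$ yields $U_n=(1-q)^n\lambda^{n/2}(W(e^n))^*W(e^n)\geq 0$. By Lemma~\ref{sxi}, $U_n\Omega\to\xi$ in $\|\cdot\|_q$. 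Combining the series $\xi=d_\infty\sum_{k\geq 0}c_k^2(1-q)^k\lambda^{k/2}\bar e^k e^k$ with the uniform bound from Lemma~\ref{bnd2} on $\|(1-q)^k W(\bar e^k e^k)\|$, the series $W(\xi):=d_\infty\sum_{k\geq 0}c_k^2(1-q)^k\lambda^{k/2}W(\bar e^k e^k)$ converges in operator norm to an element of $M_q^\varphi$ satisfying $W(\xi)\Omega=\xi$; a standard argument (boundedness of $U_n$ plus commutation with $M_q^\prime$, extended from $M_q^\prime\Omega$ by density) gives $U_n\to W(\xi)$ strongly, and hence $W(\xi)$ is positive self-adjoint.

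The crux is the invertibility of $W(\xi)$. Splitting $W(\xi)=d_\infty\cdot 1+R$ with $R=d_\infty\sum_{k\geq 1}c_k^2(1-q)^k\lambda^{k/2}W(\bar e^k e^k)$, I estimate $\|R\|$ using $c_k\leq C_q$ together with the uniform bound $\|(1-q)^k W(\bar e^k e^k)\|\leq K_q$ coming from Lemma~\ref{bnd2}; this gives $\|R\|$ bounded by a multiple of $d_\infty\sqrt{\lambda}/(1-\sqrt{\lambda})$ with an explicit $q$-dependent prefactor. Under the hypothesis on $\lambda$, this bound is strictly less than $d_\infty$, so $W(\xi)\geq d_\infty-\|R\|>0$ is invertible in $M_q^\varphi$, which closes the argument. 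The main obstacle is the quantitative tracking of $q$-dependent constants: the invertibility of $W(\xi)$ is the operator-theoretic shadow on the vacuum of Lemma~\ref{s}'s invertibility of $S_\infty$ (both thresholds are governed by the same geometric series $\sum \sqrt{\lambda}^{\,k}$), but it is not automatic from Lemma~\ref{s} alone and requires the separate bound on $\|R\|$ to be matched carefully with the sharp threshold $\lambda<(1+C_q/d_\infty)^{-2}$ (and its $q<0$ analogue).
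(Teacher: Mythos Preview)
Your logical skeleton is sound: if $W(\xi)\in M_q^\varphi$ is invertible, then $0=P\xi=PW(\xi)\Omega=W(\xi)P\Omega$ gives $P\Omega=0$ and hence $P=0$. The construction of $W(\xi)$ as a norm-convergent series via Lemma~\ref{bnd2} is also correct. However, the quantitative step fails to reach the stated threshold. Your bound on $\|R\|$ uses $c_k\leq C_q$ together with the uniform constant $K_q$ furnished by Lemma~\ref{bnd2}, and the proof of Lemma~\ref{bnd2} produces a constant of order at least $C_q^6/((1-|q|)(1-\sqrt\lambda))$. Feeding this into $\|R\|\leq d_\infty C_q^2K_q\sqrt\lambda/(1-\sqrt\lambda)$ gives a threshold of the form $\lambda<(1+C_q^2K_q)^{-2}$, which is strictly worse than $(1+C_q/d_\infty)^{-2}$; there is no way to ``match carefully'' because the operator norms $\|(1-q)^kW(\bar e^k e^k)\|$ are genuinely harder to control than what you need. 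So your argument proves factoriality for \emph{some} small $\lambda$, but not for the range announced in the theorem.

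The paper avoids this by not attempting to invert $W(\xi)$ at all. It exploits $P\xi=0$ differently: from $\langle P\xi,\Omega\rangle_q=0$ and the identification $\xi=S_\infty^*\Omega=\lim_n\lambda^{n/2}(1-q)^nW(\bar e^n)W(e^n)\Omega$, together with $P\in M_q$, one gets
\[
0=\lim_n\lambda^{n/2}(1-q)^n\|W(e^n)P\Omega\|_q^2.
\]
Then one applies the bounded operators $\lambda^{n/4}(1-q)^{n/2}c_q(e)^{*n}$ (Lemma~\ref{bnd1}(i)) to the vanishing sequence $\lambda^{n/4}(1-q)^{n/2}W(e^n)P\Omega$, obtaining $S_nP\Omega\to0$ and hence $S_\infty P\Omega=0$. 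The invertibility condition of Lemma~\ref{s} for $S_\infty$ is exactly the threshold in the statement, and the norm estimates entering there (on $c_q(e)^k$, $c_q(\bar e)^k$, and $T$) are much sharper than any available estimate for $W(\bar e^k e^k)$. In short, the paper trades invertibility in $M_q$ for invertibility of an auxiliary Fock-space operator built from creation and annihilation operators, and that is precisely what buys the announced range of $\lambda$.
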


\begin{proof}
Consider $P$ from Lemma \ref{car}. Then, we have $P(\xi)=0$. Thanks to Lemma \ref{sxi} and Eq. \eqref{modulartheory}, we have that 
\begin{eqnarray*}
0=\langle P(\xi),\, \Omega\rangle_q&=& \lim_n  \lambda^{n/2}(1-q)^n 
\langle P W(e^n)^*W(e^n)\Omega,\,\Omega\rangle_q\\&=&\lim_n  \lambda^{n/2}(1-q)^n 
\langle  W(e^n)^*W(e^n)P\Omega,\,P\Omega\rangle_q\\ &=& \lim_n  \lambda^{n/2}(1-q)^n \|W(e^n)P(\Omega)\|_q^2.
\end{eqnarray*}

Thus, $\eta_n=\lambda^{n/4}(1-q)^{n/2}W(e^n)P(\Omega)$ goes to 0 in $\norm{\cdot}_q$. Thanks to Lemma \ref{bnd1} $(i)$, we still get that $\lambda^{n/4}(1-q)^{n/2}c_q(e)^{*n}\eta_n= S_n(P(\Omega))$ goes to 0. Consequently, $S_\infty(P(\Omega))=0$. But $S_\infty$ is invertible by Lemma \ref{s} and the hypothesis, and hence $P(\Omega)=0$ forcing $P=0$. Now use Lemma \ref{car}.
\end{proof}

Now, we assume that $\CH_\R= \mathbb{R}^2\oplus \mathcal{K}_\R$ where $\mathcal{K}_\R$ is a non-zero real Hilbert space, and $\mathbb{R}^2$ is reducing subspace for $(U_t)$ with associated sub representation being ergodic i.e., $(\mathbb{R}^2, U_t)$ is as in \S\ref{casebig2}. It should be noted that unlike Thm. \ref{Factor2dim}, the invertibility of the operator $S_\infty$ has \textit{no role} to play in deciding the factoriality in this case.

\begin{Theorem}\label{gre3factor}
Suppose  $\text{dim}(\CH_\R) \geq 3$. Then $M_q$ is a factor. 	
\end{Theorem}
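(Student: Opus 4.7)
By Lemma~\ref{car}, factoriality of $M_q$ is equivalent to showing that the orthogonal projection $P$ onto $[M_q^\varphi M_q'\xi]^\perp$ vanishes, with $\xi$ as in Lemma~\ref{sxi}. Recall $P\in (M_q^\varphi)'\cap M_q$ commutes with both $M_q^\varphi$ and $M_q'$, and $P\xi=0$. Since $\dim(\CH_\R)\ge 3$, the space $\CK_\R$ is non-zero; fix a non-zero $\eta\in\CK_\R$. Because $\CK_\R\perp\R^2$ in the $U$-inner product, $\eta\perp e,\bar e$. The abelian von Neumann algebra $vN(W(\eta))\subset M_q$ is diffuse, so I can choose a sequence of unitaries $u_n\in vN(W(\eta))$ with $u_n\to 0$ in the weak operator topology. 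Lemma~\ref{weakly-1} then gives $u_n W(\xi)u_n^*\to d_\infty\mathbf 1$ in w.o.t.

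Next, I identify $P$ with the spectral projection $\chi_{\{0\}}(W(\xi))$ via the chain of equalities $\overline{(M_q^\varphi M_q')\xi}=\overline{W(\xi)(M_q^\varphi M_q')\Omega}=\overline{W(\xi)\CF}$, using that $W(\xi)\in M_q^\varphi$ commutes with $M_q'$ and that $M_q'\Omega\subset M_q^\varphi M_q'\Omega$ is dense in $\CF$. Hence $P=0$ iff $W(\xi)$ has trivial kernel. Moreover, the proof of Lemma~\ref{weakly-1} applies to any element of $M_q^\varphi$ whose GNS vector decomposes as $\alpha\,\Omega+\sum_{j+l\ge 1}\alpha_{j,l}\,\bar e^j e^l$, yielding $u_n W(\chi)u_n^*\to\alpha\mathbf 1$ in w.o.t. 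Applied to all powers $W(\xi)^k$ and followed by polynomial and then bounded Borel functional calculus, this g
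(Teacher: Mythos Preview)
Your argument has a genuine gap, and in fact you are attempting to prove something stronger than the theorem asserts.

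\textbf{The overshoot.} Invoking Lemma~\ref{car} as stated would yield $(M_q^\varphi)'\cap M_q=\C1$, i.e.\ irreducibility of the centralizer, not merely factoriality of $M_q$. The paper does \emph{not} claim this for arbitrary $\dim(\CH_\R)\ge 3$; see Theorem~\ref{trivial-rel-comm} and Remark~\ref{cent}, where extra hypotheses on the almost periodic part are needed. So either you have found a genuinely new argument, or there is an error. It is the latter.

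\textbf{The faulty identification.} Your chain $\overline{(M_q^\varphi M_q')\xi}=\overline{W(\xi)(M_q^\varphi M_q')\Omega}$ is not justified: since $\xi=W(\xi)\Omega$ and $W(\xi)\in M_q$ commutes with $M_q'$, one has $mn\xi=mW(\xi)n\Omega$, not $W(\xi)mn\Omega$; the operator $W(\xi)\in M_q^\varphi$ has no reason to commute with $m\in M_q^\varphi$. What survives is the one-sided inclusion $\overline{W(\xi)\CF}\subseteq [M_q^\varphi M_q'\xi]$, hence only $P\le \chi_{\{0\}}(W(\xi))$. Granted, this inequality would still suffice \emph{if} you could show $\ker W(\xi)=0$.

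\textbf{The real gap.} Your route to $\ker W(\xi)=0$ is to pass from $u_n W(\xi)^k u_n^*\to c_k 1$ (w.o.t.) for all $k$ to $u_n f(W(\xi)) u_n^*\to (\cdots)1$ for bounded Borel $f$, then take $f=\chi_{\{0\}}$. This step fails: weak operator convergence on polynomials does not propagate to discontinuous $f$; indeed $u_n \chi_{\{0\}}(W(\xi)) u_n^*=\chi_{\{0\}}(u_nW(\xi)u_n^*)$ is a sequence of projections, and nothing prevents it from converging weakly to a nontrivial positive contraction. The paper itself works with $\ker W(\xi)$ in the proof of Theorem~\ref{semi} without asserting it is trivial.

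\textbf{What the paper does instead.} The paper avoids the centralizer altogether and considers the larger space $[M_qM_q'\xi]$. The key trick is to choose the $u_n$ to be \emph{analytic} unitaries (specifically $u_n=e^{-ins_q(\eta)}$), so that $J\sigma_{i/2}^\varphi(u_n)^*J\in M_q'$. Then
\[
J\sigma_{i/2}^\varphi(u_n)^*J\,u_n^*\,\xi \;=\; u_n^*W(\xi)u_n\Omega \;\in\; M_qM_q'\xi,
\]
and by Lemma~\ref{weakly-1} this converges weakly to $c\Omega$ with $c>0$. Hence $\Omega\in[M_qM_q'\xi]$, the projection onto $[M_qM_q'\xi]^\perp$ is zero, and the argument of Lemma~\ref{car} (with $M_q$ in place of $M_q^\varphi$) gives $\mathcal Z(M_q)=\C1$. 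The analyticity of $u_n$ is precisely what lets one manufacture the needed element of $M_q'$ and is the idea your approach is missing.
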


\begin{proof}
Let $P$ be the orthogonal projection onto $[M_q M_q^\prime\xi]^\perp$, where $\xi$ is as in Lemma  \ref{weakly-1}. Since $dim(\CH_\R)\geq 3$, choose $0\neq \eta\in \CH_\R$ such that $s_q(\eta)$ is analytic for $(\sigma_t^\varphi)$ and $\eta\perp \{e,\bar e\}$ in $\langle\cdot,\rangle_q$. Then, $u_n = e^{-in s_q(\eta)}$ is an analytic sequence of unitaries 
such that $u_n\rightarrow 0$ in the $w^*$-topology. The $w^*$-convergence holds as the spectral measure of $s_q(\eta)$ is Lebesgue absolutely continuous $($see \cite[\S4]{bik-kunal}$)$. By Lemma \ref{weakly-1}, it follows that 
\begin{align*}
J\big(\sigma_{\frac{i}{2}}^\varphi(u_n)\big)^*J u_n^* \xi=u_n^* W(\xi) u_n\Omega\rightarrow c\Omega \text{ weakly as }n\rightarrow\infty.
\end{align*}
It follows that $[M_q M_q^\prime\xi]$ is weakly dense in $\mathcal{F}_q(\CH)$, and hence norm dense in $\mathcal{F}_q(\CH)$ by a theorem of Mazur. It follows that $P=0$. 

From Lemma \ref{sxi}, we have $T_\xi\in M_q^\varphi\vee M_q^\prime$. Let $x\in \mathcal{Z}(M_q)$. Then, $xT_\xi = T_\xi x$. Replacing the role of $M_q^\varphi$ by $M_q$ in the proof of Lemma \ref{car} $($second paragraph$)$, the argument is verbatim. 
\end{proof}	

\begin{Remark}\label{opencase}
The following comments are in order. 
\begin{enumerate}
\item In view of the main results in \cite{bik-kunal,ER} and the results in this section, the factoriality problem for $\Gamma_q(\CH_\R, U_t)^{\prime\prime}$ remains open \text{only} in the case when $dim(\CH_\R)=2$, $(U_t)$ is ergodic and $\lambda$ is \textit{not small} in the sense of Thm. \ref{Factor2dim}.
\item When $dim(\CH_\R)=2$, our proof is different than the existing proofs of the factoriality of $M_q$ under various assumptions $($in \cite{bik-kunal,Hiai,ER,Shlyakhtenko, Ne15}$)$ because of the role of the operator $S_\infty$ in the proof, and this process will not work when $q=0$ and $\lambda>\frac 12$ (see Rem. \ref{Rem1}). 
\item Let $\eta\in\CH_\R$ be such that $\norm{\eta}_{U}= 1\,(=\norm{\eta}_q)$ and $\eta$ is not fixed by $(U_t)$.  Then, either $(U_t)$ has a non-zero weakly mixing component or has a two-dimensional ergodic sub representation characterized by $\lambda\in (0,1)$. If $dim(\CH_\R)\geq 3$ or $dim(\CH_\R)=2$, $q\neq 0$ and $\lambda$ is `small' in the sense of Thm. \ref{Factor2dim} or $dim(\CH_\R)=2$ and $q=0$, then $vN(W(\eta))\subseteq M_q$ is a split inclusion $($see \cite[Thm. 4.6]{BM2}$)$. 
\end{enumerate}
\end{Remark}

\section{Revisiting Centralizer}\label{cent_irreducibel}

The first result on $M_q^\varphi$ is Thm. 3.2 of \cite{Hiai}. In \S\ref{SectionCentralizer} and in \cite[\S7]{bik-kunal} we have discussed on the structure of $M_q^\varphi$. In this section, we continue our discussion on $M_q^\varphi$. Note that $M_q^\varphi$ depends on the almost periodic component of $(U_t)$ $($see Thm. \ref{CentraliserDescribe}$)$. Thm. 7.1 of \cite{bik-kunal} says that if there is a non-trivial fixed point of $(U_t)$ and the almost periodic component of $(U_t)$ is at least two-dimensional then $(M_q^\varphi)^\prime\cap M_q =\C1$. 

Consequently, when $\dim(\CH_\R)=2$ the last statement together with Thm. \ref{Factor2dim} provide all that is known about $M_q^\varphi$. When the almost periodic part of $(U_t)$ is three-dimensional or five-dimensional one can apply \cite[Thm. 7.1]{bik-kunal}. Now we proceed to describe the structure of $M_q^\varphi$ when the almost periodic part of $(U_t)$ is sufficiently large. 

\begin{Theorem}\label{trivial-rel-comm}
Let $\CH_\R^{ap}\subseteq \CH_\R$ denote the almost periodic part of $(U_t)$. If $dim(\CH_\R^{ap})\geq 5$, then  $(M_q^\varphi)^\prime\cap M_q=\C1$. In particular, $M_q^\varphi$ is a factor.
\end{Theorem}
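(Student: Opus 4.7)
The plan is to mirror the proof of Theorem \ref{gre3factor} but using unitaries drawn from the centralizer itself; Lemma \ref{car} then directly reduces the claim to showing $[M_q^\varphi M_q^\prime\xi]=\mathcal F_q(\CH)$, where $\xi$ is the vector from Lemma \ref{sxi}. The argument splits naturally on whether $(U_t)$ has a non-trivial fixed vector in $\CH_\R^{ap}$.

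If $N_1\geq 1$ (i.e.\ $(U_t)$ has a non-trivial fixed vector), then since $\dim(\CH_\R^{ap})\geq 5\geq 2$, \cite[Thm.~7.1]{bik-kunal} applies and yields $(M_q^\varphi)^\prime\cap M_q=\C1$ directly. The remaining case is $N_1=0$: the hypothesis $2N_2=\dim(\CH_\R^{ap})\geq 5$ forces $N_2\geq 3$. I would adopt the setup of \S\ref{casebig3} with the first ergodic $\R^2$ as the specified subrep (with basis $e,\bar e$ of $\CH$ and parameter $\lambda$), leaving at least two further ergodic 2-dimensional subreps inside $\CK_\R$. From one of these I extract eigenvectors $e_2,\bar e_2$ of $A$ (with eigenvalues $\lambda_2^{-1},\lambda_2$) satisfying $\{e_2,\bar e_2\}\perp\{e,\bar e\}$ in $\langle\cdot,\cdot\rangle_U$.

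Next, set $\eta=\bar e_2 e_2+e_2\bar e_2\in M_q^\varphi\Omega$. Both summands lie in $\CW_0$ (eigenvalue product $\lambda_2\cdot\lambda_2^{-1}=1$), so Theorem \ref{CentraliserDescribe} places $W(\eta)$ in $M_q^\varphi$; since $J\eta=\eta$, $W(\eta)$ is self-adjoint, and it is analytic. I would then argue that $vN(W(\eta))$ is diffuse via a moment/spectral analysis of the Wick operator $W(\eta)$, obtaining a sequence of unitaries $u_n\in vN(W(\eta))\subset M_q^\varphi$ with $u_n\to 0$ weakly. The heart of the argument is a 2-tensor analog of Lemma \ref{weakly-1}: $u_n W(\xi)u_n^*\to c\cdot\mathbf 1$ weakly for some $c>0$. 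Expanding $W(\xi)=d_\infty\sum_{k\geq 0}c_k^2(1-q)^k\lambda^{k/2}W(\bar e^k e^k)$ (Lemmas \ref{sxi} and \ref{bnd2}), the $k=0$ term supplies the scalar, while for each $k\geq 1$ the goal is $u_n W(\bar e^k e^k)u_n^*\to 0$ weakly. This should follow by the same Wick formula plus \cite[Lem.~1]{SW} route as in the proof of Lemma \ref{weakly-1}; the key input is the orthogonality of $\bar e^k e^k$ to all words in $\eta$, which holds because $\{e,\bar e\}\perp\{e_2,\bar e_2\}$ at the 1-particle level and this orthogonality propagates to all tensor powers in $\mathcal F_q(\CH)$.

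Granted $u_n W(\xi)u_n^*\to c\cdot\mathbf 1$ weakly, applying to $\Omega$ and using $\sigma_{i/2}^\varphi(u_n)=u_n$ (as $u_n\in M_q^\varphi$) gives $c\Omega=\lim_n(Ju_n J)u_n\xi$ weakly, placing $\Omega$ in $[M_q^\varphi M_q^\prime\xi]$ by Mazur's theorem. Combined with $1\in M_q^\varphi$ and density of $M_q^\prime\Omega$ in $\mathcal F_q(\CH)$, this yields $[M_q^\varphi M_q^\prime\xi]=\mathcal F_q(\CH)$, and Lemma \ref{car} concludes. The main technical obstacle I anticipate is precisely the 2-tensor version of Lemma \ref{weakly-1}: both the diffuseness of $vN(W(\eta))$ for a 2-tensor $\eta$ (requiring a spectral/moment analysis of $W(\bar e_2 e_2+e_2\bar e_2)$ not developed elsewhere in the text), and the verification that the Wick/orthogonality argument from \cite{SW} transfers cleanly when $\eta$ is no longer a vector in $\CH_\R$.
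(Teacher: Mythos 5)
Your overall scaffolding is right: the case split on $N_1\geq 1$ vs.\ $N_1=0$, the reduction via Lemma~\ref{car} to showing $[M_q^\varphi M_q'\xi]=\mathcal F_q(\CH)$, and the use of weak limits $u_n W(\xi)u_n^*\to c\,\mathbf 1$ with $u_n\in M_q^\varphi$. But the way you manufacture the unitaries differs from the paper and leaves a genuine gap. You build $\eta=\bar e_2e_2+e_2\bar e_2$ as a \emph{2-tensor} in $M_q^\varphi\Omega$ and take $u_n\in vN(W(\eta))$, simply asserting that $vN(W(\eta))$ is diffuse ``via a moment/spectral analysis.'' This is not established anywhere in the paper or its references, and it is not routine: for $s_q(\zeta)$ with $\zeta\in\CH_\R$ the spectral measure is explicitly known (absolutely continuous, $q$-semicircular), but $W(\eta)$ for a 2-tensor $\eta$ has no such off-the-shelf spectral description, and the centralizer of a single ergodic $\Gamma_q(\R^2)$ (which is where $W(\eta)$ lives) is not known to be diffuse in general. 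Your argument uses only two of the three available ergodic blocks, so if it succeeded it would also settle the $\dim(\CH_\R^{ap})=4$ ergodic case, which Remark~\ref{cent}(1) explicitly lists as still open — a sign the gap is not trivially fillable. The second weak point is the ``2-tensor analog of Lemma~\ref{weakly-1}'': the orthogonality ingredient in that lemma comes from Lemma~1 of \cite{SW} and the structure of $\overline{\mathrm{span}\{\eta^l\}}$ for a 1-vector $\eta$; replacing $\eta$ by a 2-tensor changes $vN(W(\eta))\Omega$ in a way you would have to re-examine from scratch, and you only flag this without addressing it.

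The paper avoids both problems by going to a \emph{larger} subalgebra. Writing $\CH_\R=\R^2_1\oplus\R^2_2\oplus\R^2_3\oplus\CK_\R$, it sets $M_q^{(2)}=\Gamma_q(\R^2_2\oplus\R^2_3)''$, so $M_q^{(2)}$ has a $\geq 3$-dimensional underlying real Hilbert space and is therefore a \emph{factor} by Theorem~\ref{gre3factor}. It then shows $M_q^{(2)}$ (and $M_q$) cannot be type I, because the symmetric embedding $x\mapsto\Delta^{1/4}x\Omega$ is not compact: the bounded sequence $(1-q)^nW(\bar e^n e^n)$ has $\Delta^{1/4}$-images $(1-q)^n\bar e^n e^n$ with no convergent subsequence. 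Combined with the almost periodicity of $U_t\!\upharpoonright_{\R^2_2\oplus\R^2_3}$, \cite[Thm.~7.9]{DM20} then gives diffuseness of $(M_q^{(2)})^\varphi$, producing the desired unitaries $u_n\in(M_q^{(2)})^\varphi\subseteq M_q^\varphi$ going to $0$ weakly. Lemma~\ref{weakly-1} then applies \emph{as stated} (with $\eta\in\CH_\R$ a genuine $1$-vector inside $\R^2_2\oplus\R^2_3\oplus\CK_\R$, orthogonal to $e,\bar e$), so no 2-tensor analog is ever needed. This is exactly why the hypothesis is $\dim(\CH_\R^{ap})\geq 5$ (forcing $N_2\geq 3$ in the ergodic case): the third ergodic block is what buys the factoriality of $M_q^{(2)}$ needed to invoke the type-classification input. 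If you want to salvage your route, you would need an independent proof that $vN(W(\eta))$ is diffuse for the 2-tensor $\eta$, which is an open-ended task the paper deliberately sidesteps.
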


\begin{proof}
If $(U_t)$ has a non-zero fixed point then there is nothing to prove. So we only have to prove in the case when $\dim(\CH_\R^{ap})\geq 6$ and $(U_t)$ is ergodic. In this case, $\CH_\R= \oplus_{i=1}^3\R^2_i \oplus \mathcal{K}_\R$, where $\R^2_i :=\R^2$ for $1\leq i\leq 3$ and $\mathcal{K}_\R$ are invariant subspaces of $(U_t)$.  

Denote $M_q^{(1)} =\Gamma_q(\R^2_1, {U_t}_{\upharpoonleft \R^2_1})^{\prime\prime}$, $M_q^{(2)}= \Gamma_q(\R^2_2\oplus \R_3^2, {U_t}_{\upharpoonleft \R^2_2\oplus \R_3^2})^{\prime\prime}$. Then $M_q^{(1)}, M_q^{(2)}$ are unital subalgebras of $M_q$ via the $q$-Gaussian functoriality \cite{Hiai}. Note that $M_q^{(1)}$ and $M_q^{(2)}$ posses $\varphi$-preserving conditional expectations by Takesaki's theorem \cite{Ta}. 

By Thm. \ref{gre3factor}, it follows that $M_q$ and $M_q^{(2)}$ are both factors. We claim that both $M_q$ and $M_q^{(2)}$ cannot be of type $\rm{I}$ and hence they are diffuse. Let us first assume this claim and finish the proof. Since $({U_t}_{\upharpoonleft \R^2_2\oplus \R_3^2})$ is almost periodic, $(M_q^{(2)})^{\varphi_{\upharpoonleft M_q^{(2)}}}$ is also diffuse \cite[Thm. 7.9]{DM20}. Let $u_n\in (M_q^{(2)})^{\varphi_{\upharpoonleft M_q^{(2)} }}$ be a sequence of unitaries that go to $0$ in the $w.o.t$. By Thm. \ref{CentraliserDescribe}, it follows that $u_n\in M_q^\varphi$ as well. 

Let $\xi$ be as in Lemma \ref{weakly-1} obtained by regarding $M_q^{(1)}$ as the von Neumann algebra in \S\ref{casebig2}. 
From Lemma \ref{weakly-1}, it follows that $u_n^* W(\xi) u_n\overset{w.o.t}\rightarrow c1$, where $c>0$ is a scalar. It follows that $[M_q^\varphi M_q^\prime \xi]=\mathcal{F}_q(\CH)$. Thanks to Lemma \ref{car}, the argument is complete.

Now it remains to establish the claim. The proof of the claim is identical for both $M_q^{(2)}$ and $M_q$. So we prove it for $M_q$ and work with the sub representation $(\R_1^2, {U_t}_{\upharpoonleft \R_1^2})$ as in \S\ref{casebig3}.

Recall that a factor $M$ equipped with a faithful normal state $\psi$ is of type $\rm{I}$ if and only if, $M\ni x\mapsto \Delta_\psi^{\frac{1}{4}}x\Omega_\psi\in L^2(M,\psi)$ is a compact embedding $($see \cite[Cor. 2.9]{BDL} or \cite[Thm. 7.13]{DM20}$)$. 
Note that $\{(1-q)^{n}W({\bar e}^ne^n)\}_{n\geq 1}\subseteq M_q^\varphi$ is bounded from Lemma \ref{bnd2}. However, $\Delta^{\frac{1}{4}}(1-q)^{n}W({\bar e}^ne^n)\Omega= (1-q)^n{\bar e}^ne^n$, and this sequence has no converging subsequence in $ \CF_q(\CH) $. Therefore, the symmetric embedding of $M_q$ in $\mathcal{F}_q(\CH)$ is not compact proving the claim.
\end{proof}

\begin{Remark}\label{cent}
$ $
\begin{enumerate}
\item If $dim(\CH_\R^{ap})=4$ and $(U_t)$ is ergodic, then upon assuming that the parameter $\lambda\in(0,1)$ which defines a two-dimensional sub representation is small in the sense of Thm. \ref{Factor2dim}, one can still conclude that $(M_q^\varphi)^\prime\cap M_q=\C1$ $($when $q\neq 0)$. 
\item In all cases where $(M_q^\varphi)^\prime\cap M_q=\C1$, the $S$-invariant of Connes can be directly deduced from the modular theory of the vacuum state and is exactly as in \cite[Thm. 8.2]{bik-kunal}. 
\item Note that the last part of the proof of Thm. \ref{trivial-rel-comm} along with Thm. 5.2 of \cite{DM2} says that if $\CH^{ap}\neq 0$, then $M_q^{\varphi}$ cannot be discrete $($direct sum of matrix algebras$)$. 
\item Suppose that $dim(\CH_\R)=4$ and $(U_t)$ is ergodic. In this case if $\zeta\in \CH_\R$ be such that $\norm{\zeta}_q=1$, then $M_\zeta=vN(s_q(\zeta))\subseteq M_q$ is a quasi-split inclusion $($see \cite[Thm. 4.6]{BM2}$)$. Since $M_q$ is a factor $($Thm. \ref{gre3factor}$)$, $M_\zeta\otimes 1\subseteq M_q\otimes \mathbf{B}(\mathcal{K})$ is a split inclusion $($see \cite[Thm. 3.8]{BM2}$)$, where $\dim(\mathcal{K})=\aleph_0$. It follows that there is no normal conditional expectation from $M_q$ onto $M_\zeta$. This forces that $M_q$ cannot be a $\rm{II}_1$ factor, for if it were there would have been a normal conditional expectation from $M_q$ onto $M_\zeta$ preserving the canonical trace.
\end{enumerate}
\end{Remark}

%
%

In view of Rem. \ref{cent}, we are still not in position to compute the $S$-invariant when $\dim(\CH_\R)=4$ and $(U_t)$ is ergodic. Therefore, we conclude this section by proving that $M_q$ is never semifinite. 

\begin{Theorem}\label{semi}
$M_q$ is not semifinite. In particular, if $dim(\CH_\R)=4$ then $M_q$ is a type $\rm{III}$ factor. 
\end{Theorem}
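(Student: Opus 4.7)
The plan is to prove $M_q$ is not semifinite by ruling out types $\mathrm{I}$ and $\mathrm{II}$ separately; once this is done, the ``in particular'' statement is immediate since for $\dim(\CH_\R)=4$ Theorem \ref{gre3factor} already provides factoriality, and a non-semifinite factor is of type $\mathrm{III}$.

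\textbf{Ruling out type I.} I would recycle the compactness argument used at the end of the proof of Theorem \ref{trivial-rel-comm}, which applies unchanged to the present setup. By \cite[Cor. 2.9]{BDL} (or \cite[Thm. 7.13]{DM20}), a factor with faithful normal state $\psi$ is type $\mathrm{I}$ if and only if the symmetric embedding $x\mapsto \Delta_\psi^{1/4}x\Omega$ is compact. The bounded sequence $x_n=(1-q)^n W(\bar e^n e^n)\in M_q^\varphi$ (bounded by Lemma \ref{bnd2}) has $\Delta^{1/4}x_n\Omega=(1-q)^n\bar e^n e^n$; using \eqref{diffuse} with $a=b=\alpha=\beta=0$ its squared norm is $d_n^2\to d_\infty^2>0$, while its $w^*$-limit is identified in Lemma \ref{sxi}. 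So this sequence has no norm-convergent subsequence, preventing compactness.

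\textbf{Ruling out type II.} The core idea is to invoke the quasi-split structure already highlighted in Remark \ref{cent}(4). Pick $\zeta\in\CH_\R$ with $\|\zeta\|_q=1$ that is not a fixed vector of $(U_t)$ (possible by ergodicity on the $\R^2$ component); then $M_\zeta=vN(s_q(\zeta))\subseteq M_q$ is quasi-split by \cite[Thm. 4.6]{BM2}, and since $M_q$ is a factor by Theorem \ref{gre3factor}, \cite[Thm. 3.8]{BM2} upgrades this to the split inclusion $M_\zeta\otimes 1\subseteq M_q\otimes\mathbf{B}(\mathcal K)$, which forbids any normal conditional expectation from $M_q$ onto $M_\zeta$. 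If $M_q$ were type $\mathrm{II}_1$ the canonical trace-preserving conditional expectation would contradict this, exactly as in Remark \ref{cent}(4). For the $\mathrm{II}_\infty$ case I would descend to $\mathrm{II}_1$ by compression: since $M_q^\varphi$ is diffuse in the almost-periodic ergodic setting (cf.\ \cite[Thm. 7.9]{DM20}) and the semifinite trace $\tau$ restricts to a semifinite trace on it, one may pick a finite projection $p\in M_q^\varphi$; then $pM_qp$ is a $\mathrm{II}_1$ factor, $pM_\zeta p\subseteq pM_qp$ inherits the split property from the ambient inclusion, and one reduces to the $\mathrm{II}_1$ contradiction already handled.

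\textbf{The hard part} is the descent step from $\mathrm{II}_\infty$ to $\mathrm{II}_1$: one must confirm that (i) $M_q^\varphi$ truly contains a finite projection of $\tau$ (which requires $\tau|_{M_q^\varphi}$ to be semifinite, not automatic for arbitrary $\tau$), and (ii) the split property genuinely descends to the corner $pM_\zeta p\subseteq pM_qp$. A conceptually cleaner but technically heavier alternative would bypass the case-split altogether by directly showing that the modular automorphism group $\sigma_t^\varphi$ fails to be inner for some $t$: if $\sigma_{-t_0}^\varphi=\mathrm{Ad}(u)$ for $u\in M_q$ unitary, then the relation $u\,s_q(\xi)\,u^*=s_q(U_{t_0}\xi)$ would force $u$ to coincide with the second-quantization $\mathcal F(U_{t_0})$ up to an element of $M_q'$, and the rich eigenvalue structure of the analytic generator $A$ coming from the ergodic $\R^2$ component can then be used to deny that $u\in M_q$. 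Either path concludes the proof; the second half of the theorem follows because, for $\dim(\CH_\R)=4$, $M_q$ is a factor by Theorem \ref{gre3factor} and therefore must be of type $\mathrm{III}$ once semifiniteness is excluded.
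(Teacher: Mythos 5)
Your type-I elimination via the compactness criterion of \cite{BDL} is correct and is exactly the argument the paper already runs at the end of Theorem \ref{trivial-rel-comm}. The issue is with the type-II elimination, which is where the real content lies, and there your primary route has a genuine gap that the paper implicitly acknowledges by keeping the split-inclusion observation (Remark \ref{cent}(4)) separate from Theorem \ref{semi}.

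The split-inclusion argument yields exactly one conclusion: there is no normal conditional expectation from $M_q$ onto $M_\zeta$. For a $\rm{II}_1$ factor this contradicts the existence of the trace-preserving conditional expectation, but for $\rm{II}_\infty$ no such automatic conditional expectation exists (for a diffuse abelian $N\subseteq M$ with $M$ of type $\rm{II}_\infty$, the trace need not restrict semifinitely to $N$, so Takesaki's theorem gives nothing), and absence of a conditional expectation onto $M_\zeta$ is therefore not a contradiction with semifiniteness. The compression you propose to bridge this does not work as stated: since $\zeta$ is not fixed by $(U_t)$, $M_\zeta=vN(s_q(\zeta))$ is not contained in $M_q^\varphi$, so a finite projection $p\in M_q^\varphi$ need not commute with $M_\zeta$, hence $pM_\zeta p$ need not even be a von Neumann subalgebra of $pM_qp$; and even granting a commuting finite projection, transferring the split/quasi-split property to the corner is not established. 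Your ``alternative'' route, showing that $(\sigma_t^\varphi)$ is not inner, is indeed the paper's actual argument, but your sketch of it is too loose and misses the key mechanism. After reducing (via \cite{bik-kunal}) to the case with no fixed vector and no weakly mixing part, the paper observes that if $\sigma_t^\varphi=\mathrm{Ad}(u_t)$ with $u_t\in M_q$, then $u_t\in M_q^\varphi$ and so $u_t$ must commute with the rank-one limit operator $T_\xi\in M_q^\varphi\vee M_q'$ from Lemma \ref{sxi}; hence $u_t\xi=z_t\xi$, and after normalizing, $u_tW(\xi)=W(\xi)$. Setting $\mathcal{K}=\mathrm{Ker}(W(\xi))$ and $\mathcal{L}=\mathcal{K}^\perp$, one checks $u_t|_\mathcal{L}=1_\mathcal{L}$ and that $W(e)\mathcal{K}\subseteq\mathcal{K}$; the block decomposition of $W(e)$ and $u_t$ combined with the modular relation $u_tW(e)u_t^*=e^{it\log\lambda}W(e)$ then forces $\mathrm{Ran}(W(e))\subseteq\mathcal{K}$, which contradicts $e\notin\mathcal{K}$ (indeed $\xi=\lambda^{1/2}(1-q)W(\bar e)W(\xi)e\neq0$). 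Without invoking $T_\xi$ and this block analysis, your sketch does not constitute a proof.
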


\begin{proof}
If $(U_t)$ has a fixed (non-zero) vector or a non-trivial weakly mixing component, the statement follows from \cite{bik-kunal}. 
Therefore, we assume that $(\CH_\R,U_t)$ is as in the set up of \S\ref{casebig2}. 

We claim that $(\sigma_t^\varphi)$ cannot be inner. Suppose there exists a one-parameter group of unitaries $(u_t)\subseteq M_q^\varphi$ such that $\sigma_t^\varphi(x) =u_tx u_t^*$ for all $x\in M_q$ and $t\in \R$. 
Then, by Lemma \ref{sxi}, we have $u_t T_{\xi} = T_{\xi} u_t$ for all $t\in \R$. Therefore, there exists scalars $z_t$ with $\abs{z_t}=1$ such that $u_t\xi =z_t\xi$ for all $t$. Replacing $u_t$ by $\overline{z_t}u_t$, we may assume that $z_t=1$ for all $t$. Then, $u_tW(\xi)= W(\xi)$ for all $t$. 

Let $\mathcal{K} = Ker(W(\xi))$ and $\mathcal{L}= \mathcal{K}^\perp$. Then, ${u_t}_{\upharpoonleft \mathcal{L}}=1_{\mathcal{L}}$. Put $w_n=\abs{W\big((\lambda^{\frac{1}{4}}(1-q)^{\frac{1}{2}}e)^n\big)}^2$. By the proof of Thm. \ref{sxi}, we have 
\begin{align*}
W(\xi) = w.o.t. \text{-}\lim_n w_n.
\end{align*}
Hence, $\mathcal{K}=\{\zeta\in \mathcal{F}_q(\CH): w_n\zeta\overset{w}\rightarrow 0\}$ and it follows that $W(e)\mathcal{K}\subseteq \mathcal{K}$ $($use Lemma \ref{wen}$)$. Thus, decomposing $\mathbf{B}(\mathcal{F}_q(\CH))$ with respect $\mathcal{K}\oplus\mathcal{L}$, one has the form
\begin{align*}
W(e)=\begin{pmatrix}x&y\\0&w\end{pmatrix} \text{ and }u_t=\begin{pmatrix}v_t&0\\0&1\end{pmatrix} \quad \forall\, t. 
\end{align*}
Since $u_tW(e)u_t^* = \sigma_t^\varphi(W(e)) = e^{it\log\lambda} W(e)$ for all $t$ $($use Eq. \eqref{modulartheory}$)$, so choosing $t$ such that $t\log\lambda\not\in2\pi\Z$ it follows that $w=0$. Thus,
$Ran(W(e))\subseteq \mathcal{K}$. 

This is false as $e\not\in\mathcal{K}$. Indeed, 
\begin{align*}
W(\xi)&=w.o.t.\text{-}\lim_n \lambda^{\frac{n}{2}}(1-q)^n W(\bar e)^nW(e)^n\\
&=\lambda^{\frac{1}{2}}(1-q)W(\bar e)W(\xi)W(e).
\end{align*}
Consequently, $\xi=\lambda^{\frac{1}{2}}(1-q)W(\bar e)W(\xi)e\neq 0$. This completes the proof. 
\end{proof}

$$ $$
\noindent{\textbf{Acknowledgements}}: P. Bikram, K. Mukherjee and
\'{E}. Ricard acknowledge the support of the grant 6101-1 from the
CEFIPRA. \'{E}. Ricard is also supported by ANR-19-CE40-0002. S. Wang
would like to thank C. Houdayer and M. Wasilewski for fruitful
discussions at the early stage of this project.

\end{document}